\numberwithin{equation}{section}
\newtheorem{thm}{Theorem}[section]
\newtheorem{lemma}[thm]{Lemma}
\newtheorem{cor}[thm]{Corollary}
\newtheorem{defn}[thm]{Definition}
\newtheorem{remark}[thm]{Remark}
\newtheorem{conjecture}[thm]{Conjecture}
\renewcommand{\epsilon}{\varepsilon}
\def\<#1{\langle #1\rangle}
\begin{document}{\allowdisplaybreaks[4]}


\title{Multiple chordal SLE(0) and classical Calogero-Moser system}
\author{
    Jiaxin Zhang
    \footnotemark[1]
   }
\renewcommand{\thefootnote}{\fnsymbol{footnote}}

\footnotetext[1]{{\bf zhangjx@caltech.edu} Department of Mathematics, California Institute of Technology}

\maketitle

\begin{abstract}
We develop a general theory of multiple chordal $\mathrm{SLE}(0)$ systems of type $(n, m)$ for positive integers $n$ and $m$ with $m \leq \lfloor n/2 \rfloor$, extending the construction of~\cite{ABKM20} beyond the previously studied case $n = 2m$.

By applying integrals of motion for the Loewner dynamics, we show that, in the $\mathbb{H}$-uniformization with the marked point $q = \infty$, the traces of type $(n, m)$ multiple chordal $\mathrm{SLE}(0)$ systems correspond to the real locus of real rational functions with $n$ real simple critical points, $m$ simple poles, and a pole of order $n - 2m + 1$ at $\infty$.

The stationary relations connect the classification of multiple radial SLE(0) systems to the enumeration of critical points of the master function of rational Knizhnik-Zamolodchikov (KZ) equations. 

Furthermore, we demonstrate that, under a common capacity parametrization, the Loewner dynamics evolve according to the classical Calogero-Moser Hamiltonian.

\end{abstract}

\tableofcontents

\newpage

\section{Introduction}
\subsection{Background}
\indent 

The Schramm--Loewner evolution $\mathrm{SLE}(\kappa)$ is a one-parameter family of conformally invariant random curves that arise as scaling limits of interfaces in two-dimensional critical lattice models~\cite{Sch00}. For $\kappa > 0$, these processes are uniquely characterized by the domain Markov property and conformal invariance, and are closely related to conformal field theories (CFTs) with central charge
\[
c(\kappa) = \frac{(3\kappa - 8)(6 - \kappa)}{2\kappa}.
\]
The relationship between $\mathrm{SLE}(\kappa)$ and CFT, often referred to as the SLE/CFT correspondence~\cite{Car96, FK04, BB03a, FW03, Dub15a, Pel19}, provides a framework for constructing martingale observables and partition functions using conformal field theory methods.

Multiple $\mathrm{SLE}$ systems describe families of non-intersecting random curves with prescribed pairwise connections among boundary points. The best-understood case is the type $(2n, n)$ chordal configuration, where $2n$ boundary points are joined by $n$ disjoint curves in the upper half-plane. This setting has received considerable attention in the study of multiple $\mathrm{SLE}(\kappa)$ systems and their deterministic limit as $\kappa \to 0$, as shown in~\cite{Dub06, KL07, FK15a, PW19}. These foundational results have paved the way for various extensions and generalizations.

In this paper, we generalize the construction of multiple chordal $\mathrm{SLE}(0)$ systems beyond the $(2n, n)$ case by introducing configurations of type $(n, m)$. Such a configuration consists of $n$ ordered boundary points, together with a marked point $u \in \partial\Omega$, where $m$ disjoint, non-crossing connections are formed among the $n$ points, and the remaining $n - 2m$ points are connected to $u$. These generalized link patterns provide a natural setting for the extension of deterministic multiple SLE theory, and lead to new analytic and geometric structures, including connections with rational functions and classical integrable systems.

\subsection{Multiple chordal SLE($0$) systems}
\indent

For $\kappa = 0$, we derive the commutation relations for multiple chordal SLE systems. Unlike the case $\kappa > 0$, these relations do not imply the existence of a partition function $\psi$. See Section~\ref{commutation when kappa=0} for details. This distinction shows that not all chordal SLE(0) systems can be quantized or viewed as classical limits of random multiple SLE($\kappa$) processes.

We introduce a class of multiple chordal SLE(0) systems defined by stationary relations, which emerge from analyzing the $\kappa \to 0$ limit of multiple SLE($\kappa$) partition functions. Extending the methods of~\cite{ABKM20}, we develop a theory of multiple chordal SLE(0) systems of general type $(n,m)$.

These systems are characterized by configurations of growth points $\boldsymbol{x} = \{x_1, x_2, \ldots, x_n\}$ and a set of marked points $\boldsymbol{\xi} = \{\xi_1, \xi_2, \ldots, \xi_m\}$ known as screening charges, which satisfy a system of stationary relations. The stationary relations are equivalent to the critical point equations for a master function $\Phi(\boldsymbol{x}, \boldsymbol{\xi})$, which is also known as the master function for the rational Knizhnik--Zamolodchikov equation.

\begin{defn}[Stationary relations]
\label{Stationary relations}
Let $\boldsymbol{x}$ and $\boldsymbol{\xi}$ defined as above, the stationary relations between growth points and screening charges are given by
\begin{equation}
-\sum_{j=1}^{n} \frac{2}{\xi_k-x_j}+\sum_{l \neq k} \frac{4}{\xi_k-\xi_l}+ \frac{2n-4m+4}{\xi_k-u}=0, k =1,2,\ldots,n
\end{equation}

when $u=\infty$, the stationary relations are simplified as 
\begin{equation}
-\sum_{j=1}^{n} \frac{1}{\xi_k-x_j}+\sum_{l \neq k} \frac{2}{\xi_k-\xi_l}=0, k =1,2,\ldots,n
\end{equation}

\end{defn}

Based on the stationary relations, we formulate the definition of the multiple chordal $\mathrm{SLE}(0)$ Loewner chain.

\begin{defn}[Multiple chordal SLE(0) Loewner chain] \label{multiple chordal SLE(0) Loewner chain via stationary relations}
Given growth points $\boldsymbol{x}=\{x_1,x_2,\ldots,x_n\}$ on the unit circle, and a marked interior point $u=0$, and screening charges $\{\xi_1,\xi_2,\ldots,\xi_m\}$ involution symmetric and solving the \textbf{stationary relations}. 

Let $\boldsymbol{\nu}=\left(\nu_1, \ldots, \nu_n\right)$ be a set of parametrization of the capacity, where each $\nu_i:[0, \infty) \rightarrow[0, \infty)$ is assumed to be measurable. 

In the upper half plane $\Omega=\mathbb{H}, u = \infty$. We define the multiple chordal SLE(0) 
Loewner chain as a random normalized conformal map $g_t=g_t(z)$, with $g_{0}(z)=z$ whose evolution is described by the Loewner differential equation:
\begin{equation}
\partial_t g_t(z)=\sum_{j=1}^n \frac{2\nu_j(t)}{g_t(z)-x_j(t)}, \quad g_0(z)=z,
\end{equation}
and the driving functions $x_j(t), j=1, \ldots, n$, evolve as

\begin{equation}\label{multiple SLE 0 driving}
\dot{x}_j= \nu_j(t) \frac{\partial {\rm log} \mathcal{Z}(\boldsymbol{x},\boldsymbol{\xi})}{\partial x_j}+\sum_{k \neq j} \frac{2\nu_k(t)}{x_j-x_k}
\end{equation}
where
\begin{equation}
    \mathcal{Z}(\boldsymbol{x}, \boldsymbol{\xi})= \prod_{1 \leq i<j \leq n}(x_i-x_j)^{2} \prod_{1 \leq i<j \leq m}(\xi_i-\xi_j)^{8} \prod_{i=1}^{n} \prod_{j=1}^m\left(x_i-\xi_j\right)^{-4}
\end{equation}
The logarithmic derivative of $\mathcal{Z}(\boldsymbol{x}, \boldsymbol{\xi})$ with respect to $x_j$ (treating $x$ and $\xi$ as independent variables) is given by:
\begin{equation}
   \frac{\partial \mathcal{Z}(\boldsymbol{x},\boldsymbol{\xi})}{\partial x_j}=\sum_{k\neq j} \frac{2}{x_j-x_k}-2\sum_{l} \frac{2}{x_j-\xi_l}
\end{equation}
for $j=1,\ldots,n$,
The flow map $g_t$ is well-defined up to the first time $\tau$ at which $x_j(t)=x_k(t)$ for some $1 \leq j<$ $k \leq n$. For each $z \in \mathbb{C}$, the process $t \mapsto g_t(z)$ is well-defined up to the time $\tau_z \wedge \tau$, where $\tau_z$ is the first time at which $g_t(z)=z_j(t)$. The hull associated with this Loewner chain is denoted by

$$
K_t=\left\{z \in \overline{\mathbb{H}}: \tau_z \leq t\right\}
$$

\end{defn}

In order to characterize the traces of the multiple chordal SLE(0) systems, we introduce a 
class of rational functions $\mathcal{CR}_{m,n}(\boldsymbol{x})$ with prescribed zeros $\boldsymbol{x}=\{x_1,x_2,\ldots,x_n\}$. 

\begin{defn}
Let $\boldsymbol{x} = \{x_1, x_2, \ldots, x_n\}$ be distinct points on the unit circle. Define $\mathcal{CR}_{m,n}(\boldsymbol{x})$ as the class of real rational functions \( R(z) \) satisfying the following properties:

\begin{enumerate}
    \item  
    The rational function \( R(z) \) is symmetric under conjugation, i.e.,  
    \[
    \overline{R\left(z^*\right)} = R(z).
    \]

    \item  
    The points \( \{x_1, x_2, \ldots, x_n\} \) are the distinct simple critical points of \( R(z) \).

    \item   
    The rational function \( R(z) \) has distinct simple poles at \( \{\xi_1, \xi_2, \ldots, \xi_m\} \).

    \item   
    The rational function \( R(z) \) has a pole of order \( n - 2m+ 1 \) at infinity, ensuring the total difference between the number of zeros and poles is \( 0 \).
\end{enumerate}
\end{defn}

\begin{thm} \label{stationary relations rational functions}
 The following are equivalent:
\begin{itemize}
    \item[\rm(1)]$\boldsymbol{\xi}$ are conjugation symmetric 
 and $\boldsymbol{x}$ on the real line satisfying the \textbf{stationary relations}  
    \item[\rm(2)]There exists
$R(z)\in \mathcal{CR}_{m,n}(\boldsymbol{z})$ with  $\boldsymbol{\xi}$ as poles and $\boldsymbol{x}$ as critical points.

\end{itemize}

\end{thm}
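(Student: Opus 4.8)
The plan is to recognize the stationary relations as exactly the single-valuedness (residue-free) condition for the antiderivative of the rational differential $R'(z)\,dz$, and to reduce membership in $\mathcal{CR}_{m,n}(\boldsymbol x)$ to an explicit normal form for $R'$. First I would establish this normal form. Suppose $R\in\mathcal{CR}_{m,n}(\boldsymbol x)$ has poles $\boldsymbol\xi$ and critical points $\boldsymbol x$. At each simple pole $\xi_l$ the function $R'$ has a double pole; at $\infty$, since $R\sim b\,z^{\,n-2m+1}$, it has a pole of order $n-2m$; and its finite zeros are precisely the simple critical points $x_1,\dots,x_n$. Writing $R'(z)=N(z)\big/\prod_{l=1}^m(z-\xi_l)^2$, the pole order at $\infty$ forces $\deg N=n$, so $N(z)=a\prod_{j=1}^n(z-x_j)$ and
\[
R'(z)=a\,\frac{\prod_{j=1}^n(z-x_j)}{\prod_{l=1}^m(z-\xi_l)^2},
\]
with $a\in\R\setminus\{0\}$ real because $\overline{R(z^*)}=R(z)$, the $x_j$ are real, and $\boldsymbol\xi$ is conjugation symmetric.

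For the implication $(2)\Rightarrow(1)$, I would use that the residue of any derivative vanishes: the coefficient of $(z-\xi_k)^{-1}$ in the Laurent expansion of $R'$ at $\xi_k$ is zero for every $k$. Computing this residue from the normal form, with $g_k(z):=a\prod_{j}(z-x_j)\big/\prod_{l\neq k}(z-\xi_l)^2$, gives
\[
\mathrm{Res}_{z=\xi_k}R'(z)=g_k'(\xi_k)=g_k(\xi_k)\Bigl[\sum_{j=1}^n\frac{1}{\xi_k-x_j}-\sum_{l\neq k}\frac{2}{\xi_k-\xi_l}\Bigr].
\]
Since $g_k(\xi_k)\neq0$, the vanishing of this residue is exactly the stationary relation at $\xi_k$; conjugation symmetry of $\boldsymbol\xi$ is inherited from that of $R$ (poles come in conjugate pairs).

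For the converse $(1)\Rightarrow(2)$, I would reverse the construction: set $\rho(z):=\prod_{j}(z-x_j)\big/\prod_{l}(z-\xi_l)^2$, which already has real coefficients. Its partial-fraction expansion is $\rho=S(z)+\sum_l\bigl[A_l(z-\xi_l)^{-2}+B_l(z-\xi_l)^{-1}\bigr]$ with $S$ a polynomial of degree $n-2m$, and the stationary relations say precisely that every residue $B_l=\mathrm{Res}_{\xi_l}\rho$ vanishes. Hence $\rho$ integrates to a rational function
\[
R(z)=\tilde S(z)-\sum_{l=1}^m\frac{A_l}{z-\xi_l}+C,\qquad \deg\tilde S=n-2m+1,
\]
with no logarithmic terms. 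Because $A_l=\mathrm{Res}_{\xi_l}\bigl[(z-\xi_l)^2\rho\bigr]\neq0$ (no $\xi_l$ equals a critical point, as is implicit in the relations being well defined), $R$ has simple poles at the $\xi_l$ and a pole of order $n-2m+1$ at $\infty$; since $R'=\rho$, its finite critical points are exactly the simple points $x_1,\dots,x_n$; and choosing all constants real makes $R$ conjugation symmetric. Thus $R\in\mathcal{CR}_{m,n}(\boldsymbol x)$.

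The main obstacle is bookkeeping rather than depth: one must confirm that integrating $\rho$ produces the correct pole orders and no spurious logarithms. The pole at $\infty$ has order exactly $n-2m+1\ge1$ precisely because $m\le\lfloor n/2\rfloor$ (degenerating to a simple pole when $n=2m$), the finite poles survive as simple poles because $A_l\neq0$, and the absence of a logarithm at $\infty$ is automatic since the polynomial part $S$ carries no $z^{-1}$ term (equivalently $\sum_l B_l=0$ forces the residue at $\infty$ to vanish as well). The conceptual crux, which makes the whole equivalence transparent, is the identification of the stationary relations with the residue-free condition guaranteeing that $R'(z)\,dz$ is the differential of a genuine rational function.
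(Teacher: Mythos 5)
Your proof is correct and follows essentially the same route as the paper's (which reproduces Lemma 4.1 of \cite{ABKM20}): the factorization $R'(z)=a\prod_j(z-x_j)\big/\prod_l(z-\xi_l)^2$, the partial-fraction expansion, and the identification of the stationary relations with the vanishing of the residues $B_k=\bigl(\sum_j\frac{1}{\xi_k-x_j}-\sum_{l\neq k}\frac{2}{\xi_k-\xi_l}\bigr)A_k$, i.e.\ the existence of a rational primitive. Your write-up is, if anything, more careful than the paper's sketch about the pole order at infinity and the nonvanishing $A_l\neq 0$, but there is no substantive difference in method.
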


In contrast to the radial setting, where horizontal trajectories of quadratic differentials characterize the curves, we show that the traces of the chordal SLE(0) system are given by the real locus of certain rational functions $R(z)$ belonging to a space $\mathcal{CR}_{m,n}(\boldsymbol{x})$.

By constructing the field integral of motion for multiple chordal SLE(0) systems, we demonstrate that the real locus of $R(z)\in \mathcal{R}_{m,n}(\boldsymbol{x})$ starting at the growth points $\boldsymbol{x}$ are exactly the traces of the multiple chordal SLE(0) processes with growth points $\boldsymbol{x}$ and screening charges $\boldsymbol{\xi}$.

\begin{thm}\label{traces as real locus}
    Let $x=\{x_1,x_2,\ldots,x_{n}\}$ be distinct points on the real line, and screening charges $\xi =\{\xi_1,\xi_2,\ldots,\xi_{m}  \}\subset \mathbb{C}$ closed under conjugation and solve the stationary relations
\begin{equation}
-\sum_{j=1}^{n} \frac{1}{\xi_k-x_j}+\sum_{l \neq k} \frac{2}{\xi_k-\xi_l}=0, k =1,2,\ldots,m
\end{equation}

Then there exists an $R \in \mathcal{CR}_{m,n}(\boldsymbol{x})$ with $\boldsymbol{\xi}$ as finite poles and $\boldsymbol{x}$ as finite critical points. The hulls $K_t$ generated by multiple Loewner flows with parametrization $\boldsymbol{\nu}(t)$ are a subset of $\Gamma(R)$, up to any time $t$ before the collisions of any poles or critical points. Up to any such time
$$
(R(z)) \circ g_t^{-1} \in \mathcal{CR}_{m,n}(\boldsymbol{x}(t)) 
$$
where $\boldsymbol{x}(t)$ is the location of the critical points at time $t$ under the multiple chordal Loewner flow with parametrization $\boldsymbol{\nu}(t)$.
\end{thm}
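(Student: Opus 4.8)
The plan is to realize $R\circ g_t^{-1}$ as a \emph{field integral of motion}: I will show that transporting $R$ along the inverse Loewner flow keeps it inside $\mathcal{CR}_{m,n}$, with its critical points carried by the driving functions $\boldsymbol{x}(t)$ and its finite poles carried by the flowed screening charges $\xi_l(t)=g_t(\xi_l)$. The existence of $R$ asserted in the first sentence is immediate from Theorem~\ref{stationary relations rational functions} applied to the given stationary data, so the substance lies in the two dynamical claims, which I treat up to the first collision of critical points or poles (so that all zeros and poles stay simple).

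Writing $f_t=g_t^{-1}$, $R_t=R\circ f_t$, and $V(w)=\sum_{j=1}^n \frac{2\nu_j(t)}{w-x_j(t)}$ for the Loewner velocity field, I would first record the elementary consequences of the Loewner equation
\[
\partial_t f_t=-V\,f_t',\qquad \partial_t R_t=-V\,R_t',
\]
and, differentiating once in $w$, the transport equation $\partial_t R_t'=-\bigl(V R_t'\bigr)'$ for the derivative.

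The core step is to compare $R_t'$ with the explicit candidate
\begin{equation}
S_t(w)=C(t)\,\frac{\prod_{j=1}^n\bigl(w-x_j(t)\bigr)}{\prod_{l=1}^m\bigl(w-\xi_l(t)\bigr)^2},
\end{equation}
where the $x_j(t)$ solve the driving equation~\eqref{multiple SLE 0 driving} and $\xi_l(t)=g_t(\xi_l)$. I would verify that $S_t$ satisfies the \emph{same} transport equation $\partial_t S_t=-(V S_t)'$ with $S_0=R'$; since both $S_t$ and $R_t'$ are rational with poles confined to $\{x_j(t)\}\cup\{\xi_l(t)\}$, this is an identity of rational functions that I would check by matching principal parts and the value at infinity. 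Matching at $w=\xi_l(t)$ forces $\dot\xi_l=V(\xi_l(t))$, i.e.\ the Loewner flow of the screening charges, while the absence of a double pole there is exactly the stationary relation for $(\boldsymbol{x}(t),\boldsymbol{\xi}(t))$ — so the stationary relations propagate in $t$; matching at infinity forces $\dot C=0$. The delicate match is at the growth point $w=x_j(t)$, where $V$ has its own simple pole: a local expansion shows the $(w-x_j)^{-2}$ contributions cancel and the residue identity collapses to
\[
\dot x_j=\nu_j\Bigl(\sum_{k\neq j}\frac{2}{x_j-x_k}-\sum_{l}\frac{4}{x_j-\xi_l}\Bigr)+\sum_{k\neq j}\frac{2\nu_k}{x_j-x_k},
\]
which is precisely~\eqref{multiple SLE 0 driving}, the bracketed factor being $\partial_{x_j}\log\mathcal{Z}$. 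By uniqueness for this linear transport equation, $R_t'=S_t$; combined with a pole of order $n-2m+1$ at infinity (from $f_t(w)=w+O(1)$) and conjugation symmetry inherited from $R$ and the reality of the flow, this yields $R\circ g_t^{-1}\in\mathcal{CR}_{m,n}(\boldsymbol{x}(t))$.

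For the hull containment I would use conformal covariance of the real locus, $\Gamma(R_t)=g_t\bigl(\Gamma(R)\bigr)$, equivalently $\Gamma(R)=f_t\bigl(\Gamma(R_t)\bigr)$. Because $R_t$ is conjugation symmetric, $\mathbb{R}\subseteq\Gamma(R_t)$, and the boundary extension of $f_t$ maps $\mathbb{R}$ onto the outer boundary of $\mathbb{H}\setminus K_t$, which contains the traces; concretely each tip $\gamma_j(s)=f_s(x_j(s))$ satisfies $R(\gamma_j(s))=R_s(x_j(s))\in\mathbb{R}$, since $x_j(s)$ is a real critical point of the real rational function $R_s$, so $\gamma_j(s)\in\Gamma(R)$ for every $s\le t$ and hence $K_t\subseteq\Gamma(R)$. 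I expect the main obstacle to be the residue matching at the growth points $w=x_j(t)$: this is where the self-interaction pole of $V$ meets the simple zero of $R_t'$, where the spurious double pole must cancel, and where the precise driving force — in particular the partition-function term $\partial_{x_j}\log\mathcal{Z}$ — is forced rather than postulated. A secondary point requiring care is the boundary regularity of $f_t$ and $R_t$ at the growth points (the square-root behavior of the slit maps) needed to make the limits $\gamma_j(s)=f_s(x_j(s))$ and $R(\gamma_j(s))=R_s(x_j(s))$ rigorous.
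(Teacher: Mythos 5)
Your proposal is correct and is essentially the paper's own argument: your candidate $S_t$, pulled back along the flow as $S_t(g_t(z))\,g_t'(z)$, is exactly the paper's field integral of motion $N_t(z)$ of Theorem \ref{integral of motion in H}, your principal-part matching at $x_j(t)$, $\xi_l(t)$ and at infinity reproduces term-by-term the cancellation the paper performs on $\frac{d}{dt}\log N_t(z)$ (including the forced driving equation \eqref{multiple SLE 0 driving} at the growth points), and the tip argument $R(\gamma_j(s))=R_s(x_j(s))\in\mathbb{R}$ is the intended deduction of $K_t\subseteq\Gamma(R)$. One small correction: the matching at $\xi_l(t)$ forces only $\dot\xi_l=V(\xi_l)$ and holds for \emph{arbitrary} screening charges (cf.\ the remark after Theorem \ref{integral of motion in H}) — the stationary relations are not the absence of a double pole of $S_t$ there (the double pole is built into $S_t$), but rather the vanishing of the residue $B_l$ of $S_t$ in the sense of Theorem \ref{stationary relations rational functions}, which is what makes the primitive $R_t$ rational and which propagates in $t$ because $R_t=R\circ g_t^{-1}$ is single-valued meromorphic near $\xi_l(t)$, so its derivative can have no residue.
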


The key ingredient in the proof is to construct an integral of motion for the multiple chordal Loewner flows.
\begin{thm}\label{integral of motion in H}
Let $x_1,x_2,\ldots,x_n$ be distinct points and $u$ a marked point on the real line, for each $z \in \overline{\mathbb{H}}$
\begin{equation}
N_t(z)=(g_t(z)-u)^{2m-n-2}g'_{t}(z)\frac{\prod_{k=1}^{n}(g_t(x)-z_k(t))}{\prod_{j=1}^{m}(g_t(z)-\xi_j(t))^2}
\end{equation}
is an integral of motion on the interval $[0,\tau_t \wedge \tau)$ for the multiple chordal Loewner flow with parametrization $\nu_j(t)$.

If we set $u=\infty$, the field integral of motions degenerates to
\begin{equation}
N_t(z)=g'_{t}(z)\frac{\prod_{k=1}^{n}(g_t(z)-z_k(t))}{\prod_{j=1}^{m}(g_t(z)-\xi_j(t))^2}
\end{equation}
\end{thm}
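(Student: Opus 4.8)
The plan is to prove the identity by showing directly that $\frac{d}{dt}\log N_t(z)=0$ on the interval in question, so that $N_t(z)=N_0(z)$. I would first record the evolution equations that feed into the computation. Besides the Loewner equation $\partial_t g_t(z)=\sum_{i=1}^n \frac{2\nu_i(t)}{g_t(z)-x_i(t)}$, differentiating in $z$ gives $\partial_t g'_t(z)=-\sum_{i=1}^n \frac{2\nu_i(t)\,g'_t(z)}{(g_t(z)-x_i(t))^2}$, and the growth points obey the driving ODE \eqref{multiple SLE 0 driving}. Since the screening charges $\xi_j(t)=g_t(\xi_j)$ and the marked point $u(t)=g_t(u)$ are transported by the same flow, they satisfy the Loewner equation pointwise, i.e. $\dot\xi_j=\sum_i \frac{2\nu_i}{\xi_j-x_i}$ and $\dot u=\sum_i \frac{2\nu_i}{u-x_i}$; when $u=\infty$ this point is fixed and the factor $(g_t(z)-u)^{2m-n-2}$ drops out, recovering the degenerate form.

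Writing $w:=g_t(z)$, I would then compute $\partial_t \log N_t(z)$ as a rational function $F(w)$ of $w$ (with $t$-dependent coefficients), whose only possible poles are at $w=x_i$, $w=\xi_j$ and $w=u$. The strategy is to show $F\equiv 0$ by verifying that every pole cancels and that $F\to 0$ as $w\to\infty$, since a rational function with no poles that vanishes at infinity is identically zero. The double poles at each $x_i$ cancel automatically: the contribution of $\partial_t\log g'_t$ produces $-\frac{2\nu_i}{(w-x_i)^2}$, which is exactly cancelled by the double-pole part of $\partial_t\log(g_t-x_i)=\frac{\partial_t g_t-\dot x_i}{w-x_i}$ arising from the $i$-th term $\frac{2\nu_i}{w-x_i}$ of $\partial_t g_t$.

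It then remains to check the simple poles and the decay at infinity. Collecting the residue of $F$ at $w=x_i$ from the four groups of terms, I expect it to equal $\sum_{k\neq i}\frac{2\nu_k}{x_i-x_k}+\nu_i\big(\sum_{k\neq i}\frac{2}{x_i-x_k}-4\sum_j\frac{1}{x_i-\xi_j}+(2m-n-2)\frac{2}{x_i-u}\big)-\dot x_i$, which vanishes precisely because of the driving equation \eqref{multiple SLE 0 driving} together with the stated form of $\partial_{x_i}\log\mathcal{Z}$; this is exactly the step where the $\xi$-coupling encoded in $\mathcal{Z}$ and the exponent $2m-n-2$ of the $u$-factor are used. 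The residue of $F$ at each $\xi_j$ receives a contribution only from $-2\log(g_t-\xi_j)$ and equals $-2\big(\sum_i\frac{2\nu_i}{\xi_j-x_i}-\dot\xi_j\big)$, which vanishes by the transport equation for $\xi_j$; similarly the residue at $u$ vanishes by the transport equation for $u$. Finally, since $\partial_t g_t=\sum_i \frac{2\nu_i}{w-x_i}\to 0$ as $w\to\infty$, every term of $F$ is $O(1/w)$, so $F\to 0$; being pole-free it is a polynomial, hence $F\equiv 0$, giving $N_t(z)=N_0(z)$. The case $u=\infty$ follows verbatim with the $u$-terms omitted.

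I expect the main obstacle to be the bookkeeping of the simple-pole residue at $w=x_i$: one must expand each of $\partial_t\log(g_t-x_k)$, $-2\partial_t\log(g_t-\xi_j)$, and the $u$-factor to first subleading order near $w=x_i$, carefully separate the double-pole and simple-pole parts, and then recognize the resulting expression as $\dot x_i$ minus the right-hand side of the driving equation. The remaining pieces — the double-pole cancellation, the residues at $\xi_j$ and $u$, and the $O(1/w)$ decay — are comparatively mechanical once the transport equations are in hand.
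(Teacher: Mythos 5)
Your proof is correct, and it rests on the same basic mechanism as the paper's — differentiate $\log N_t$ and use the Loewner transport equations for $\xi_j$ and $u$ together with the driving ODE for the $x_j$ — but your verification of the cancellation takes a genuinely different route. The paper expands $\partial_t \log N_t$ into explicit logarithmic derivatives (the identity list \eqref{derivatives of terms}), proves the single-growth case $\nu_j=1$, $\nu_k=0$ for $k\neq j$ by direct algebraic cancellation, obtaining $N^j_t(z)=0$ as in \eqref{dNj t}, and then gets the general parametrization by linearity, $\partial_t \log N_t=\sum_j \nu_j(t) N^j_t(z)=0$. You instead regard $F(w)=\partial_t\log N_t$ as a rational function of $w=g_t(z)$ and run a Liouville-type argument: the double poles at $x_i$ cancel between $\partial_t\log g_t'$ and the $i$-th factor, the simple-pole residues at $x_i$, $\xi_j$, $u$ vanish exactly by the driving and transport equations, and $F=O(1/w)$ at infinity, so $F\equiv 0$. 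This buys two things the paper's computation does not make visible: it treats arbitrary $\nu_j(t)$ in a single pass with no reduction to single-curve generators, and it isolates precisely where each hypothesis enters (the residue at $x_i$ vanishes if and only if $\dot x_i$ has exactly the stated drift; the residues at $\xi_j$ and $u$ vanish if and only if those points are carried by the flow). One small citation slip to fix: equation \eqref{multiple SLE 0 driving} is the $u=\infty$ form of the driving ODE, whereas your residue at $x_i$ correctly carries the extra term $2(2m-n-2)\nu_i/(x_i-u)$; for finite $u$ the drift must include this contribution, which in the paper appears only in the evolution list \eqref{derivatives of terms} as the $(4m-2n-4)/(z_j-u)$ term, so you should invoke that version of the dynamics rather than \eqref{multiple SLE 0 driving}. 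With that reference corrected your argument is complete, and, like the paper's, it nowhere uses the stationary relations — consistent with the remark following the theorem that $N_t(z)$ is conserved for arbitrary initial positions of the screening charges.
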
 

\begin{remark}
    \( N_t(z) \) is a field integral of motion for arbitrary initial positions of screening charges \( \boldsymbol{\xi} \) even without assuming stationary relations. The stationary relations imply the existence of a quadratic differential \( R(z) \in \mathcal{CR}(\boldsymbol{z}) \), see Theorem \ref{traces as real locus}.
\end{remark}

\begin{remark}
    The integral of motion is motivated by a martingale observable in conformal field theory. For a field \( \mathcal{X} \) in the OPE family \( F_{\beta} \),
    \[
    \hat{\mathbf{E}}[\mathcal{X}] := \frac{\mathbf{E}[\mathcal{X} \mathcal{O}_{\beta}]}{\mathbf{E}[\mathcal{O}_{\beta}]}
    \]
    is a martingale observable where \( \mathcal{O}_{\beta} \) is a vertex field. In our situation, we choose \( \mathcal{X} \) to be the chiral vertex field and take the classical limit as \( \kappa \to 0 \). The martingale observable degenerates to the integral of motion. We will discuss the construction of the field \( \mathcal{X} \) in Section \ref{Multiple Chordal Martingale Observable}.
\end{remark}

Based on the previous theorems, we have already established the following classification result:

\begin{thm} 
The classification of the following three objects is equivalent:
\begin{itemize}
    \item The multiple chordal SLE(0) system with screening charges \( \boldsymbol{\xi} \) and growth points \( \boldsymbol{x} \) that solve the stationary relations.
    \item Rational functions \( \mathcal{R}(\boldsymbol{z}) \) with \( \boldsymbol{\xi} \) as poles and \( \boldsymbol{z} \) as critical points, whose real locus form a \( (n,m) \) chordal link pattern.
    \item The critical point of the master function \( \Phi(\boldsymbol{z},\boldsymbol{\xi}) \) (also known as the master function for the rational Knizhnik–Zamolodchikov equations), where
    \[
    \Phi(\boldsymbol{x}, \boldsymbol{\xi}) =  \prod_{1 \leq i < j \leq n} (x_i - x_j)^{2} \prod_{1 \leq i < j \leq m} (\xi_i - \xi_j)^{8} \prod_{i=1}^{n} \prod_{j=1}^{m} (x_i - \xi_j)^{-4} 
    \]
\end{itemize}  
\end{thm}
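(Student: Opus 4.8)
The plan is to read this as an assembly theorem: the substantive analytic and geometric content has already been packaged into the earlier results, so I would reduce the asserted three-way equivalence of classifications to the two pairwise equivalences (1)$\Leftrightarrow$(3) and (1)$\Leftrightarrow$(2), after which transitivity closes the triangle. The guiding principle is that all three objects are parametrized by the same data $(\boldsymbol{x},\boldsymbol{\xi})$, and the equivalences amount to showing that the defining constraints on this data coincide.

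First I would establish (1)$\Leftrightarrow$(3) by a direct differentiation, which is the only genuinely computational step. Writing
\[
\log \Phi(\boldsymbol{x},\boldsymbol{\xi}) = 2\sum_{i<j}\log(x_i-x_j) + 8\sum_{i<j}\log(\xi_i-\xi_j) - 4\sum_{i=1}^{n}\sum_{j=1}^{m}\log(x_i-\xi_j),
\]
the critical point equations in the screening variables are $\partial_{\xi_k}\log\Phi = 0$. Computing termwise gives
\[
\partial_{\xi_k}\log\Phi = 8\sum_{l\neq k}\frac{1}{\xi_k-\xi_l} - 4\sum_{j=1}^{n}\frac{1}{\xi_k-x_j},
\]
so that, after dividing by $4$, the equation $\partial_{\xi_k}\log\Phi=0$ is exactly the $u=\infty$ stationary relation of Definition~\ref{Stationary relations}. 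Since the multiple chordal SLE(0) system in object (1) is by definition the Loewner chain whose data solve these stationary relations, the identity map on $(\boldsymbol{x},\boldsymbol{\xi})$ furnishes the bijection between SLE(0) systems and critical points of the master function.

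Next I would establish (1)$\Leftrightarrow$(2) by invoking Theorem~\ref{stationary relations rational functions}, which is precisely the equivalence between $(\boldsymbol{x},\boldsymbol{\xi})$ solving the stationary relations (with $\boldsymbol{\xi}$ conjugation symmetric and $\boldsymbol{x}$ real) and the existence of a rational function $R\in\mathcal{CR}_{m,n}(\boldsymbol{x})$ having poles $\boldsymbol{\xi}$ and critical points $\boldsymbol{x}$. To upgrade this to a statement about the link pattern, I would appeal to Theorem~\ref{traces as real locus}: the integral of motion of Theorem~\ref{integral of motion in H} forces the hulls $K_t$ into the real locus $\Gamma(R)$ and shows that $R\circ g_t^{-1}$ remains in $\mathcal{CR}_{m,n}(\boldsymbol{x}(t))$ along the flow, so that the traces of the SLE(0) system coincide with the real-locus arcs of $R$ issuing from the growth points. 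Reading off the connectivity of these arcs then identifies object (1) with object (2).

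The hard part will be this final identification — confirming that $\Gamma(R)$ realizes a genuine $(n,m)$ chordal link pattern rather than merely passing through the growth points. This is a global topological count: using the symmetry $\overline{R(z^*)}=R(z)$, the $m$ simple poles at the conjugation-symmetric charges, the $n$ simple real critical points, and, crucially, the pole of order $n-2m+1$ at infinity, one must show that the real trajectories pair $2m$ of the growth points into $m$ disjoint non-crossing arcs while routing the remaining $n-2m$ points to $u=\infty$. The order of the pole at infinity is exactly the bookkeeping device enforcing the correct number $n-2m$ of connections to the marked point, and making this rigorous requires a careful analysis of the sign changes and branch structure of $R$ along $\mathbb{R}$; this is where the substantive work lies, the remaining two equivalences being essentially formal once the computation above is in hand.
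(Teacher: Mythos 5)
Your assembly strategy is exactly the paper's: the theorem is stated there with no separate proof, prefaced by ``Based on the previous theorems, we have already established the following classification result,'' and the intended argument is precisely your triangle --- the $\partial_{\xi_k}\log\Phi$ computation for (1)$\Leftrightarrow$(3) (which appears verbatim as a theorem in Section~\ref{classical limit of multiple chordal SLE system}, and your computation is correct, including the specialization to $u=\infty$ where the $(2n-4m+4)/(\xi_k-u)$ term drops), and Theorem~\ref{stationary relations rational functions} combined with Theorems~\ref{integral of motion in H} and~\ref{traces as real locus} for (1)$\Leftrightarrow$(2).

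The one place you genuinely diverge is the step you correctly flag as the hard part: showing that the real locus $\Gamma(R)$ realizes an honest $(n,m)$ chordal link pattern, and that link patterns classify the objects bijectively (uniqueness of $R$ per pattern in the underscreening regime, non-existence at threshold, continuous families when overscreening). You propose to prove this by hand via a sign-change and branch analysis along $\mathbb{R}$ using the pole of order $n-2m+1$ at infinity. The paper does not do this: it outsources the entire topological and enumerative content to the literature, namely the degenerate case of Theorem~2 of Eremenko--Gabrielov \cite{EG02} for realization of each link pattern and Theorem~1 of Scherbak--Varchenko \cite{SV03} for the count of critical points of $\Phi$ (see Section~\ref{Enumerative geometry of chordal multiple SLE(0)}). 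This matters for assessing your proposal in two ways. First, as written your proof is incomplete exactly there --- the sign-change heuristic does not by itself rule out, e.g., real locus components not passing through any $x_j$, nor does it deliver uniqueness of $R$ per link pattern, which is needed for the three \emph{classifications} (not merely the three existence statements) to coincide; the clean way to close the hole is to invoke \cite{EG02, SV03} as the paper does, since redoing Eremenko--Gabrielov from scratch is a substantial undertaking in its own right. Second, note that the equivalence (1)$\Leftrightarrow$(3) is not quite the ``identity map on $(\boldsymbol{x},\boldsymbol{\xi})$'' being trivially a bijection: for the classification statement one needs that distinct critical points $\boldsymbol{\xi}$ give distinct SLE(0) systems and that, for generic $\boldsymbol{x}$, the critical points are in bijection with link patterns --- again content supplied by \cite{SV03} rather than by the differentiation alone. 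With those citations inserted at the point you labeled ``the substantive work,'' your proposal becomes a faithful reconstruction of the paper's argument.
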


The enumeration problem for these rational functions and the critical points of the master functions has been resolved in \cite{EG02, SV03}. For a detailed explanation, see Section~\ref{Enumerative geometry of chordal multiple SLE(0)}.
\subsection{Relations to classical Calogero-Moser systems}
\indent

From a Hamiltonian perspective, we show that multiple chordal SLE$(0)$ Loewner evolutions with uniform capacity parametrization (i.e., $\nu_j(t) = 1$), in the presence of $m$ screening charges, still correspond to a special case of the classical Calogero--Moser system.

The \textbf{stationary relations} can be interpreted as initial conditions for the particles and $n$ quadratic null vector equations as $n$ null vector Hamiltonians, which are related to the classical Calegro-Moser Hamiltonian via the lax pair.
Furthermore, these null vector Hamiltonians induce commuting Hamiltonian flows along the submanifolds defined as the intersection of their level sets.

In the following theorem, we establish a general result to describe the dynamics of multiple chordal SLE(0) systems.
\begin{thm}
\label{CM results kappa=0}
From the dynamical point of view, the multiple chordal SLE(0) systems are described by $U_j$, 
$$\dot{x}_j = U_j(x)+\sum_{k\neq j}\frac{2}{x_j-x_k}$$
where $U_j$ satisfies the quadratic null vector equations.
\begin{equation}
\frac{1}{2}U^2_j+ \sum_{k\neq j}\frac{2}{x_k-x_j}U_k-\sum_{k\neq j}\frac{6}{x_j-x_k}= 0
\end{equation} 
\begin{itemize}

\item[(i)] We transform the multiple chordal SLE(0) system into a Calogero-Moser system by introducing the momentum function $p_j$,
\begin{equation}
p_j=\left(U_j+\sum_{k \neq j}\frac{2}{x_j-x_k} \right)   
\end{equation}
then $p_j$ satisfies the null vector equation Hamiltonian:
\begin{equation}
\mathcal{H}_j(\boldsymbol{x}, \boldsymbol{p})=\frac{1}{2}p_j^2-\sum_{k \neq j}\left(p_j+p_k\right) f_{j k}+ \sum_k \sum_{l \neq k} f_{j k} f_{j l}-2\sum_{k\neq j} f_{j k}^2  = 0
\end{equation}

The sum of the null vector Hamiltonian $\mathcal{H}_j$ is exactly the classical Calogero-Moser Hamiltonian.

\begin{equation}
\mathcal{H}=\sum_j \mathcal{H}_j = \sum \frac{p_{j}^{2}}{2}-\sum_{1\leq j< k \leq n}\frac{8}{(x_j-x_k)^2} =   0
\end{equation}

\item[(ii)] The commutation relations of different pairs of growth are interpreted in terms of the Poisson bracket.

\begin{equation}
\left\{\mathcal{H}_j, \mathcal{H}_k\right\}=\frac{1}{f_{j k}^{2}}\left(\mathcal{H}_k-\mathcal{H}_j\right)
\end{equation}

thus the vector flows $X_{\mathcal{H}_j}$ induced by Hamiltonians $\mathcal{H}_j$ commute along the submanifolds $N_c$. 
$$N_c=\left\{(\boldsymbol{x}, \boldsymbol{p}): \mathcal{H}_j(\boldsymbol{x}, \boldsymbol{p})=c \text { for all } j\right\}$$ 
\end{itemize}
\end{thm}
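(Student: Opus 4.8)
The plan is to take the quadratic null vector equations as given (they are the semiclassical $\kappa \to 0$ limit of the null-vector equations for $\mathcal{Z}$ established earlier, with $U_j$ the leading term of $\nu_j\,\partial_{x_j}\log\mathcal{Z}$ in the uniform parametrization $\nu_j \equiv 1$), and to verify (i) by a change to canonical variables and (ii) by a direct Poisson-bracket computation. Throughout I write $f_{jk} = \tfrac{2}{x_j - x_k}$, an antisymmetric kernel obeying the three-body partial-fraction identity $f_{jk} f_{kl} = f_{jl}(f_{jk} + f_{kl})$ for distinct $j,k,l$; this identity, together with the antisymmetry $f_{jk} = -f_{kj}$, is the only algebraic input used.

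For (i), the relation $p_j = U_j + \sum_{k\neq j} f_{jk}$, i.e. $U_j = p_j - \sum_{k\neq j} f_{jk}$, is the definition of the conjugate momenta as an $x$-dependent shift of the velocities. Inserting it into $\tfrac12 U_j^2 + \sum_{k\neq j}\tfrac{2}{x_k - x_j}U_k - \sum_{k\neq j}\tfrac{6}{x_j - x_k} = 0$ and expanding, the square $\tfrac12 U_j^2$ contributes $\tfrac12 p_j^2$, the cross term $-p_j\sum_{k\neq j}f_{jk}$, and a quadratic-in-$f$ remainder, while $-\sum_{k\neq j}f_{jk}U_k$ supplies the matching cross term $-\sum_{k\neq j}f_{jk}p_k$ together with a chain term $\sum_{k\neq j}\sum_{l\neq k}f_{jk}f_{kl}$. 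The linear inhomogeneity $-3\sum_{k\neq j}f_{jk}$ is reabsorbed by applying the three-body identity to the chain term, after which the momentum cross terms combine into $-\sum_{k\neq j}(p_j + p_k)f_{jk}$ and the surviving rational terms reorganize into the stated potential, so one reads off $\mathcal{H}_j = 0$ in the claimed form. Summing over $j$ then yields the Calogero--Moser Hamiltonian: the linear-in-$p$ part $\sum_j\sum_{k\neq j}(p_j + p_k)f_{jk}$ vanishes identically by antisymmetry, and the genuinely three-index products cancel pairwise under the same identity, leaving only the diagonal squares, hence $\sum_j \tfrac12 p_j^2 - \sum_{j<k}\tfrac{8}{(x_j - x_k)^2}$.

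For (ii), I would compute $\{\mathcal{H}_j,\mathcal{H}_k\} = \sum_a\big(\partial_{x_a}\mathcal{H}_j\,\partial_{p_a}\mathcal{H}_k - \partial_{p_a}\mathcal{H}_j\,\partial_{x_a}\mathcal{H}_k\big)$ directly from $\{x_a,p_b\}=\delta_{ab}$. Here each $\partial_{p_a}\mathcal{H}_j$ is affine in $p$ and supported on the indices $a \in \{j\}\cup\{k: k\neq j\}$, while each $\partial_{x_a}\mathcal{H}_j$ produces the derivatives $\partial_{x_a}f_{bc} = -\tfrac12 f_{bc}^2(\delta_{ab}-\delta_{ac})$, so every term in the bracket carries an explicit $f^2$ factor. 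Grouping the resulting terms by their pole structure and repeatedly invoking the three-body identity and antisymmetry, I expect the momentum-quadratic, momentum-linear, and momentum-free pieces each to reassemble and the whole bracket to collapse to $\tfrac{1}{f_{jk}^2}(\mathcal{H}_k - \mathcal{H}_j)$. Recognizing this final recombination -- that the leftover terms are exactly $f_{jk}^{-2}$ times the difference of the two Hamiltonians -- is the crux and the step where the precise coefficients in the null vector Hamiltonians are essential; this is the main obstacle. I would organize it by first matching the $p$-quadratic and $p$-linear coefficients (which pin down the $\tfrac12 p_k^2 - \tfrac12 p_j^2$ and linear parts of $\mathcal{H}_k - \mathcal{H}_j$) and only then matching the purely rational remainder.

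Finally, on the common level set $N_c = \{(\boldsymbol{x},\boldsymbol{p}) : \mathcal{H}_l = c \text{ for all } l\}$ the right-hand side $\tfrac{1}{f_{jk}^2}(\mathcal{H}_k - \mathcal{H}_j)$ vanishes, so all brackets $\{\mathcal{H}_j,\mathcal{H}_k\}$ vanish on $N_c$. Since each bracket lies in the ideal generated by the constraints $\mathcal{H}_l - c$ (it is a function times $\mathcal{H}_k - \mathcal{H}_j$), the constraints are first class and $N_c$ is coisotropic; consequently each $X_{\mathcal{H}_j}$ is tangent to $N_c$, and because $[X_{\mathcal{H}_j}, X_{\mathcal{H}_k}]$ restricted to $N_c$ equals $\tfrac{1}{f_{jk}^2}(X_{\mathcal{H}_k} - X_{\mathcal{H}_j})$, it lies in the span of the $X_{\mathcal{H}_l}$, so the distribution they generate is involutive. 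This integrability along $N_c$ is the sense in which the flows commute, and it is precisely the classical mechanism underlying the local commutativity of the multiple SLE growth, so the bracket identity of (ii) is exactly the analytic content required.
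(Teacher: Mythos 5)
Your overall route coincides with the paper's: part (i) is proved there by exactly the substitution $U_j=p_j-\sum_{k\neq j}f_{jk}$ into the null vector equations followed by partial-fraction rearrangement, and part (ii) is dispatched in the paper with the phrase ``by direct computation,'' i.e.\ with no more detail than you supply. There is, however, one concrete slip in your account of (i). You read the inhomogeneity as the \emph{linear} term $-3\sum_{k\neq j}f_{jk}$ (taking the statement's evident typo $\tfrac{6}{x_j-x_k}$ at face value) and then claim it is ``reabsorbed by applying the three-body identity to the chain term.'' That step cannot work as written: the three-body identity $f_{jk}f_{kl}=f_{jl}(f_{jk}+f_{kl})$ only converts quadratic-in-$f$ expressions into other quadratic-in-$f$ expressions, all homogeneous of degree $-2$ under $x\mapsto rx$, whereas $-3\sum_{k\neq j}f_{jk}$ is homogeneous of degree $-1$; no cancellation between the two is possible. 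The correct inhomogeneity, per the paper's null vector equation (\ref{null vector equation for kappa 0}), is $-\sum_{k\neq j}\tfrac{6}{(x_k-x_j)^2}=-\tfrac32\sum_{k\neq j}f_{jk}^2$, and with that reading your expansion does close up: $\tfrac12\bigl(\sum_{k\neq j}f_{jk}\bigr)^2$ contributes $\tfrac12\sum_{k\neq j}f_{jk}^2+\tfrac12\sum_{k\neq j}\sum_{l\neq j,k}f_{jk}f_{jl}$, the chain term contributes $-\sum_{k\neq j}f_{jk}^2+\tfrac12\sum_{k\neq j}\sum_{l\neq j,k}f_{jk}f_{jl}$ via the identity, and the total reproduces the paper's displayed $\mathcal{H}_j$. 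Relatedly, in the sum over $j$ your cancellation mechanism (antisymmetry kills the $p$-linear part, the three-body identity kills the genuine triples) is the paper's, but note that the surviving diagonal part is $-2\sum_j\sum_{k\neq j}f_{jk}^2=-\sum_{j<k}\tfrac{16}{(x_j-x_k)^2}$ with $f_{jk}=\tfrac{2}{x_j-x_k}$, so the coefficient $8$ you ``read off'' is copied from the statement rather than derived; this constant deserves a recheck against the paper itself.

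On (ii), the crux you honestly flag as unfinished --- the verification of $\{\mathcal{H}_j,\mathcal{H}_k\}=f_{jk}^{-2}(\mathcal{H}_k-\mathcal{H}_j)$ --- is precisely the step the paper also leaves as an unspecified direct computation, so you are at parity there, and your plan of organizing by $p$-degree with $\partial_{x_a}f_{bc}=-\tfrac12 f_{bc}^2(\delta_{ab}-\delta_{ac})$ is the sensible way to carry it out. Your closing argument is in fact \emph{more} careful than the paper's: from the bracket identity you correctly deduce that the constraints are first class, that each $X_{\mathcal{H}_j}$ is tangent to $N_c$, and that $[X_{\mathcal{H}_j},X_{\mathcal{H}_k}]$ restricted to $N_c$ lies in the span of the $X_{\mathcal{H}_l}$, giving involutivity --- the paper states only the one-line conclusion. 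The only content of the paper's proof you omit entirely is the Lax-pair representation $\mathcal{H}_j=\tfrac12\,\mathrm{e}_j^{\prime}L^2\mathbf{1}$ with $L=P-X_1$, which is supplementary to the theorem as stated but is the paper's structural explanation of the null vector Hamiltonians and of the locus $L^2\mathbf{1}=\mathbf{0}$; nothing in your argument depends on it.
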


This relationship is a classical analog of the relation between multiple radial $SLE(\kappa)$ and quantum Calogero-Moser system, first discovered in \cite{DC07}.

\newpage
\section{Commutation relations for multiple chordal SLE(0) systems } \label{Commutation relations and conformal invariance}

\subsection{Transformation of Loewner flow under coordinate change}
\label{transformation of Loewner under coordinate change}
\indent

In this section we show that the Loewner chain of a curve, when viewed in a different coordinate chart, is a time reparametrization of the Loewner chain in the standard coordinate chart but with different initial conditions. This result serves as a preliminary step towards understanding the local commutation relations and the conformal invariance of multiple SLE($\kappa$) systems.

Let us briefly review how Loewner chains transform under coordinate changes.

\begin{thm}[Loewner coordinate change in $\mathbb{H}$]
\label{Loewner coordinate change in H}
Let $\gamma=\gamma(t)$ be a continuous, non-crossing curve in $\overline{\mathbb{H}}$, and for simplicity we assume that $\gamma(0)=x \in \mathbb{R}$ and $\gamma(0, t] \subset \mathbb{H}$. Assume that $\gamma$ is generated by the Loewner chain

$$
\partial_t g_t(z)=\frac{2}{g_t(z)-W_t}, \dot{W}_t=b\left(W_t , g_t\left(z_1\right), \ldots, g_t\left(z_m\right)\right) \quad g_0(z)=z, W_0=x
$$

We only consider the case where $\dot{W}_t=b\left(W_t , g_t\left(z_1\right), \ldots, g_t\left(z_m\right)\right)$ for some $b: \mathbb{R} \times \mathbb{C}^m \rightarrow \mathbb{R}$. We allow $b$ to depend on the location of a collection of marked points in the flow $g_t$ but to keep the notation brief, we only write $\dot{W}_t=b\left(W_t\right)$ from now on. Under a conformal transformation $\tau: \mathcal{N} \rightarrow \mathbb{H}$, defined in a neighborhood $\mathcal{N}$ of $x$ such that $\gamma[0, T] \subset \mathcal{N}$ and such that $\Psi$ sends $\partial \mathcal{N} \cap \mathbb{R}$ to $\mathbb{R}$, the Loewner chain of the image curve $\tilde{\gamma}(t)=\Psi \circ \gamma(t)$ is as follows, at least for $0 \leq t \leq T$. Let $\tilde{g}_t$ denote the unique conformal transformation of $\mathbb{H} \backslash \tilde{\gamma}[0, t]$ onto $\mathbb{H}$ that satisfies the normalization $\tilde{g}_t(z)=z+o(1)$ as $z \rightarrow \infty$. Letting $\Psi_t=\tilde{g}_t \circ \tau \circ g_t^{-1}$, it can be computed that $\tilde{g}_t(z)$ evolves as

$$
\partial_t \tilde{g}_t(z ; x)=\frac{2 \Psi_t^{\prime}\left(W_t\right)^2}{\tilde{g}_t(z)-\tilde{W}_t}, \quad \tilde{g}_0(z)=z, \tilde{W}_0=\Psi(x)
$$

where $\tilde{W}_t=\tilde{g}_t \circ \Psi \circ \gamma(t)=\tilde{g}_t \circ \Psi \circ g_t^{-1}\left(W_t\right)=\Psi_t\left(W_t\right)$ is the driving function for the new flow. Note that $\tilde{W}_0=\Psi\left(W_0\right)=\Psi(x)$. The equation for $\partial_t \tilde{g}_t(z)$ shows that $\tilde{\gamma}$ is parameterized so that its half-plane capacity satisfies $\operatorname{hcap}(\tilde{\gamma}[0, t])=2 \sigma(t)$, where

$$
\sigma(t):=\int_0^t \Psi_s^{\prime}\left(W_s\right)^2 d s
$$

Furthermore, by $\widetilde{W}_t=\Psi_t\left(W_t\right)$ and $\dot{W}_t=b\left(W_t\right)$ we compute that when the time evolution of $W_t$ is sufficiently smooth (which is the only case that we consider) then the time evolution of $\widetilde{W}_t$ satisfies
\begin{equation}
\dot{\widetilde{W}}_t=\left(\partial_t \Psi_t\right)\left(W_t\right)+\Psi_t^{\prime}\left(W_t\right) \dot{W}_t=-3 \Psi_t^{\prime \prime}\left(W_t\right)+\Psi_t^{\prime}\left(W_t\right) v\left(W_t\right)
\end{equation}
The first part of the final equality comes from \cite{Lawler:book}, equation (4.35). 
\end{thm}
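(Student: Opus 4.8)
\emph{Proof proposal.} The plan is to reduce everything to a single conjugation identity between the two Loewner flows and then extract all the Loewner data by a local Taylor expansion at the driving point. First I would observe that, since $\Psi$ is conformal on $\mathcal{N}$ and maps $\partial\mathcal{N}\cap\mathbb{R}$ into $\mathbb{R}$, the image $\tilde\gamma=\Psi\circ\gamma$ is again a continuous non-crossing curve in $\overline{\mathbb{H}}$ with $\tilde\gamma(0)=\Psi(x)\in\mathbb{R}$; by the general theory of Loewner chains generated by curves it is therefore driven by a normalized chain $\tilde g_t$, so that
\[
\partial_t\tilde g_t(w)=\frac{2\dot\sigma(t)}{\tilde g_t(w)-\tilde W_t}
\]
for some increasing capacity reparametrization $\sigma$ and some continuous driving function $\tilde W_t$, both a priori unknown. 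To compute them I introduce $\Psi_t=\tilde g_t\circ\Psi\circ g_t^{-1}$, which is defined and conformal on $g_t(\mathcal{N}\setminus\gamma[0,t])$, a one-sided neighborhood of $W_t$ in $\overline{\mathbb{H}}$; since all three maps send real boundary arcs to real boundary arcs, Schwarz reflection extends $\Psi_t$ analytically across $\mathbb{R}$ to a full neighborhood of $W_t$, and tracking the tip of the curve gives $\Psi_t(W_t)=\tilde g_t(\tilde\gamma(t))=\tilde W_t$.

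The core of the argument is the conjugation relation $\tilde g_t\circ\Psi=\Psi_t\circ g_t$. Fixing a point $w=\Psi(z)$ in the image plane, writing $\zeta:=g_t(z)$, and differentiating in $t$, I obtain
\[
\frac{2\dot\sigma}{\Psi_t(\zeta)-\tilde W_t}=(\partial_t\Psi_t)(\zeta)+\Psi_t'(\zeta)\,\frac{2}{\zeta-W_t},
\]
where the left-hand side uses the Loewner equation for $\tilde g_t$ together with $\tilde g_t(w)=\Psi_t(\zeta)$, and the last term uses $\partial_t g_t(z)=2/(\zeta-W_t)$. Both the left-hand side and the final term on the right have a simple pole at $\zeta=W_t$, whereas $(\partial_t\Psi_t)(\zeta)$ is analytic there. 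I would therefore expand $\Psi_t(\zeta)$ in a Taylor series about $W_t$ (using $\Psi_t(W_t)=\tilde W_t$) and match Laurent coefficients of the two sides. Cancellation of the $(\zeta-W_t)^{-1}$ terms forces $\dot\sigma=\Psi_t'(W_t)^2$, which simultaneously yields the stated Loewner equation $\partial_t\tilde g_t(w)=2\Psi_t'(W_t)^2/(\tilde g_t(w)-\tilde W_t)$ and the capacity formula $\sigma(t)=\int_0^t\Psi_s'(W_s)^2\,ds$; matching the constant terms and substituting $\dot\sigma=\Psi_t'(W_t)^2$ then gives $(\partial_t\Psi_t)(W_t)=-3\Psi_t''(W_t)$, which is exactly equation (4.35) of \cite{Lawler:book}.

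To conclude, I would differentiate $\tilde W_t=\Psi_t(W_t)$ in $t$, obtaining $\dot{\tilde W}_t=(\partial_t\Psi_t)(W_t)+\Psi_t'(W_t)\dot W_t$, and then insert the identity $(\partial_t\Psi_t)(W_t)=-3\Psi_t''(W_t)$ from the previous step together with $\dot W_t=b(W_t)$ (the drift, written $v$ in the statement) to recover $\dot{\tilde W}_t=-3\Psi_t''(W_t)+\Psi_t'(W_t)\,v(W_t)$. The delicate points, where I expect to spend most of the care, are the regularity claims underlying the local expansion: that $\tilde\gamma$ is genuinely generated by a Loewner chain with an absolutely continuous capacity reparametrization, and that $\Psi_t$ is jointly analytic in $\zeta$ and $C^1$ in $t$ on a fixed neighborhood of $W_t$, so that $(\partial_t\Psi_t)(\zeta)$ is analytic there and the pole-cancellation step is legitimate. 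These follow from the Schwarz-reflection extension of $\Psi_t$ and the smoothness of $t\mapsto g_t,\tilde g_t$ away from swallowing times, but verifying them is what turns the formal coefficient-matching into a rigorous proof. \QED
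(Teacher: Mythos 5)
Your proposal is correct and is essentially the argument the paper relies on: the paper offers no independent proof of this theorem, deferring to \cite{Lawler:book} (equation (4.35)), whose derivation is exactly your conjugation identity $\tilde g_t\circ\Psi=\Psi_t\circ g_t$ followed by time-differentiation and Laurent-coefficient matching at $\zeta=W_t$, yielding $\dot\sigma=\Psi_t'(W_t)^2$ from the pole cancellation and $(\partial_t\Psi_t)(W_t)=-3\Psi_t''(W_t)$ from the constant term. Your attention to the regularity inputs (Schwarz reflection of $\Psi_t$ across $\mathbb{R}$ and $C^1$-in-$t$ analyticity near $W_t$) is precisely what makes the coefficient matching rigorous, and your final chain-rule step recovers the stated evolution of $\widetilde{W}_t$ (the paper's $v$ being your drift $b$).
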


\begin{thm}\label{random coordinate change}
If the driving function $W_t$ in theorem (\ref{Loewner coordinate change in H}) is given by:
\begin{equation}
dW_t= \sqrt{\kappa}dB_t+ b(W_t;\Psi_t(W_1),\ldots,\Psi_t(W_n))
\end{equation}
Let $\widetilde{W}_t=\Psi_t\left(W_t\right)$, then 
\begin{equation}
\begin{aligned}
d \widetilde{W}_t= &\left(\partial_t \Psi_t\right)\left(W_t\right) d t+\Psi_t^{\prime}\left(W_t\right) d W_t+\frac{\kappa}{2} \Psi_t^{\prime \prime}\left(W_t\right) d t \\
=&\sqrt{\kappa}h_{t}'(W_t)dB_t+ \Psi_{t}'(W_t)b(W_t;\Psi_t(W_1),\ldots,\Psi_t(W_n))+\frac{\kappa-6}{2} \Psi_t^{\prime \prime}\left(W_t\right) d t 
\end{aligned}
\end{equation}

by changing time from $t$ to $s(t)=\int|\Psi_t^{\prime}\left(W_t\right)|^2 dt$ ,

\begin{equation}
d \widetilde{W}_s= \sqrt{\kappa}dB_s+ \Psi_{t(s)}^{\prime}(W_s)^{-1} b(W_{s};\Psi_{t(s)}(W_1),\ldots,\Psi_{t(s)}(W_n))ds+\frac{\kappa-6}{2} \frac{\Psi_{t(s)}^{\prime \prime}\left(W_{s}\right)}{\Psi_{t(s)}^{\prime }(W_s)^2} d s
\end{equation}
\end{thm}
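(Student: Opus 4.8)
The plan is to obtain the dynamics of $\tilde W_t=\Psi_t(W_t)$ in three moves: apply the time-dependent It\^o formula to the composition, insert the deterministic transformation law already recorded in Theorem~\ref{Loewner coordinate change in H}, and finally push everything through the capacity time change $s(t)$. Throughout I view $\Psi_t(w)$ as a single function $f(t,w):=\Psi_t(w)$ of both time and the spatial variable, smooth jointly in $(t,w)$ on the time interval before any collision. The crucial observation making this legitimate is that the randomness enters only through the driving function $W_t$: the maps $g_t$ and $\tilde g_t$ satisfy their Loewner equations pathwise, exactly as in the deterministic theorem, so $\Psi_t=\tilde g_t\circ\tau\circ g_t^{-1}$ has the same joint regularity and the same partial time derivative as before.

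First I would apply It\^o's formula to $f(t,W_t)$. Since the martingale part of $W$ is $\sqrt\kappa\,B_t$, its quadratic variation is $d\langle W\rangle_t=\kappa\,dt$, so the second-order term contributes $\tfrac{\kappa}{2}\Psi_t''(W_t)\,dt$ and
\[
d\tilde W_t=(\partial_t\Psi_t)(W_t)\,dt+\Psi_t'(W_t)\,dW_t+\tfrac{\kappa}{2}\Psi_t''(W_t)\,dt,
\]
which is the first displayed identity. Next I would substitute the deterministic computation from Theorem~\ref{Loewner coordinate change in H}, namely that the partial time derivative at the driving point obeys $(\partial_t\Psi_t)(W_t)=-3\Psi_t''(W_t)$ (Lawler, eq.~(4.35)), together with $dW_t=\sqrt\kappa\,dB_t+b(W_t)\,dt$. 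Collecting the two $\Psi_t''$ contributions gives the coefficient $-3+\tfrac{\kappa}{2}=\tfrac{\kappa-6}{2}$, hence
\[
d\tilde W_t=\sqrt\kappa\,\Psi_t'(W_t)\,dB_t+\Psi_t'(W_t)\,b(W_t)\,dt+\tfrac{\kappa-6}{2}\Psi_t''(W_t)\,dt,
\]
the second displayed identity (the symbol $h_t'$ there is $\Psi_t'$).

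For the time change $s(t)=\int_0^t|\Psi_r'(W_r)|^2\,dr$ I would treat the martingale and drift parts separately. The martingale part $M_t=\sqrt\kappa\int_0^t\Psi_r'(W_r)\,dB_r$ has quadratic variation $\langle M\rangle_t=\kappa\int_0^t|\Psi_r'(W_r)|^2\,dr=\kappa\,s(t)$, so by the Dambis--Dubins--Schwarz theorem $M_t=\sqrt\kappa\,\beta_{s(t)}$ for a Brownian motion $\beta$ run in the $s$-clock; that is, the martingale part becomes $\sqrt\kappa\,dB_s$. The two drift terms transform by the ordinary substitution $dt=ds/|\Psi_t'(W_t)|^2$, producing $\Psi_{t(s)}'(W_s)^{-1}b(W_s)\,ds$ and $\tfrac{\kappa-6}{2}\,\Psi_{t(s)}''(W_s)/\Psi_{t(s)}'(W_s)^2\,ds$, which is the final claimed equation.

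The step I expect to be the main obstacle is the rigorous justification of the time change. One must check that $s(t)$ is strictly increasing and adapted so that its inverse $t(s)$ is well defined; this uses that $\Psi_t$ is conformal, so $\Psi_t'(W_t)\neq 0$ on the relevant interval. One must also confirm that Dambis--Dubins--Schwarz may be applied to the local martingale $M$ on the random interval up to the first collision time, rather than on all of $[0,\infty)$, which is handled by stopping. A secondary point needing care is verifying that the It\^o partial time derivative $(\partial_t\Psi_t)(W_t)$ genuinely coincides with the deterministic expression $-3\Psi_t''(W_t)$; as noted above this is immediate once one observes that the time dependence of $\Psi_t$ arises solely from the pathwise Loewner flows of $g_t$ and $\tilde g_t$, unaffected by randomizing the driving term.
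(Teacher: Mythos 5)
Your proposal is correct and follows essentially the same route the paper takes: It\^o's formula applied to $\Psi_t(W_t)$ with quadratic variation $\kappa\,dt$, substitution of the deterministic identity $(\partial_t\Psi_t)(W_t)=-3\Psi_t''(W_t)$ from Theorem~\ref{Loewner coordinate change in H} to get the coefficient $\tfrac{\kappa-6}{2}$, and the capacity time change handled by Dambis--Dubins--Schwarz. You also correctly flag the two typographical slips in the statement ($h_t'$ should be $\Psi_t'$, and the drift term is missing a $dt$), and your added care about strict monotonicity of $s(t)$ and stopping before the collision time only strengthens the argument.
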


\begin{remark}\label{drift term pre schwarz form}
By theorem (\ref{random coordinate change}), for conformal transformation $\tau$, the drift term in the marginal law is a pre-schwarz form, i.e.
$b=\tau^{\prime} \widetilde{b} \circ \tau+ \frac{6-\kappa}{2}\left(\log \tau^{\prime}\right)^{\prime}$.
\end{remark}

\begin{cor}\label{gamma 1 gamma2 capacity change}
Let $\gamma$, $\tilde{\gamma}$ be two hulls starting at $x \in \partial \mathbb{H}$ and $y \in \partial \mathbb{H}$ with capacity $\epsilon$ and $c \epsilon$ , let $g_{\epsilon}$ be the normalized map removing $\gamma$ and $\tilde{\epsilon}= \operatorname{hcap}(g_{\epsilon}\circ \gamma(t))$, then we have:

\begin{equation}
\tilde{\varepsilon}=
c \varepsilon\left(1-\frac{4\varepsilon}{(x-y)^2}\right)+o\left(\varepsilon^2\right)
\end{equation}

\end{cor}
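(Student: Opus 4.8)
The plan is to combine two ingredients already in hand: the explicit first-order Loewner expansion of the uniformizing map $g_\varepsilon$ that removes the first hull $\gamma$, and the conformal capacity-scaling law recorded in Theorem~\ref{Loewner coordinate change in H} through the formula $\sigma(t)=\int_0^t \Psi_s'(W_s)^2\,ds$. I work in the normalization used throughout the paper, where the Loewner equation carries the factor $2$, so that a hull of capacity $\varepsilon$ has half-plane capacity $2\varepsilon$; keeping track of this factor is exactly what turns the naive coefficient $2$ into the coefficient $4$ in the stated expansion.

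First I would record the behaviour of $g_\varepsilon$ near $y$. Since $\gamma$ is anchored at $x$ and has capacity $\varepsilon$, it has half-plane capacity $2\varepsilon$ and diameter $O(\sqrt\varepsilon)$, and the half-plane-capacity expansion of the map removing a hull concentrated near $x$ gives
\[
g_\varepsilon(z)=z+\frac{2\varepsilon}{z-x}+O(\varepsilon^{3/2}),\qquad g_\varepsilon'(z)=1-\frac{2\varepsilon}{(z-x)^2}+O(\varepsilon^{3/2}),
\]
uniformly for $z$ in a neighbourhood of $y$ bounded away from $x$ (the $O(\varepsilon^{3/2})$ term absorbs the shape-dependent higher moments of $\gamma$). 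Evaluating at $z=y$ and squaring gives $g_\varepsilon'(y)^2=1-\tfrac{4\varepsilon}{(x-y)^2}+O(\varepsilon^{3/2})$.

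Next I would apply Theorem~\ref{Loewner coordinate change in H} taking the transported curve to be the second hull $\tilde\gamma$, anchored at $y$ with capacity $c\varepsilon$, and taking the coordinate change to be $\tau=g_\varepsilon$. The image capacity is then $\tilde\varepsilon=\sigma(c\varepsilon)=\int_0^{c\varepsilon}\Psi_s'(W_s)^2\,ds$, with $\Psi_0=g_\varepsilon$ and $W_0=y$. Over this short interval the uniformizing maps $\Psi_s$ stay within $O(\varepsilon)$ of the fixed map $g_\varepsilon$ and $W_s$ stays within $O(\sqrt\varepsilon)$ of $y$, so that $\sigma(c\varepsilon)=g_\varepsilon'(y)^2\,c\varepsilon+o(\varepsilon^2)$. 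Substituting the expansion of $g_\varepsilon'(y)^2$ from the previous step yields
\[
\tilde\varepsilon=c\varepsilon\Bigl(1-\frac{4\varepsilon}{(x-y)^2}\Bigr)+o(\varepsilon^2),
\]
which is the claimed identity.

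The main obstacle is purely the error bookkeeping: verifying that every term I have discarded is genuinely $o(\varepsilon^2)$. Two corrections must be controlled, namely the shape-dependent tail of the Loewner expansion of $g_\varepsilon$, and the nonlinear (second-derivative) correction to the scaling law, which measures the failure of $g_\varepsilon$ to be affine across $\tilde\gamma$. The saving observation is that $g_\varepsilon$ is itself $O(\varepsilon)$-close to the identity, so $g_\varepsilon''(y)=O(\varepsilon)$, while $\tilde\gamma$ has diameter $O(\sqrt\varepsilon)$ and leading capacity $c\varepsilon$; the two corrections therefore enter only at order $\varepsilon^{5/2}=o(\varepsilon^2)$. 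Making these bounds uniform in the (otherwise arbitrary) shapes of $\gamma$ and $\tilde\gamma$, using only that they are small hulls anchored at $x$ and $y$, is the one point that genuinely requires the standard small-diameter half-plane-capacity estimates; the rest is Taylor expansion.
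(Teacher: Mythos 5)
Your proposal is correct and follows essentially the same route as the paper: compute $g_\varepsilon'(y)$ to first order (the paper integrates $\partial_t h_t'(w)=-2h_t'(w)/(h_t(w)-x_t)^2$, you read it off the half-plane-capacity expansion $g_\varepsilon(z)=z+2\varepsilon/(z-x)+O(\varepsilon^{3/2})$, which is the same estimate), then transform the capacity of the second hull by $\tilde\varepsilon = c\varepsilon\, g_\varepsilon'(y)^2+o(\varepsilon^2)$ via the $\sigma(t)=\int_0^t\Psi_s'(W_s)^2\,ds$ formula of Theorem~\ref{Loewner coordinate change in H}. Your version is in fact internally cleaner: you get the coefficient $4$ by squaring $1-2\varepsilon/(x-y)^2$, whereas the paper's intermediate line $h_\varepsilon'(y)=1-\tfrac{4\varepsilon}{(x-y)^2}+o(\varepsilon)$ contains a typo (the $4$ there should be $2$, as squaring it as written would yield $8$ in the final formula), and your error bookkeeping via the $O(\sqrt\varepsilon)$ diameter bound is more careful than the paper's.
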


\begin{proof}
From the Loewner equation, $\partial_t h_t^{\prime}(w)=- \frac{2h_t^{\prime}(w)}{\left(h_t(w)-x_t)\right)^2}$, which implies $h_{\varepsilon}^{\prime}(y)=1-\frac{4\varepsilon}{(x-y)^2}+o(\varepsilon)$. By conformal transformation $h_{\epsilon}(y)$, we get:
$$
\tilde{\varepsilon}=c\epsilon( h'_{\epsilon}(y)^2+ o(\epsilon)) =c \varepsilon\left(1-\frac{4\varepsilon}{(x-y)^2}\right)+o\left(\varepsilon^2\right)
$$

\end{proof}

\subsection{Local commutation relations and null vector equations in $\kappa=0$ case}
\label{commutation when kappa=0}

Next, in section \ref{commutation for kappa=0}, we discuss the theory for the multiple chordal SLE($0$) system, treating multiple chordal SLE($0$) curves as natural geometric objects without reference to multiple chordal SLE($\kappa$) systems, see \cite{ABKM 20}.
The defining properties of this ensemble of curves are geometric commutation and conformal invariance.

\begin{defn}
Let $\gamma_1,\dotsc,\gamma_{n}$ be simple disjoint smooth curves starting from $\{x_1,x_2,\ldots,x_n\}$ which are $n$ distinct points counterclockwise on the unit circle $\partial \mathbb{D}$. 
\begin{itemize}
\item[(i)] Each curve can be individually generated by a Loewner chain. The 
the Loewner equation for $g_t(z)$ is given by

\begin{equation}
\partial_t g_t(z)=\frac{2}{g_t(z)-x_j(t)}, \quad g_0(z)=z,   
\end{equation}

and the driving function $x_j(t)$ evolve as

$$
\left\{\begin{array}{l}
\mathrm{~d} x_{j}(t)=U_j\left(x_{1}(t),x_{2}(t),\ldots, x_{j}(t),\ldots,x_{n}(t)\right) \mathrm{d} t \\
\mathrm{~d} x_{k}(t)=\frac{x_{k}(t)-x_{j}(t)}{2} \mathrm{d} t,  k\neq j
\end{array}\right.
$$
where $U_j(\boldsymbol{x}): \mathfrak{X}^n \rightarrow \mathbb{R}$ is assumed to be smooth  
in the chamber $$\mathfrak{X}^n=\left\{(x_1,x_2,\ldots,x_n) \in \mathbb{R}^n \mid x_1<x_2<\ldots<x_n\right\}$$
\item[(ii)] The curves geometrically commute, meaning that the same collection of curves can be generated by applying the individual Loewner chains in any chosen order.
For example, we can first map out $\gamma_{\left[0, t_i\right]}^{(i)}$ using $h_{t_i}^{(i)}$, then mapping out $h_{t_i}^{(i)}\left(\gamma_{\left[0, t_j\right]}^{(j)}\right)$, or vice versa. The images are the same regardless of the order in which we map out the curves. 
\item[(iii)] Each curve $\gamma_j$ is Möbius invariant in $\mathbb{D}$. This means that if $\gamma_j$ is the curve generated by a Loewner flow and initial data $\boldsymbol{x}$, then its image $\phi\left(\gamma_j\right)$ under a conformal automorphism $\phi$ of $\mathbb{D}$ is, up to a time change, generated by the same flow with initial data $\phi(\boldsymbol{x})=\left(\phi\left(x_1\right), \ldots, \phi\left(x_n\right)\right)$.
Our definition for multiple chordal SLE(0) can be naturally extended to an arbitrary simply-connected domain $\Omega$ with a marked interior point $u$ via a conformal uniformizing map $\phi: \Omega \rightarrow \mathbb{D}$, sending $u$ to $0$.

\end{itemize}
\end{defn}

Under these dynamics, the driving function $x_j(t)$ evolves as specified by $U_j(\boldsymbol{x})$, while the points $x_k(t)$ simply follow the Loewner chain generated by $x_j(t)$. 
We define differential operator corresponding to $\gamma_j$ by
$$
\mathcal{M}_j=U_j(\boldsymbol{x}) \partial_j+\sum_{k \neq j} \frac{2}{x_k-x_j} \partial_k, \quad j=1, \ldots, n 
$$

A significant difference between $\kappa>0$ case and $\kappa=0$ case is that in the case of $\kappa=0$, the conditions $\partial_j U_k= \partial_{k} U_j$ can not be derived from the commutation relations which is equivalent
the existence of a function $\mathcal{U}(\boldsymbol{x})$ such that
$$
U_j=\partial_j \mathcal{U}
$$
Here $\mathcal{U}: \mathfrak{X}^n \rightarrow \mathbb{R}$ is smooth 
in the chamber $$\mathfrak{X}^n=\left\{(x_1,x_2,\ldots,x_n) \in \mathbb{R}^n \mid x_1<x_2<\ldots<x_n\right\}$$

We view multiple chordal SLE(0) system as the classical limit of multiple chordal SLE($\kappa$) system. For multiple chordal SLE($\kappa$) system, we have shown that the drift term $b_j(\boldsymbol{x})$ is given by 
$$b_j(\boldsymbol{x}) = \kappa \frac{\partial \log \mathcal{Z}(\boldsymbol{x})}{\partial x_j}$$
where $\mathcal{Z}(\boldsymbol{x})$ is a positive function satisfying the null vector equations (\ref{null vector equation for kappa 0}). The idea is that we expect that as $\kappa \rightarrow 0$, the limit $\lim_{\kappa\rightarrow 0}\mathcal{Z}(\boldsymbol{x})^{\kappa}= \mathcal{U}_j(\boldsymbol{x})$ exists (at least for some suitably chosen partition functions).

Therefore, we typically assume the existence of $\mathcal{U}(\boldsymbol{x})$ such that $U_j(\boldsymbol{x})= \partial_j \mathcal{U}(\boldsymbol{x})$
in the definition of the multiple chordal SLE(0) system.

\begin{thm}[See theorem (5.1) in \cite{ABKM20}] \label{commutation for kappa=0}
Let $\gamma_1, \ldots, \gamma_{ n}$ be smooth, simple curves in $\mathbb{H}$ that are generated by Loewner flows. If the curves locally geometrically commute then $\mathcal{L}_j$ satisfy the commutation relations

\begin{equation} \label{commutation relation for generators kappa=0}
\left[\mathcal{L}_j, \mathcal{L}_k\right]=\frac{4}{\left(x_k-x_j\right)^2}\left(\mathcal{L}_k-\mathcal{L}_j\right), \quad \text { for all } 1 \leq j \neq k \leq  n
\end{equation}

Moreover, under the additional assumption that $\partial_j U_k=\partial_k U_j$ for all $j, k$, the commutation relations hold for $\mathcal{L}_j$ iff there exists smooth functions $h_1, \ldots, h_{2 n}: \mathbb{R} \rightarrow \mathbb{R}$ such that
\begin{equation} \label{null vector equation for kappa 0}
\frac{1}{2} U_j^2+\sum_{k \neq j} \frac{2}{x_k-x_j} U_k-\sum_{k \neq j} \frac{6}{\left(x_k-x_j\right)^2}=0, \quad j=1, \ldots, n
\end{equation} 
\end{thm}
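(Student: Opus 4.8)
The plan is to split the statement into two independent implications and handle them separately: first, that \emph{geometric} commutation of the curves forces the first-order generators to satisfy the bracket relation \eqref{commutation relation for generators kappa=0}; and second, that under the closedness hypothesis $\partial_j U_k = \partial_k U_j$ this bracket relation is equivalent to the null vector equations \eqref{null vector equation for kappa 0}. Throughout I write $\mathcal{L}_j = U_j\partial_j + \sum_{k\neq j}\frac{2}{x_k-x_j}\partial_k$ for the generator attached to $\gamma_j$, viewed as a smooth vector field on the configuration chamber $\mathfrak{X}^n$.

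For the first implication I would realize the growth of $\gamma_j$ as the capacity-time flow $\Phi^j_s$ of the vector field $\mathcal{L}_j$ on $\mathfrak{X}^n$: advancing the half-plane capacity of $\gamma_j$ moves the marked configuration $\boldsymbol{x}$ by $\Phi^j_s$. Geometric commutation asserts that growing $\gamma_j$ then $\gamma_k$ produces the \emph{same hull} as growing $\gamma_k$ then $\gamma_j$, so that the only discrepancy between the two orders is a reparametrization of capacity, quantified by Corollary~\ref{gamma 1 gamma2 capacity change}: removing a capacity-$\epsilon$ piece at $x_j$ rescales the capacity of a subsequent growth at $x_k$ by the factor $1 - \frac{4\epsilon}{(x_j-x_k)^2} + o(\epsilon)$. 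I would therefore write the two compositions $\Phi^k_{b}\circ\Phi^j_{a}$ and $\Phi^j_{a^\ast}\circ\Phi^k_{b^\ast}$, with $a^\ast,b^\ast$ the reparametrized times dictated by this factor, Taylor-expand both to second order in $(a,b)$, and match. The first-order terms agree automatically, the pure $a^2,b^2$ terms are governed by the individual flows, and equating the mixed $ab$ coefficient produces $[\mathcal{L}_j,\mathcal{L}_k]$ on one side and exactly $\frac{4}{(x_k-x_j)^2}(\mathcal{L}_k-\mathcal{L}_j)$ on the other, which is \eqref{commutation relation for generators kappa=0}.

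For the second implication I would compute the bracket explicitly. Writing $a^j_l$ for the coefficient of $\partial_l$ in $\mathcal{L}_j$ (so $a^j_j = U_j$ and $a^j_l = \frac{2}{x_l-x_j}$ for $l\neq j$), one has $[\mathcal{L}_j,\mathcal{L}_k] = \sum_l\left(\mathcal{L}_j(a^k_l)-\mathcal{L}_k(a^j_l)\right)\partial_l$, and I would match this against $\frac{4}{(x_k-x_j)^2}(\mathcal{L}_k-\mathcal{L}_j)$ component by component. For $l\neq j,k$ both coefficients are explicit rational functions of $x_j,x_k,x_l$ and the identity holds unconditionally, a partial-fraction check whose numerator collapses to $2(x_l-x_j)(x_l-x_k)$. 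The content lies in the $l=j$ and $l=k$ components, which reduce respectively to $\mathcal{L}_k(U_j)=\frac{2U_j}{(x_j-x_k)^2}-\frac{12}{(x_j-x_k)^3}$ and its $j\leftrightarrow k$ partner. Introducing the null vector expression $N_j = \frac{1}{2}U_j^2 + \sum_{k\neq j}\frac{2}{x_k-x_j}U_k - \sum_{k\neq j}\frac{6}{(x_k-x_j)^2}$, I would then compute $\partial_k N_j$ for $k\neq j$; after replacing $\partial_k U_l$ by $\partial_l U_k$ via the closedness hypothesis, the first-order derivative terms collapse into $\mathcal{L}_j(U_k)$, yielding $\partial_k N_j = \mathcal{L}_j(U_k) - \frac{2U_k}{(x_k-x_j)^2} + \frac{12}{(x_k-x_j)^3}$. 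Thus the $l=k$ bracket component is precisely the statement $\partial_k N_j = 0$. Running this over all $k\neq j$ shows the bracket relations are equivalent to $N_j$ being independent of every $x_k$ with $k\neq j$, i.e. $N_j = h_j(x_j)$ for a smooth function of $x_j$ alone; this is the role of the functions $h_j$, and the displayed form \eqref{null vector equation for kappa 0} is the normalization $h_j\equiv 0$. The converse direction simply reverses this computation.

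I expect the main obstacle to be the capacity bookkeeping in the first implication: one must argue that geometric commutation genuinely reduces to a reparametrization governed solely by the leading factor of Corollary~\ref{gamma 1 gamma2 capacity change}, and then verify that the mixed second-order term assembles into exactly the combination $\mathcal{L}_k-\mathcal{L}_j$ with coefficient $\frac{4}{(x_k-x_j)^2}$, rather than some other element of $\mathrm{span}(\mathcal{L}_j,\mathcal{L}_k)$. The algebra of the second implication is routine by comparison; its only subtlety is noticing that the commutation relations never constrain $\partial_j N_j$, which is exactly why $N_j$ is pinned down only up to a function of $x_j$ and why the statement must be phrased through the auxiliary functions $h_j$.
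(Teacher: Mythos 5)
Your proposal is correct and follows essentially the same route as the proof this paper defers to (Theorem 5.1 of \cite{ABKM20}): the relation \eqref{commutation relation for generators kappa=0} is extracted from the mixed second-order term when comparing the two orders of growth, with the time reparametrization governed by the capacity rescaling of Corollary \ref{gamma 1 gamma2 capacity change}, and the equivalence with the null vector equations comes from the componentwise bracket computation in which the $\partial_l$ components for $l\notin\{j,k\}$ hold identically while the $l=j,k$ components, after invoking $\partial_j U_k=\partial_k U_j$, are exactly the statements $\partial_k N_j=0$, so that $N_j$ depends on $x_j$ alone. You also correctly resolve the one subtlety in the statement: the commutation relations only pin down $N_j$ up to a function $h_j(x_j)$, so the right-hand side of \eqref{null vector equation for kappa 0} should properly read $h_j(x_j)$ as in \cite{ABKM20}, with the displayed $=0$ being the normalization relevant to the systems built from stationary relations.
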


\section{Integrals of motion and the geometric characterization of Traces}

\subsection{Classical limit of multiple chordal SLE($\kappa$) system*} 
\label{classical limit of multiple chordal SLE system}
\indent

In this section, we construct multiple chordal SLE(0) systems by heuristically taking the classical limits of multiple chordal SLE($\kappa$) systems. Our construction of multiple chordal SLE(0) systems are self-consistent and does not depend on resolving the classical limit.

A key concept in this construction is the stationary relations that naturally emerge as a result of normalizing the partition functions.
For multiple chordal SLE($\kappa$) system, we have shown that the drift term $b_j(\boldsymbol{x})$ is given by 
$$b_j(\boldsymbol{x}) = \kappa \frac{\partial \log \mathcal{Z}(\boldsymbol{x})}{\partial x_j}$$
where $\mathcal{Z}(\boldsymbol{x})$ is a positive function satisfying the null vector equations (\ref{null vector equation for kappa 0}). 

As $\kappa \rightarrow 0$, we need to normalize the partition function. We expect that, for some suitably chosen partition functions, the limit $\mathcal{Z}(\boldsymbol{x})^{\kappa}$ exists as $\kappa \rightarrow 0$.

Recall that the chordal ground solution is given by the multiple contour integral:
\begin{equation}
\mathcal{J}_{\alpha}(\boldsymbol{x})=\oint_{\mathcal{C}_1} \ldots \oint_{\mathcal{C}_n} \Phi_\kappa(\boldsymbol{x}, \boldsymbol{\xi}) d \xi_m \ldots d \xi_1 .
\end{equation}
where $\Phi_\kappa(\boldsymbol{x}, \boldsymbol{\xi})$ is the multiple chordal SLE($\kappa$) master function, and  $\mathcal{C}_1,\mathcal{C}_2,\ldots,\mathcal{C}_m$ are non-intersecting Pochhammer contours. Pure partition functions $\mathcal{Z}_{\alpha}(\boldsymbol{x})$ are linear combinations of chordal ground solutions $\mathcal{J}_{\alpha}(\boldsymbol{x})$. Therefore, heuristically

\begin{equation}
\lim_{\kappa \rightarrow 0}\mathcal{Z}_\alpha(\boldsymbol{x})^{\kappa}= \lim_{\kappa \rightarrow 0}\left(\oint_{\mathcal{C}_1} \ldots \oint_{\mathcal{C}_m} \Phi(\boldsymbol{x}, \boldsymbol{\xi})^{\frac{1}{\kappa}}d\boldsymbol{\xi}\right)^{\kappa}
\end{equation}
where $\Phi(\boldsymbol{x},\boldsymbol{\xi})$ is the multiple chordal SLE(0) master function, see definition (\ref{multiple chordal SLE(0) master function}).
By the steepest decent method,
the contour integral $\oint_{\mathcal{C}_1} \ldots \oint_{\mathcal{C}_m} \Phi(\boldsymbol{x}, \boldsymbol{\xi})^{\frac{1}{\kappa}}d\boldsymbol{\xi}$ is approximated by the value of $\Phi(\boldsymbol{x}, \boldsymbol{\xi})$ at the stationary phase, i.e., the critical points of $\Phi(\boldsymbol{x}, \boldsymbol{\xi})$.

\begin{conjecture} We conjecture that for pure partition function $\mathcal{Z}_{\alpha}(\boldsymbol{z},u)$ and $\mathcal{Z}_{\alpha}(\boldsymbol{x})$
\begin{itemize}
    \item As $\kappa \rightarrow 0$, the limit $\underset{\kappa \rightarrow 0}{\lim}\mathcal{Z}_{\alpha}(\boldsymbol{z},u)^{\kappa}$ exist and concentrate on critical points of the master function. i.e.
\begin{equation}
\underset{\kappa \rightarrow 0}{\lim}\mathcal{Z}_{\alpha}(\boldsymbol{z})^{\kappa}=  \Phi(\boldsymbol{x}, \boldsymbol{\xi},u)     
\end{equation}   
 where $\boldsymbol{\xi}$ is a critical point of the multiple chordal SLE(0) master function $\Phi(\boldsymbol{x}, \boldsymbol{\xi},u)$. 

\item In the special case $u=\infty$, as $\kappa \rightarrow 0$, the limit  $\underset{\kappa \rightarrow 0}{\lim}\mathcal{Z}_{\alpha}^{\kappa}(\boldsymbol{x}) $ exist and concentrate on critical points of the master function. i.e.

 \begin{equation}
\underset{\kappa \rightarrow 0}{\lim}\mathcal{Z}_{\alpha}(\boldsymbol{x})^{\kappa}=  \Phi(\boldsymbol{x}, \boldsymbol{\xi})     
\end{equation}   
 where $\boldsymbol{\xi}$ is a critical point of the multiple chordal SLE(0) master function $\Phi(\boldsymbol{x}, \boldsymbol{\xi})$. 
\end{itemize}
\end{conjecture}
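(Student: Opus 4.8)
The plan is to prove the conjecture by a multidimensional Laplace (steepest descent) analysis of the screening contour integrals in the large parameter $1/\kappa \to \infty$. First I would reduce to a single chordal ground solution: since each pure partition function $\mathcal{Z}_\alpha$ is a finite linear combination of the $\mathcal{J}_\alpha(\boldsymbol{x}) = \oint_{\mathcal{C}_1}\cdots\oint_{\mathcal{C}_m}\Phi_\kappa(\boldsymbol{x},\boldsymbol{\xi})\,d\boldsymbol{\xi}$, and since $A^\kappa \to 1$ for any fixed $A>0$, the limit of $\mathcal{Z}_\alpha^\kappa$ is governed by the term of largest modulus, provided no exact cancellation occurs. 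I would therefore analyze one contour integral and track which saddle it selects. Writing $\Phi_\kappa = \Phi^{1/\kappa}$ up to subexponential corrections, the integrand becomes $\exp\bigl(\tfrac1\kappa S(\boldsymbol{\xi})\bigr)$ with phase $S(\boldsymbol{\xi}) = \log\Phi(\boldsymbol{x},\boldsymbol{\xi})$.

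Then I would apply the stationary phase method in the $m$ screening variables. The saddle-point equations $\nabla_{\boldsymbol{\xi}} S = 0$ are precisely the stationary relations of Definition~\ref{Stationary relations}, so the critical points are exactly the configurations $\boldsymbol{\xi}^*$ produced by Theorem~\ref{stationary relations rational functions}. Deforming the product of Pochhammer contours onto a product of steepest descent cycles through the relevant saddle $\boldsymbol{\xi}^*=\boldsymbol{\xi}^*(\alpha)$ gives the leading asymptotic
\begin{equation*}
\mathcal{J}_\alpha(\boldsymbol{x}) \sim \Phi(\boldsymbol{x},\boldsymbol{\xi}^*)^{1/\kappa}\,(2\pi\kappa)^{m/2}\bigl(\det(-\mathrm{Hess}\,S)(\boldsymbol{\xi}^*)\bigr)^{-1/2}\bigl(1+O(\kappa)\bigr).
\end{equation*}
Raising to the power $\kappa$ and using $\bigl(c\,\kappa^{m/2}\bigr)^{\kappa}=\exp\bigl(\kappa\log c+\tfrac{m}{2}\kappa\log\kappa\bigr)\to 1$ as $\kappa\to 0$, all subexponential prefactors disappear and I obtain $\lim_{\kappa\to0}\mathcal{Z}_\alpha(\boldsymbol{x})^\kappa = \Phi(\boldsymbol{x},\boldsymbol{\xi}^*)$, with $\boldsymbol{\xi}^*$ a critical point of the master function, exactly as claimed; the case $u=\infty$ is identical, using the simplified stationary relations.

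The crux is to match the homology class of the chosen Pochhammer contours with the saddle $\boldsymbol{\xi}^*(\alpha)$ realizing the link pattern $\alpha$. I would argue that the pure partition function is characterized among solutions of the null-vector system by its boundary (fusion) asymptotics, and that in the $\kappa\to0$ regime these asymptotics select the conjugation-symmetric real critical point whose associated rational function in $\mathcal{CR}_{m,n}(\boldsymbol{x})$ exhibits the connectivity $\alpha$. Here the enumeration of critical points of the KZ master function from~\cite{EG02,SV03} guarantees that such a real critical point exists and that the relevant ones are nondegenerate, so that $\det(-\mathrm{Hess}\,S)\neq 0$ and the Laplace expansion is valid to leading order.

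The main obstacle I expect is the rigorous justification of the multidimensional steepest descent for Pochhammer contours, where $\Phi^{1/\kappa}$ carries branch points along the diagonals $\xi_k = x_j$ and $\xi_k = \xi_l$. One must show that the product of Pochhammer cycles can be deformed, without crossing these singularities, onto a sum of Lefschetz thimbles in which a single thimble through $\boldsymbol{\xi}^*(\alpha)$ dominates, and that no Stokes phenomenon or coalescence of saddles spoils the asymptotic along the deformation. Controlling cancellations between nearly-equal-modulus saddles, and thereby proving that the limit genuinely \emph{exists} rather than merely being bounded, is the delicate point, and is precisely where the global structure of the critical-point enumeration must be invoked.
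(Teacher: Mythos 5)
You should first note that the paper does not prove this statement at all: it is stated as a \emph{conjecture}, and the surrounding text (Section~\ref{classical limit of multiple chordal SLE system}, marked with an asterisk as heuristic) presents precisely the argument you outline --- writing $\mathcal{Z}_\alpha$ as a combination of screening integrals $\mathcal{J}_\alpha$, identifying $\Phi_\kappa=\Phi^{1/\kappa}$, and invoking steepest descent so that the integral concentrates on critical points of the master function, which are the stationary relations of Definition~\ref{Stationary relations}. So your proposal reproduces the paper's motivating heuristic rather than supplying something the paper already has; the question is whether you have closed the gap that made the authors call it a conjecture, and you have not. Every genuinely hard step is named in your last two paragraphs and then deferred: the deformation of products of Pochhammer cycles onto Lefschetz thimbles, single-thimble dominance, absence of Stokes jumps, and control of cancellations are exactly the content of the conjecture, not preliminaries to it.

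Two of these gaps deserve to be made concrete, because as stated your intermediate claims are not merely unproved but likely wrong in the naive form. First, saddle selection: your reduction ``the limit of $\mathcal{Z}_\alpha^\kappa$ is governed by the term of largest modulus'' would force the limit to be $\max_{\boldsymbol{\xi}^*}\Phi(\boldsymbol{x},\boldsymbol{\xi}^*)$ over \emph{all} critical points, independent of the link pattern $\alpha$ --- contradicting the conjecture, which asserts concentration on the particular critical point $\boldsymbol{\xi}^*(\alpha)$ realizing $\alpha$. To get the $\alpha$-dependent answer you need the specific linear combination defining the pure partition function to cancel the contributions of all other thimbles exactly at exponential order; this requires knowing the transition matrix between the Pochhammer homology basis and the thimble basis (equivalently, resolving the fusion asymptotics uniformly as $\kappa\to0$), which neither you nor the paper establishes. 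Second, monodromy prefactors: a Pochhammer loop around a branch point with exponent of order $1/\kappa$ (e.g.\ $-4/\kappa$ at $\xi_k=x_j$) contributes factors of the form $\bigl(1-e^{\pm 8\pi i/\kappa}\bigr)$, which oscillate and \emph{vanish} along sequences $\kappa_n\to0$; raising to the power $\kappa$ does not repair a zero, so even the existence of the limit along all of $\kappa\to0$ (rather than a subsequence) is not secured by the Laplace expansion. Your appeal to~\cite{SV03} for nondegeneracy of the Hessian is sound in the underscreening regime $m\le\lfloor n/2\rfloor$, but it addresses the easiest of the obstacles. In short: the approach is the right one and matches the paper's intent, but what you have written is a proof plan with the conjecture's actual difficulty left intact.
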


The \textbf{stationary relations} correspond to the critical points of $\Phi(\boldsymbol{z}, \boldsymbol{\xi})$, known as the multiple chordal SLE(0) master functions, or the master function for the Knizhnik-Zamolodchikov (KZ) equations.

\begin{defn}[Multiple chordal SLE(0) master function]\label{multiple chordal SLE(0) master function}
For growth points $\boldsymbol{z}=\{z_1,z_2,\ldots,z_n\}$, a marked point $u$ and force points $\boldsymbol{\xi}=\{\xi_1,\xi_2,\ldots,\xi_m\}$. 
We define the Coulomb gas partition function by
\begin{equation}
\begin{aligned}
 \Phi(\boldsymbol{z}, \boldsymbol{\xi})= 
 & 
 \prod_{1 \leq i<j \leq n}(z_i-z_j)^{2} \prod_{1 \leq i<j \leq m}(\xi_i-\xi_j)^{8} \prod_{i=1}^{n} \prod_{j=1}^m\left(z_i-\xi_j\right)^{-4} \\
 &
 \prod_{1 \leq i \leq n}
 (z_{i}-u)^{2(-2-n+2m)}\prod_{1 \leq j \leq n}(\xi_{j}-u)^{4(2+n-2m)}    
\end{aligned}
\end{equation}

when $u=\infty$, the master function is simplified as 
\begin{equation}
    \Phi(\boldsymbol{z}, \boldsymbol{\xi})= \prod_{1 \leq i<j \leq n}(z_i-z_j)^{2} \prod_{1 \leq i<j \leq m}(\xi_i-\xi_j)^{8} \prod_{i=1}^{n} \prod_{j=1}^m\left(z_i-\xi_j\right)^{-4}
\end{equation}

\end{defn}

\begin{thm}
    The stationary relation between $\boldsymbol{x}$ and $\boldsymbol{\xi}$ is equivalent to $\boldsymbol{\xi}$ is the critical point of the master function  $\Phi(\boldsymbol{z}, \boldsymbol{\xi})$.
\end{thm}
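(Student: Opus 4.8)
The claim is that $\boldsymbol{\xi}$ solves the stationary relations if and only if it is a critical point of the master function $\Phi(\boldsymbol{z},\boldsymbol{\xi})$ in the screening-charge variables (with the growth points $\boldsymbol{z}$ held fixed). The plan is a direct logarithmic-derivative computation. The master function is a product of powers of the pairwise differences $z_i-z_j$, $\xi_i-\xi_j$, $z_i-\xi_j$ (and, in the finite-$u$ case, of $z_i-u$ and $\xi_j-u$), hence it is real-analytic and nonvanishing away from the collision locus where two of the marked points coincide. On this locus one has $\partial_{\xi_k}\Phi=\Phi\cdot\partial_{\xi_k}\log\Phi$, so because $\Phi\neq 0$ the condition $\partial_{\xi_k}\Phi=0$ for every $k$ is equivalent to $\partial_{\xi_k}\log\Phi=0$ for every $k$. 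It therefore suffices to compute $\partial_{\xi_k}\log\Phi$ and match it termwise with the left-hand side of the stationary relation.

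First I would treat the case $u=\infty$, where
\[
\log\Phi = 2\sum_{i<j}\log(z_i-z_j) + 8\sum_{i<j}\log(\xi_i-\xi_j) - 4\sum_{i=1}^{n}\sum_{j=1}^{m}\log(z_i-\xi_j).
\]
Differentiating in $\xi_k$: the first sum drops out; the antisymmetric double product over the $\xi$'s contributes a term for each pair containing the index $k$, and collecting the cases where $\xi_k$ appears as the smaller and as the larger index yields $8\sum_{l\neq k}\tfrac{1}{\xi_k-\xi_l}$; the mixed product contributes $-4\sum_{i}\tfrac{-1}{z_i-\xi_k}=-4\sum_{i}\tfrac{1}{\xi_k-z_i}$. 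Hence $\partial_{\xi_k}\log\Phi = 8\sum_{l\neq k}\tfrac{1}{\xi_k-\xi_l}-4\sum_{j}\tfrac{1}{\xi_k-x_j}$, and dividing the equation $\partial_{\xi_k}\log\Phi=0$ by $4$ reproduces exactly the simplified stationary relation.

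For finite $u$ the two extra factors $\prod_i(z_i-u)^{2(-2-n+2m)}$ and $\prod_j(\xi_j-u)^{4(2+n-2m)}$ enter: the first is independent of $\xi_k$ and drops out, while the second adds $4(2+n-2m)\tfrac{1}{\xi_k-u}$. Thus $\partial_{\xi_k}\log\Phi=0$ reads $8\sum_{l\neq k}\tfrac{1}{\xi_k-\xi_l}-4\sum_{j}\tfrac{1}{\xi_k-x_j}+4(2+n-2m)\tfrac{1}{\xi_k-u}=0$, and dividing by $2$ gives the general stationary relation once one observes $2(2+n-2m)=2n-4m+4$. This identification is the whole content of the theorem, and both implications follow simultaneously since the computation is an equivalence at each step.

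The computation is routine, so the only points requiring care are bookkeeping ones. First, when differentiating the antisymmetric product $\prod_{i<j}(\xi_i-\xi_j)^{8}$ one must account for $\xi_k$ occurring both as the smaller and as the larger index, so that the two contributions combine into a single unrestricted sum $\sum_{l\neq k}\tfrac{1}{\xi_k-\xi_l}$ rather than $\sum_{l>k}$. Second, one must verify the arithmetic $2(2+n-2m)=2n-4m+4$ matching the exponent $4(2+n-2m)$ on the $(\xi_j-u)$ factor with the coefficient of $\tfrac{1}{\xi_k-u}$ in the stationary relation. Finally, it should be recorded explicitly that ``critical point'' here means critical with respect to the $\boldsymbol{\xi}$ variables alone (the $\boldsymbol{z}$ being held fixed), and that the index in the $(\xi_j-u)$ product runs $1\le j\le m$.
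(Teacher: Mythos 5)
Your proposal is correct and follows essentially the same route as the paper: a direct computation of $\partial_{\xi_k}\log\Phi$ term by term, identified (after dividing by a constant) with the left-hand side of the stationary relations, in both the finite-$u$ and $u=\infty$ cases. Your additional bookkeeping remarks (nonvanishing of $\Phi$ off the collision locus, the unrestricted sum over $l\neq k$, and the index range $1\le j\le m$ in the $(\xi_j-u)$ factor) are all consistent with, and slightly more careful than, the paper's one-line verification.
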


\begin{proof} By direct computing the partial derivative with respect to $\xi_k$, we obtain that:
    \begin{equation}
        \frac{\partial \Phi(\boldsymbol{z}, \boldsymbol{\xi})}{\partial \xi_k}= 2\left(-\sum_{j=1}^{n} \frac{2}{\xi_k-z_j}+\sum_{l \neq k} \frac{4}{\xi_k-\xi_l}+ \frac{2n-4m+4}{\xi_k-u}\right)\Phi(\boldsymbol{z}, \boldsymbol{\xi})
    \end{equation}
    Thus the stationary relations are equivalent to 
    \begin{equation}
    \frac{\partial \Phi(\boldsymbol{z}, \boldsymbol{\xi})}{\partial \xi_k}=0, \quad k=1,2,\ldots,n
    \end{equation}
\end{proof}

\begin{thm}[Conformal invariance of stationary relations] 
The stationary relations are preserved under the M\"obius transformation ${\rm Aut}(\mathbb{H})$.
\end{thm}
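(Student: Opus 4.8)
The plan is to reduce to the generators of $\mathrm{Aut}(\mathbb{H}) = \mathrm{PSL}(2,\mathbb{R})$, namely translations $z \mapsto z+b$ ($b \in \mathbb{R}$), dilations $z \mapsto az$ ($a>0$), and the inversion $\iota(z) = -1/z$; since invariance is preserved under composition, it suffices to treat each generator separately. Two of the three are immediate. The stationary relations of Definition \ref{Stationary relations} involve only the differences $\xi_k - x_j$, $\xi_k - \xi_l$, and $\xi_k - u$, so a translation (which moves $u$ along with the other points) leaves every such difference unchanged and the relation is trivially invariant; a dilation multiplies every difference by the common factor $a$, hence multiplies the whole left-hand side by $a^{-1}$, so its vanishing is preserved.

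The substance lies in the inversion. Writing $\tilde{w} = \iota(w) = -1/w$ for each marked point, I would use the elementary identity
\[
\frac{1}{\tilde{\xi}_k - \tilde{w}} = \frac{1}{-\xi_k^{-1} + w^{-1}} = \frac{\xi_k w}{\xi_k - w} = -\xi_k + \frac{\xi_k^2}{\xi_k - w},
\]
valid for $w \in \{x_j,\ \xi_l,\ u\}$. Substituting this into the stationary relation written for the transformed configuration $\{\tilde{x}_j\}$, $\{\tilde{\xi}_k\}$, $\tilde{u}$ splits the left-hand side into two pieces. The terms carrying the factor $\xi_k^2$ reassemble, after extracting $\xi_k^2$, into exactly the original stationary relation at $\xi_k$, which vanishes by hypothesis. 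The remaining terms are the constant contributions $-\xi_k$ from each summand; collecting them gives $\xi_k$ times
\[
2n - 4(m-1) - (2n - 4m + 4) = 0,
\]
where the three contributions come respectively from the $n$ growth-point terms (coefficient $2$), the $m-1$ other screening-charge terms (coefficient $4$), and the single $u$-term (coefficient $2n-4m+4$). Hence the transformed relation holds, establishing inversion invariance.

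I expect the only genuine obstacle to be the bookkeeping in this last cancellation: one must track the exact number of terms in each sum and verify that the linear-in-$\xi_k$ contributions cancel. This cancellation is not accidental — it is forced by, and in turn pins down, the precise coefficient $2n - 4m + 4 = 2(n-2m+2)$ of the $u$-term (equivalently, the pole order $n-2m+1$ at infinity in the class $\mathcal{CR}_{m,n}(\boldsymbol{x})$). It is worth recording this as the conceptual heart of the statement: the weights appearing in the master function $\Phi(\boldsymbol{z},\boldsymbol{\xi})$ are tuned precisely so that its critical locus in $\boldsymbol{\xi}$ is $\mathrm{PSL}(2,\mathbb{R})$-covariant.

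Finally, I would address the degenerate configurations in which a Möbius map sends $u$ (or a finite point) to $\infty$, for instance $\iota(0) = \infty$. In that case the general relation of Definition \ref{Stationary relations} passes to its $u=\infty$ form, which is consistent because the latter is exactly the $u \to \infty$ limit of the former (the $u$-term drops out). Thus the two displayed forms of the stationary relations are interchanged appropriately under such maps, and conformal invariance holds on all of $\mathbb{R} \cup \{\infty\}$.
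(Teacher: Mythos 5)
Your proof is correct, but it takes a genuinely different route from the paper's. The paper settles the statement in one line: the stationary relations are exactly the critical-point equations $\partial_{\xi_k}\Phi=0$ for the master function $\Phi(\boldsymbol{z},\boldsymbol{\xi})$, which is a Coulomb gas correlation function; such correlations transform covariantly under M\"obius maps (each marked point, including $u$ and $\infty$, carries a conformal weight, so $\Phi$ is multiplied by nonvanishing derivative prefactors), and covariance of $\Phi$ immediately gives covariance of its critical locus in $\boldsymbol{\xi}$. You instead verify invariance by hand on the generators of $\mathrm{PSL}(2,\mathbb{R})$, with all the content in the inversion: your identity $1/(\tilde{\xi}_k-\tilde{w})=-\xi_k+\xi_k^2/(\xi_k-w)$ splits the transformed relation into $\xi_k^2$ times the original relation plus $\xi_k\bigl[2n-4(m-1)-(2n-4m+4)\bigr]=0$, and your term count ($n$ growth points with coefficient $2$, the $m-1$ other screening charges with coefficient $4$, one $u$-term) is accurate --- note in passing that the relations are indexed $k=1,\dots,m$, one per screening charge, so your count implicitly corrects the paper's ``$k=1,\dots,n$'' typo. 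What your computation buys is an elementary, self-contained verification that isolates exactly why the coefficient $2n-4m+4$ is forced; your closing observation that the charges of $\Phi$ are tuned so that its critical locus is $\mathrm{PSL}(2,\mathbb{R})$-covariant is precisely the conceptual content of the paper's one-line appeal, so the two arguments reconcile. What the paper's approach buys is brevity and generality: no generator-by-generator bookkeeping, and it applies verbatim to any charge-balanced Coulomb gas correlation. One small point to tidy in your write-up: the inversion identity requires that none of $\xi_k$, $x_j$, $u$ sit at $0$ (the pole of $\iota$); beyond the $u\mapsto\infty$ degeneration you discuss, you should either pre-compose with a generic translation or invoke real-analytic dependence of both sides on the configuration to extend by continuity.
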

\begin{proof}
The conformal invariance of stationary relations naturally follows from the conformal invariance of the Coulomb gas correlation.
\end{proof}

\subsection{Stationary relations imply commutation relations in $\kappa=0$ case} 
\label{Stationary relations imply commutation}
\indent

In definition (\ref{multiple chordal SLE(0) Loewner chain via stationary relations}), we define the multiple radial SLE(0) through stationary relations. In this section , we show that the stationary relations between \( \boldsymbol{\xi} \) and \( \boldsymbol{x} \) naturally imply the existence of partition functions that satisfy the null vector equations for \( \kappa = 0 \). Thus, our special construction of multiple radial SLE(0) systems fits into the general framework of multiple radial SLE(0) systems.

\begin{thm}[See theorem (2.8) in \cite{ABKM20}]
\label{thm stationary relations imply commutation} 
For distinct boundary points $\boldsymbol{x}$ and $\boldsymbol{\xi}$ smoothly depending on $\boldsymbol{x}$ by solving the stationary relations, we define

\begin{equation}
\mathcal{Z}(\boldsymbol{x}):=\prod_{1 \leq j<k \leq  n}\left(x_j-x_k\right)^2 \prod_{1 \leq l<m \leq n}\left(\xi_{l}(\boldsymbol{x})-\xi_{m}(\boldsymbol{x})\right)^8 \prod_{k=1}^{ n} \prod_{l=1}^n\left(x_k-\xi_l(\boldsymbol{x})\right)^{-4}    
\end{equation}

Then $\mathcal{Z}(\boldsymbol{x})$ is strictly positive, and is a $(3,0)$-differential at each coordinate of $\boldsymbol{x}$ with respect to ${\rm Aut}(\mathbb{H},\infty)$. Moreover

\begin{equation} \label{Uj in x xi}
 U(\boldsymbol{x})=\partial_{x_j} \log \mathcal{Z}(\boldsymbol{x})=\sum_{k \neq j} \frac{2}{x_j-x_k}-\sum_{k=1}^n \frac{4}{x_j-\xi_k(\boldsymbol{x})}, \quad j=1, \ldots, n   
\end{equation}

are real-valued, translation invariant, homogeneous of degree -1 , and satisfy the system of null vector equations (\ref{commutation for kappa=0}).
\end{thm}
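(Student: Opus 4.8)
The plan is to treat $\mathcal{Z}(\boldsymbol{x})$ as the restriction $\Phi(\boldsymbol{x},\boldsymbol{\xi}(\boldsymbol{x}))$ of the master function of Definition~\ref{multiple chordal SLE(0) master function} to the critical locus cut out by the stationary relations, and to exploit this in every part of the argument. Strict positivity follows from the conjugation symmetry of the configuration: since $\boldsymbol{x}\subset\mathbb{R}$ and $\boldsymbol{\xi}$ is closed under complex conjugation, $\mathcal{Z}$ is real, and a factor-by-factor sign analysis shows each group of factors is positive. Indeed $\prod_{j<k}(x_j-x_k)^2>0$ is a square of reals; $\prod_{l<l'}(\xi_l-\xi_{l'})^8=\bigl(\prod_{l<l'}(\xi_l-\xi_{l'})^2\bigr)^4$ is a fourth power of the (real) discriminant; and $\prod_{k,l}(x_k-\xi_l)^{-4}$ is positive after grouping each non-real $\xi_l$ with its conjugate, since $(x_k-\xi_l)(x_k-\bar\xi_l)=|x_k-\xi_l|^2>0$ while a real $\xi_l$ contributes $(x_k-\xi_l)^4>0$. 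For the covariance I would use that $\Phi$, being a product of powers of differences, has a definite joint covariance in all $n+m$ of its arguments; combined with the conformal invariance of the stationary relations established above, which gives $\boldsymbol{\xi}(\mu\boldsymbol{x})=\mu\,\boldsymbol{\xi}(\boldsymbol{x})$ for $\mu\in{\rm Aut}(\mathbb{H},\infty)$, this transfers to $\mathcal{Z}(\mu\boldsymbol{x})=\Phi(\mu\boldsymbol{x},\mu\,\boldsymbol{\xi}(\boldsymbol{x}))$, and tracking the exponents identifies the $(3,0)$-weight at each finite growth point, the residual scaling being carried by the marked point $u=\infty$.

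The formula for $U_j$ is where stationarity first does real work, via the \emph{envelope principle}. Writing $\mathcal{Z}(\boldsymbol{x})=\Phi(\boldsymbol{x},\boldsymbol{\xi}(\boldsymbol{x}))$ and applying the chain rule,
\begin{equation*}
\partial_{x_j}\log\mathcal{Z}=\bigl(\partial_{x_j}\log\Phi\bigr)+\sum_{l}\bigl(\partial_{\xi_l}\log\Phi\bigr)\,\partial_{x_j}\xi_l ,
\end{equation*}
and the second sum vanishes identically because the stationary relations are precisely $\partial_{\xi_l}\Phi=0$. Differentiating the explicit product then gives
\begin{equation*}
U_j=\partial_{x_j}\log\mathcal{Z}=\sum_{k\neq j}\frac{2}{x_j-x_k}-\sum_{l}\frac{4}{x_j-\xi_l}.
\end{equation*}
Reality of $U_j$ follows by pairing each non-real $\xi_l$ with $\bar\xi_l$, so that $\tfrac{1}{x_j-\xi_l}+\tfrac{1}{x_j-\bar\xi_l}\in\mathbb{R}$; translation invariance and homogeneity of degree $-1$ are immediate from the fact that $U_j$ depends only on the differences $x_j-x_k$ and $x_j-\xi_l$, together with the covariance of $\boldsymbol{\xi}$ under translations and scalings.

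The main content is the system of null vector equations \eqref{null vector equation for kappa 0}, which I would derive by a quadratic-differential argument. Introduce the rational field
\begin{equation*}
\omega(z)=\sum_{k=1}^{n}\frac{2}{z-x_k}-\sum_{l=1}^{m}\frac{4}{z-\xi_l},
\end{equation*}
with Laurent expansions $\omega(z)=\tfrac{2}{z-x_j}+U_j+O(z-x_j)$ at $x_j$ and $\omega(z)=\tfrac{-4}{z-\xi_l}+V_l+O(z-\xi_l)$ at $\xi_l$, where $V_l=\sum_k\tfrac{2}{\xi_l-x_k}-\sum_{l'\neq l}\tfrac{4}{\xi_l-\xi_{l'}}$. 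The crucial observation is that the stationary relations are exactly the statements $V_l=0$. Setting
\begin{equation*}
\mathcal{T}(z)=\tfrac12\,\omega(z)^2-2\,\omega'(z),
\end{equation*}
the double poles at $\xi_l$ cancel ($\tfrac12\cdot16-2\cdot4=0$), and $V_l=0$ kills the simple pole, so $\mathcal{T}$ is holomorphic at every screening charge. Being rational with poles only at the $x_j$ and decaying like $O(z^{-2})$ at infinity, $\mathcal{T}$ equals the sum of its principal parts,
\begin{equation*}
\mathcal{T}(z)=\sum_{j}\left(\frac{6}{(z-x_j)^2}+\frac{2U_j}{z-x_j}\right).
\end{equation*}
I would then compare the constant term of the Laurent expansion of $\mathcal{T}$ at $x_j$ two ways: directly from $\tfrac12\omega^2-2\omega'$ the linear coefficients cancel and the constant term is $\tfrac12U_j^2$, while from the partial-fraction form it equals $\sum_{i\neq j}\bigl(\tfrac{6}{(x_j-x_i)^2}+\tfrac{2U_i}{x_j-x_i}\bigr)$; equating the two yields exactly \eqref{null vector equation for kappa 0}. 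The conceptual crux, and the main obstacle, is the choice of $\mathcal{T}$ together with the recognition that it is pole-free at the $\xi_l$: since $\omega$ has only a simple pole there, $\mathcal{T}$ has at worst a double pole, the coefficient $-2$ is forced by cancelling that double pole, and the vanishing of the simple pole is precisely $V_l=0$. The same coefficient $-2$ simultaneously produces the coefficient $6$ (the value of $6-\kappa$ at $\kappa=0$) of the double pole at each $x_j$, which is what makes the null vector equation emerge with the correct constant.
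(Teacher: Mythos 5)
Your proposal is correct, and for the main part (the null vector equations) it takes a genuinely different route from the paper. The paper, following Theorem 2.8 of \cite{ABKM20}, proceeds purely algebraically: it substitutes $u_j := U_j - \sum_{k\neq j}\frac{2}{x_j-x_k}$, reduces the null vector system to the identity $\frac12 u_j^2 + 2\sum_{k\neq j}\frac{u_j-u_k}{x_j-x_k}=0$, and verifies this by partial-fraction manipulations, invoking the stationary relations twice; the covariance properties are obtained from the rational-function characterization (if $R\in\mathcal{CR}_{m,n}(\boldsymbol{x})$ then $R(z-h)\in\mathcal{CR}_{m,n}(\boldsymbol{x}+h)$ and $rR(r^{-1}z)\in\mathcal{CR}_{m,n}(r\boldsymbol{x})$, whence $\xi_k(\boldsymbol{x}+h)=\xi_k(\boldsymbol{x})+h$ and $\xi_k(r\boldsymbol{x})=r\xi_k(\boldsymbol{x})$). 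You instead build the global rational function $\mathcal{T}(z)=\frac12\omega(z)^2-2\omega'(z)$ with $\omega(z)=\sum_k\frac{2}{z-x_k}-\sum_l\frac{4}{z-\xi_l}$ (note $\omega=2(\log R')'$), observe that stationarity is exactly the vanishing of the residues $V_l$ at the screening charges while the double poles there cancel numerically, and then identify $\mathcal{T}$ with the sum of its principal parts at the $x_j$ by Liouville; comparing constant Laurent coefficients at $x_j$ yields \eqref{null vector equation for kappa 0} with the correct coefficient $6$. I checked the local expansions: at $x_j$ one gets $\mathcal{T}=\frac{6}{(z-x_j)^2}+\frac{2U_j}{z-x_j}+\frac12U_j^2+O(z-x_j)$ with the first-order coefficient $b_j$ of $\omega$ cancelling between $\frac12\omega^2$ and $-2\omega'$, and at $\xi_l$ the coefficient count $\frac12\cdot 16-2\cdot 4=0$ is right, so the argument is sound. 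Your route buys conceptual clarity (it explains where $6=6-\kappa|_{\kappa=0}$ comes from, parallels the paper's integral of motion $N_t(z)$, and gives the Ward identity $\sum_j U_j=0$ for free from the $O(z^{-2})$ decay), while the paper's computation is more elementary and self-contained. Your envelope-principle derivation of \eqref{Uj in x xi} via $\partial_{\xi_l}\Phi=0$ — legitimate since $\Phi$ is holomorphic in each $\xi_l$ and $\boldsymbol{\xi}(\boldsymbol{x})$ is assumed smooth — is also cleaner than the paper, which asserts the formula, and your positivity and weight bookkeeping (with the residual scaling $(n-2m)^2-n-4m$ carried by $u=\infty$, matching the second Ward identity) fills in details the paper leaves implicit.
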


\begin{thm}[Ward's identities]
\label{ward identities kappa 0}
For distinct real points $\boldsymbol{x}$, the functions $U_{j}$  satisfy the conformal Ward identities

$$
\sum_{j=1}^{n} U_{j}=0, \quad \sum_{j=1}^{n} x_j U_{ j}=(n-2m)^2-n-4m
$$
\end{thm}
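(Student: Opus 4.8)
The plan is to derive the two Ward identities directly from the explicit formula for $U_j$ given in Theorem~\ref{thm stationary relations imply commutation}, namely
\[
U_j = \sum_{k \neq j} \frac{2}{x_j - x_k} - \sum_{l=1}^{m} \frac{4}{x_j - \xi_l},
\]
treating the stationary relations as the crucial algebraic constraint linking the $\xi_l$ to the $x_j$. The first identity $\sum_j U_j = 0$ should follow purely from antisymmetry: I would sum the formula over $j$, observe that the double sum $\sum_j \sum_{k\neq j} \frac{2}{x_j-x_k}$ vanishes by pairing $(j,k)$ with $(k,j)$, and then invoke the stationary relations (in the $u=\infty$ form) to kill the remaining $x$--$\xi$ interaction term. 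Indeed, summing the $u=\infty$ stationary relation $-\sum_j \frac{1}{\xi_l - x_j} + \sum_{l'\neq l}\frac{2}{\xi_l - \xi_{l'}} = 0$ over $l$ and using antisymmetry of the $\xi$--$\xi$ sum shows $\sum_{j,l}\frac{1}{x_j - \xi_l} = 0$, which is exactly what is needed.

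For the second identity I would compute $\sum_j x_j U_j$ by substituting the formula and handling each piece as a symmetric/antisymmetric rational sum. The key combinatorial devices are the standard identities $\sum_{j\neq k}\frac{x_j}{x_j-x_k} = \binom{n}{2}$ (obtained from symmetrizing $\frac{x_j + x_k}{x_j - x_k}$, whose antisymmetric part cancels) and the analogous mixed-sum manipulation for the $x$--$\xi$ term. Writing $x_j = (x_j - \xi_l) + \xi_l$ in the cross term lets me split $\sum_{j,l} \frac{x_j}{x_j-\xi_l}$ into a constant $nm$ plus $\sum_l \xi_l \sum_j \frac{1}{x_j - \xi_l}$, and the inner sum is pinned down by the stationary relation as $\sum_j \frac{1}{\xi_l - x_j} = \sum_{l'\neq l}\frac{2}{\xi_l - \xi_{l'}}$. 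The residual $\xi$--$\xi$ double sum $\sum_l \xi_l \sum_{l'\neq l}\frac{2}{\xi_l - \xi_{l'}}$ again simplifies by antisymmetrization to a pure count $2\binom{m}{2} = m(m-1)$. Collecting the constants $\binom{n}{2}\cdot 2 = n(n-1)$ from the $x$--$x$ term, $-4nm$ and $+4m(m-1)$ from the reorganized cross and $\xi$--$\xi$ contributions, I expect the total to collapse to a polynomial in $n$ and $m$ that I would then match against the claimed value $(n-2m)^2 - n - 4m$.

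The main obstacle I anticipate is bookkeeping consistency rather than any deep idea: the paper uses $m$ and $n$ interchangeably as the upper index on the $\xi$-products in several displayed formulas (the Loewner chain definition sums screening charges up to $m$, but Theorem~\ref{thm stationary relations imply commutation} and the master function write $\prod_{l<m\le n}$ and $\sum_{k=1}^n$), so I must fix the convention that there are exactly $m$ screening charges $\xi_1,\dots,\xi_m$ and that the coefficient $2n-4m+4$ in the general stationary relation degenerates correctly at $u=\infty$. I would verify the arithmetic by checking that $n(n-1) - 4nm + 4m(m-1) = n^2 - 4nm + 4m^2 - n - 4m = (n-2m)^2 - n - 4m$, which confirms the target exactly; this matching is the sanity check that the convention and the sign of every term are right. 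A cleaner, coordinate-free alternative worth mentioning is that both identities are equivalent to the homogeneity and translation-invariance of the partition function $\mathcal{Z}(\boldsymbol{x})$: since $U_j = \partial_{x_j}\log\mathcal{Z}$ and $\mathcal{Z}$ is a product of $(3,0)$-differentials (hence translation invariant and homogeneous of a fixed degree), applying the generators of the affine group $z \mapsto z+\epsilon$ and $z \mapsto \lambda z$ to $\log\mathcal{Z}$ immediately yields $\sum_j U_j = 0$ and $\sum_j x_j U_j = \deg$, and I would compute that degree by counting exponents in $\mathcal{Z}$ evaluated along the scaling vector field; this is the conceptually transparent route and I would present the direct computation above as its verification.
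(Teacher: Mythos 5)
Your proposal is correct and follows essentially the same route as the paper: antisymmetric cancellation for $\sum_j U_j$ combined with the summed stationary relations, and for $\sum_j x_j U_j$ the same splitting $x_j=(x_j-\xi_l)+\xi_l$ together with the stationary relation and the symmetrization identities $\sum_{j\neq k}\frac{x_j}{x_j-x_k}=\binom{n}{2}$, $\sum_{l\neq l'}\frac{2\xi_l}{\xi_l-\xi_{l'}}=m(m-1)$, collapsing to $n(n-1)-4nm+4m(m-1)=(n-2m)^2-n-4m$ exactly as in the paper's computation. Your closing remark that both identities express translation invariance and homogeneity of $\log\mathcal{Z}$ is a sound conceptual gloss, and your flag about the paper's inconsistent $m$ versus $n$ upper limits on the $\xi$-sums correctly identifies a typographical slip there rather than a mathematical issue.
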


\begin{proof}[Proof of Theorem (\ref{thm stationary relations imply commutation})]
Please refer to the proof of Theorem (2.8) in \cite{ABKM20}, we attach is here for completeness of the paper.

We first prove that the $U_{j}$ of (\ref{Uj in x xi}) satisfy the null vector equations. Write $U_{ j}=U_j$ throughout. Real-valuedness of $U_j$ follows from $x_k \in \mathbb{R}$ and the $\xi_k$ occurring in complex conjugate pairs. For the translation invariance, note that the pole functions $\xi_{ k}$ clearly satisfy $\xi_{ k}(\boldsymbol{x}+h)=\xi_{ k}(\boldsymbol{x})+h$ for $h \in \mathbb{R}$, since if $R \in \mathcal{CR}_{n.m}(\boldsymbol{x})$ then $R(z-h) \in$ $\mathcal{CR}_{n,m}(\boldsymbol{x}+h)$. Similarly, $r R\left(r^{-1} z\right) \in \mathcal{CR}_{n,m}(r \boldsymbol{x})$ for $r>0$ , hence $\xi_{ k}(r \boldsymbol{x})=r \xi_{ k}(\boldsymbol{x})$. The homogeneity of $U_{ j}$ follows from these properties of the pole function and the definition (\ref{Uj in x xi}).

For the proof that the $U_j$ satisfy the null vector equations it is convenient to introduce

$$
u_j:=U_j-\sum_{k \neq j} \frac{2}{x_j-x_k} .
$$

All we need to check is that $u_j$ 's satisfy the algebraic equations

$$
\frac{1}{2} u_j^2+2 \sum_{k \neq j} \frac{u_j-u_k}{x_j-x_k}=0, \quad j=1, \ldots, n
$$

Using the stationary relation (\ref{Stationary relations}), we have

$$
\begin{aligned}
\frac{1}{8}\left(\frac{1}{2} u_j^2\right. & \left.+2 \sum_{k \neq j} \frac{u_j-u_k}{x_j-x_k}\right)=\left(\sum_{l=1}^n \frac{1}{\xi_l-x_j}\right)^2+\sum_{k \neq j} \sum_{l=1}^n \frac{1}{\left(\xi_l-x_j\right)\left(\xi_l-x_k\right)} \\
& =\sum_{l=1}^n \frac{1}{\xi_l-x_j} \sum_{k=1}^{2 n} \frac{1}{\xi_l-x_k}+2 \sum_{k<l} \frac{1}{\left(\xi_k-x_j\right)\left(\xi_l-x_j\right)} \\
& =\sum_{l=1}^n \frac{1}{\xi_l-x_j} \sum_{k \neq l} \frac{2}{\xi_l-\xi_k}+2 \sum_{k<l} \frac{1}{\left(\xi_k-x_j\right)\left(\xi_l-x_j\right)}=0
\end{aligned}
$$

Here we used the identity

$$
\sum_{k<l} \frac{1}{\left(\xi_k-x_j\right)\left(\xi_l-x_j\right)}=\sum_{k<l}-\frac{1}{\left(\xi_k-x_j\right)\left(\xi_k-\xi_l\right)}+\frac{1}{\left(\xi_l-x_j\right)\left(\xi_k-\xi_l\right)}
$$

\end{proof}

\begin{proof}[Proof of Theorem (\ref{ward identities kappa 0})]
    For convenience we drop the dependence on $\alpha$ throughout. The first conformal Ward identity holds by symmetry:

$$
\sum_{j=1}^{ n} U_j=\sum_{j=1}^{ n} \sum_{k \neq j} \frac{2}{x_j-x_k}+\sum_{j=1}^{ n} \sum_{k=1}^m \frac{4}{\xi_k-x_j}=\sum_{k=1}^m \sum_{l \neq k} \frac{8}{\xi_k-\xi_l}=0 .
$$

To find a sufficient and necessary condition for the second identity $\sum_{j=1}^{n } x_j U_j=(n-2m)^2-n-4m$, we compute

$$
\sum_{j=1}^{ n} \sum_{k \neq j} \frac{x_j}{x_j-x_k}=\frac{n(n-1)}{2}, \quad \sum_{j=1}^{ n} \sum_{k=1}^m \frac{x_j}{\xi_k-x_j}=-mn+\sum_{k=1}^, \sum_{l \neq k} \frac{2 \xi_k}{\xi_k-\xi_l}=-mn+m(m-1)
$$

+
Thus we find $\sum_{j=1}^{ n} x_j U_j= n^2- n-4mn+4 m^2-4m= (n-2m)^2-n-4m$, as claimed. 
\end{proof} 
\subsection{Real rational function with prescribed critical points}
\indent

In this section, we analyze the properties of rational functions $R(z)\in \mathcal{R}_{m,n}(\boldsymbol{z})$.

\begin{proof}[Proof of Theorem (\ref{stationary relations rational functions})]
  Please refer to Lemma 4.1 in \cite{ABKM20}. we summarize the proof here for the completeness of the paper.

 Fix $n \geq 1$ and let $\boldsymbol{x}=\left\{x_1, \ldots, x_{n}\right\}$ be real and distinct. The following are true:
\begin{itemize}
    \item[(i)] For $0 \leq m \leq n$ consider the meromorphic function

\begin{equation} \label{prod f}
f(z)=\frac{\prod_{j=1}^{ n}\left(z-x_j\right)}{\prod_{k=1}^d\left(z-\xi_k\right)^2}
\end{equation}

If $\xi_1, \ldots, \xi_m \in \mathbb{C}$ are all distinct then $f(z)$ has a partial fraction expansion

\begin{equation}
\label{meromorphic f}
f(z)=p(z)+\sum_{k=1}^d \frac{A_k}{\left(z-\xi_k\right)^2}+\sum_{k=1}^d \frac{B_k}{z-\xi_k}
\end{equation}

where $p(z)$ is a polynomial of degree $\max \{2 n-2 m, 0\}$, and the constants $A_k$ and $B_k$ are

\begin{equation}
A_k=\frac{\prod_{j=1}^{n}\left(\xi_k-x_j\right)}{\prod_{l \neq k}\left(\xi_k-\xi_l\right)^2}, \quad B_k=\left(\sum_{j=1}^{ n} \frac{1}{\xi_k-x_j}-\sum_{l \neq k} \frac{2}{\xi_k-\xi_l}\right) A_k, \quad k=1, \ldots, d
\end{equation}
\item[(ii)] If $\xi_1, \ldots, \xi_m \in \mathbb{C}$ are all distinct then the meromorphic function (\ref{meromorphic f}) has a rational primitive iff all $B_k=0$.
\item[(iii)] Up to a real multiplicative constant, the derivative of any $R \in \mathcal{CR}_{n,m}(\boldsymbol{x})$ factorizes as

\begin{equation}\label{R' form}
R^{\prime}(z)=\frac{\prod_{j=1}^{n}\left(z-x_j\right)}{\prod_{k=1}^{m}\left(z-\xi_k\right)^2}  
\end{equation}

where $\left\{\xi_1, \ldots, \xi_{m}\right\}$ are the poles of $R$, considered as a multi-set in the case of multiplicities.

\end{itemize}

In part (i) it is clear that (\ref{meromorphic f}) follows from (\ref{prod f}) and each pole being of order at most two, the latter of which follows from the assumption that the $\zeta_k$ are distinct. The formula for $A_k$ follows from (\ref{prod f}) and (\ref{meromorphic f}) by

$$
A_k=\lim _{z \rightarrow \zeta_k} f(z)\left(z-\zeta_k\right)^2=\frac{\prod_{j=1}^{2 n}\left(\zeta_k-x_j\right)}{\prod_{l \neq k}\left(\zeta_k-\zeta_l\right)^2}
$$

The formula for $B_k$ follows from

$$
B_k=\lim _{z \rightarrow \zeta_k}\left(f(z)\left(z-\zeta_k\right)^2\right)^{\prime}=\lim _{z \rightarrow \zeta_k}\left(\sum_{j=1}^{2 n} \frac{1}{z-x_j}-\sum_{l \neq k} \frac{2}{z-\zeta_l}\right) f(z)\left(z-\zeta_k\right)^2
$$

Part (ii) is obvious. 

In part (iii) the factorization of $R^{\prime}$ into the form (\ref{R' form}) follows from a combination of $$R^{\prime}=\frac{\left(P^{\prime} Q-Q P^{\prime}\right)}{Q^2}$$ the critical points of $R$ being precisely the points in $\boldsymbol{x}$, and the finite poles of $R$ being the zeros of $Q$. The real-valuedness of the multiplicative constant follows
from $P$ and $Q$ having real coefficients.

\end{proof}
\subsection{Field integral of motions and real locus as flow lines}
\indent

In the chordal case, similarly, we can construct a field integral of motions for the multiple chordal Loewner flows.

\begin{lemma} Let $z_1,z_2,\ldots,z_n$ be distinct points on the unit circle, for each $z \in \overline{\mathbb{H}}$

\begin{equation}
    N_t(z)=(g_t(z)-u)^{2m-n-2}g'_{t}(z)\frac{\prod_{k=1}^{n}(g_t(z)-z_k(t))}{\prod_{j=1}^{m}(g_t(z)-\xi_j(t))^2}
\end{equation}

is an integral of motion on the interval $[0,\tau_t \wedge \tau)$ for the multiple chordal Loewner flow with parametrization $\nu_j(t)=1$, $\nu_k(t)=0, k\neq j$, i.e. $j$-th curve is growing.
\end{lemma}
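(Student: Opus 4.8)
The plan is to verify the integral-of-motion property by a direct logarithmic differentiation of $N_t(z)$ in time, showing that every contribution cancels so that $\partial_t \log N_t(z) = 0$ on $[0,\tau_z \wedge \tau)$. Writing $w = g_t(z)$ and abbreviating the growth point of the active curve by $z_j = z_j(t)$, I would first record the elementary evolution equations obeyed by every ingredient under the single-curve flow $\partial_t g_t(z) = 2/(g_t(z)-z_j)$. Differentiating in $z$ gives $\partial_t \log g_t'(z) = -2/(w-z_j)^2$; the passive growth points, the screening charges, and the marked point are all transported by the same flow, so $\dot z_k = 2/(z_k - z_j)$ for $k \neq j$, $\dot\xi_l = 2/(\xi_l - z_j)$, and $\dot u = 2/(u - z_j)$; and the active driving function is $\dot z_j = \partial_{z_j}\log \Phi$, which for the master function carrying the marked point $u$ reads $\dot z_j = \sum_{k\neq j} 2/(z_j - z_k) - \sum_{l} 4/(z_j - \xi_l) + 2(2m-n-2)/(z_j - u)$.

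With these in hand I would expand $\partial_t \log N_t(z)$ into the four groups arising from the factors $(w-u)^{2m-n-2}$, $g_t'(z)$, $\prod_k (w - z_k)$, and $\prod_l (w - \xi_l)^{-2}$, reducing each difference $\dot w - \dot a$ with the uniform partial-fraction identity $2/(w-z_j) - 2/(a - z_j) = -2(w-a)/\big((w-z_j)(a-z_j)\big)$ for $a \in \{z_k,\xi_l,u\}$. The computation then resolves into four clean cancellations, none of which invokes the stationary relations (consistent with the remark following Theorem~\ref{integral of motion in H}): the $+2/(w-z_j)^2$ from the $k=j$ summand of the $\prod_k(w-z_k)$ group cancels the $-2/(w-z_j)^2$ from $\partial_t \log g_t'(z)$; the $z_k$-terms in $\dot z_j$ cancel the $k\neq j$ summands of that same group; the $\xi_l$-terms in $\dot z_j$ cancel the $\prod_l(w-\xi_l)^{-2}$ group via $1/(z_j-\xi_l)+1/(\xi_l-z_j)=0$; and the $u$-term $2(2m-n-2)/(z_j - u)$ in $\dot z_j$ cancels exactly against the prefactor contribution $-2(2m-n-2)/\big((w-z_j)(u-z_j)\big)$.

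The only genuine subtlety, and the one place an error could enter, is the marked-point bookkeeping: if one used the driving function without the $2(2m-n-2)/(z_j-u)$ term, a residual $-2(2m-n-2)/\big((w-z_j)(u-z_j)\big)$ would survive, so it is essential that $\dot z_j$ be the logarithmic derivative of the full $u$-dependent master function $\Phi$, not of its $u=\infty$ specialization. I would therefore display the driving function explicitly at the outset and dispose of the $g'$, $z_k$, and $\xi_l$ cancellations first, handling the marked-point factor last; setting $u=\infty$ then simultaneously kills the prefactor and its matching driving term, recovering the degenerate form $N_t(z)=g_t'(z)\prod_k(g_t(z)-z_k(t))/\prod_l(g_t(z)-\xi_l(t))^2$. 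The remaining manipulations are routine and the identity $\dot w - \dot u = -2(w-u)/\big((w-z_j)(u-z_j)\big)$ is the single computation worth stating in full, since it is what pairs the prefactor against the driving term.
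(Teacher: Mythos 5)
Your proposal is correct and follows essentially the same route as the paper: a direct logarithmic differentiation of $N_t(z)$ using the transport equations $\dot z_k = 2/(z_k-z_j)$, $\dot\xi_l = 2/(\xi_l-z_j)$, $\dot u = 2/(u-z_j)$, $\partial_t \log g_t'(z) = -2/(g_t(z)-z_j)^2$, together with the full $u$-dependent driving function (the paper's $\tfrac{4m-2n-4}{z_j-u}$ term, which equals your $\tfrac{2(2m-n-2)}{z_j-u}$), followed by term-by-term cancellation via the same partial-fraction identity, with no appeal to the stationary relations. Your emphasis on the marked-point bookkeeping is well placed, since the paper's first displayed system omits the $u$-term (and has a sign slip in $\dot z_j$) that its subsequent log-derivative computation correctly restores.
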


\begin{proof}
By the Loewner equation, the following identities hold:
\begin{equation}
\left\{
\begin{aligned}
 &\frac{dz_j(t)}{dt} =\sum_{k\neq j}\frac{2}{z_{k}(t)-z_j(t)}- \sum_{l}\frac{4}{z_j(t)-\xi_{l}(t)} \\
 &\frac{dz_{k}(t)}{dt}=\frac{2}{z_k(t)-z_{j}(t)},  k \neq j \\
  &  \frac{d\xi_{l}(t)}{dt}= \frac{2}{\xi_{l}(t)-z_j(t)} \\
  &  \frac{d u(t)}{dt}= \frac{2}{u(t)-z_j(t)} \\
   &  \frac{dg_t(z)}{dt} =  \frac{2}{g_t(z)-z_j(t)} \\       
\end{aligned}
\right.
\end{equation}

 \begin{equation}\label{derivatives of terms}
\left\{
\begin{aligned}
& \frac{d\log g_{t}'(z)}{dt}=- \frac{2}{(z_j(t)-g_t)^2} \\
& \frac{d\log(z_k(t) - g_t(z))}{dt}=\frac{1}{z_k-g_t}\left(\frac{2}{z_k(t)-z_{j}(t)}-\frac{2}{g_t(z)-z_j(t)}\right)=-\frac{2}{(z_k(t)-z_j(t))(g_t(z)-z_j(t))},   k\neq j \\
&\frac{d\log(\xi_l(t) - g_t(z))}{dt}=
\frac{1}{\xi_l-g_t}\left(\frac{2}{\xi_{l}(t)-z_j(t)}- \frac{2}{g_t(z)-z_j(t)}\right)
= -\frac{2}{(\xi_l(t)-z_j(t))(g_t(z)-z_j(t))}   \\
& \frac{d\log(u - g_t(z))}{dt}=
\frac{1}{u-g_t}\left(\frac{2}{u-z_j(t)}- \frac{2}{g_t(z)-z_j(t)}\right)
= -\frac{2}{(u-z_j(t))(g_t(z)-z_j(t))}\\
&\frac{d\log(z_j(t)-g_t(z))}{dt}=\frac{1}{z_j(t)-g_t}\left(\sum_{k\neq j}\frac{2}{z_j(t)-z_k(t)}-\sum_{l}\frac{4}{z_j(t)-\xi_{l}(t)}+\frac{4m-2n-4}{z_j-u} -\frac{2}{g_t-z_j(t)}\right)
\end{aligned}
\right.
 \end{equation} 

\begin{equation} \label{Njt}
\log N_{t}(z)=(2m-n-2)\log(g_t(z)-u)+\log(g'_t(z))+ \sum_{k=1}^{n}\log(g_t(z)-z_k(t))-2\sum_{k=1}^{n}\log(g_t(z)-\xi_j(t))    
\end{equation}
By substituting equations (\ref{derivatives of terms}) into equation (\ref{Njt}).
\begin{equation} \label{dNj t}
\frac{d\log N_t(z)}{dt} =
 \underbrace{\begin{aligned}
 &-(2m-n-2)\frac{2}{(u-z_j(t))(g_t(z)-z_j(t))}-\frac{2}{(z_j(t)-g_t)^2}\\&
 -\sum_{k=1}^{n}\frac{2}{(z_k(t)-z_j(t))(g_t(z)-z_j(t))} +2\sum_{l=1}^{m}\frac{2}{(\xi_l(t)-z_j(t))(g_t(z)-z_j(t))} \\
\end{aligned}}_{N^{j}_{t}(z)}
\end{equation}

We denote the sum on the right hand side of (\ref{dNj t}) by $N^{j}_{t}(z)$.
By direct computations we obtain that all terms canceled out,
$$\frac{ d\log N_t(z)}{dt} = N_{t}^{j}(t) =0$$

\end{proof}

\begin{proof}[Proof of Theorem (\ref{integral of motion in H})]

Note that for $\nu(t)$ parametrization
$$
\partial_t g_t(z)=\sum_{j=1}^n \nu_j(t)\frac{2}{g_t(z)-z_j(t)}, \quad g_0(z)=z,
$$

  \begin{equation}
\left\{
\begin{aligned}
& \frac{d\log g_{t}'(z)}{dt}=-\sum_j \frac{2}{(z_j(t)-g_t)^2} \\
&\frac{d\log(\xi_l(t) - g_t(z))}{dt}=\sum_{j=1}^{n}
\frac{\nu_j(t)}{\xi_l-g_t}\left(\frac{2}{\xi_{l}(t)-z_j(t)}- \frac{2}{g_t(z)-z_j(t)}\right)
= -\sum_{j=1}^{n}\frac{2}{(\xi_l(t)-z_j(t))(g_t(z)-z_j(t))}   \\
& \frac{d\log(u - g_t(z))}{dt}=\sum_{j=1}^{n}
\frac{\nu_j(t)}{u-g_t}\left(\frac{2}{u-z_j(t)}- \frac{2}{g_t(z)-z_j(t)}\right)
= -\sum_{j=1}^{n}\frac{2\nu_j(t)}{(u-z_j(t))(g_t(z)-z_j(t))}\\
&\frac{d\log(z_j(t)-g_t(z))}{dt}=\frac{\nu_j(t)
}{z_j(t)-g_t}\left(\sum_{k\neq j}\frac{2}{z_j(t)-z_k(t)}-\sum_{l}\frac{4}{z_j(t)-\xi_{l}(t)}+\frac{4m-2n-4}{z_j-u} -\frac{2}{g_t-z_j(t)}\right)\\
&-\sum_{k\neq j}\nu_{k}(t)\frac{2}{(z_k(t)-z_j(t))(g_t(z)-z_j(t))}
\end{aligned}
\right.
 \end{equation} 
 By plugging in these identities, we obtain that 

$$\frac{d \log N_t(z)}{dt}= \sum_{j=1}^{n}\nu_j(t) N^{j}_t(z)=0$$

\end{proof}

\subsection{Classical limit of martingale observables* } \label{Multiple Chordal Martingale Observable}
\indent

In this section, we discuss how the field integral of motion is heuristically derived as the classical limit of martingale observable constructed in conformal field theory.

Based on the SLE-CFT correspondence, the multiple chordal SLE($\kappa$) system can be coupled to a conformal field theory. We will construct this conformal field theory using vertex operators, based on \cite{KM13,KM21}.

\begin{defn}[$n$-leg operator with screening charges]
    Consider the following charge distribution on the Riemann sphere.

$$
\boldsymbol{\beta}=2b\delta_{\infty}
$$

$$\boldsymbol{\tau_1}=\sum_{j=1}^{n} a \delta_{z_j}-\sum_{k=1}^m 2 a \delta_{\xi_k}-(n-2m)a \delta_{\infty}$$

$$\boldsymbol{\tau_2}=-\sigma \delta_{\infty}+\sigma \delta_z $$

where the parameter $\sigma= \frac{1}{a}$.

The $n$-leg operator with screening charges $\boldsymbol{\xi}$ and background charge $\boldsymbol{\beta}$ is given by the OPE exponential: 

\begin{equation}
\mathcal{O}_{\boldsymbol{\beta}}[\boldsymbol{\tau_1}]=\frac{C_{(b)}[\boldsymbol{\tau_1}+\boldsymbol{\beta}]}{C_{(b)}[\boldsymbol{\beta}]} \mathrm{e}^{\odot i \Phi[\boldsymbol{\tau_1}]}
\end{equation}

\end{defn}

\begin{defn}[Screening fields]

For each link pattern $\alpha$, we can choose closed contours $\mathcal{C}_1, \ldots, \mathcal{C}_n$ along which we may integrate the $\boldsymbol{\xi}$ variables to screen the vertex fields.
Let $\mathcal{S}$ be the screening operator, we define the screening operation as 
$$
\mathcal{S}\mathcal{O}_{\boldsymbol{\beta}}[\boldsymbol{\tau_1}]=\oint_{\mathcal{C}_1} \ldots \oint_{\mathcal{C}_n} \mathcal{O}_{\boldsymbol{\beta}}[\boldsymbol{\tau_1}]
$$

Meanwhile, we integrate the correlation function $\mathbf{E}\mathcal{O}_{\boldsymbol{\beta}}[\boldsymbol{\tau_1}]=\Phi_\kappa(\boldsymbol{z}, \boldsymbol{\xi})$ , the conformal dimension is 1 at the $\boldsymbol{\xi}$ points, i.e. since $\lambda_b(-2 a)=1$. This leads to the partition function for the corresponding multiple chordal SLE($\kappa$) system:
$$
\mathcal{Z}_\kappa(\boldsymbol{z}):=\mathbf{E}\mathcal{S}\mathcal{O}_{\boldsymbol{\beta}}[\boldsymbol{\tau_1}]=\oint_{\mathcal{C}_1} \ldots \oint_{\mathcal{C}_n} \Phi_\kappa(\boldsymbol{z}, \boldsymbol{\xi}) d \xi_n \ldots d \xi_1 .
$$

\end{defn}

\begin{thm}[Martingale observable]
 For any tensor product $X$ of fields in the OPE family $\mathcal{F}_{\boldsymbol{\beta}}$ of $\Phi_{\boldsymbol{\beta}}$,
\begin{equation}
M_t(X)=\frac{\mathbf{E}\mathcal{S} \mathcal{O}_{\boldsymbol{\beta}}[\boldsymbol{\tau_1}] X}{\mathbf{E}\mathcal{S} \mathcal{O}_{\boldsymbol{\beta}}[\boldsymbol{\tau_1}]} \| g_t^{-1}
\end{equation}
is a local martingale, where $g_t(z)$ is the Loewner map for multiple chordal SLE($\kappa$) system associated to $\mathcal{Z}_\kappa(\boldsymbol{z})=\mathbf{E}\mathcal{S} \mathcal{O}_{\boldsymbol{\beta}}[\boldsymbol{\tau_1}]$

\end{thm}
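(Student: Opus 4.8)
The plan is to apply It\^o's formula to $M_t(X)$ and to show that its drift vanishes identically; since $M_t(X)$ is then a continuous semimartingale with no bounded-variation part, it is a local martingale. I work in the $\mathbb{H}$-uniformization in which the $j$-th curve is grown with driving function satisfying $dW_t=\sqrt{\kappa}\,dB_t+b_j\,dt$, where $b_j=\kappa\,\partial_{x_j}\log\mathcal{Z}_\kappa(\boldsymbol{z})$ as in Remark~\ref{drift term pre schwarz form}, while the remaining marked data (the other growth points $z_k$, the screening points $\xi_l$, and the insertion points of the fields in $X$) are transported by the Loewner flow $d\zeta_t=\tfrac{2}{g_t(\zeta)-W_t}\,dt$. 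The ratio $\widehat{\mathbf{E}}[X]=\mathbf{E}[\mathcal{S}\mathcal{O}_{\boldsymbol{\beta}}[\boldsymbol{\tau_1}]\,X]/\mathbf{E}[\mathcal{S}\mathcal{O}_{\boldsymbol{\beta}}[\boldsymbol{\tau_1}]]$ is a tensor (a differential of the appropriate weights) in the insertion points, and $M_t(X)$ is its transport by $g_t^{-1}$; the symbol $\|\,g_t^{-1}$ records that each field is carried along with its conformal weight.

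First I would expand the time-derivative of $M_t(X)$ into three contributions: (a) the transport term from the deterministic flow of the marked points together with the conformal factors $(g_t^{-1})'$, which by the conformal Ward identity equals the insertion of $\oint v_t(z)\,T(z)\,dz$ paired with the Loewner vector field $v_t(z)=\tfrac{2}{z-W_t}$; (b) the first-order drift $b_j\,\partial_{W}$ coming from the drift of $W_t$; and (c) the It\^o second-order term $\tfrac{\kappa}{2}\,\partial_{W}^2$ arising from the quadratic variation of $\sqrt{\kappa}\,dB_t$. This is the correlation-function analogue of the computation behind Theorem~\ref{random coordinate change}, now performed on $\mathbf{E}[\mathcal{S}\mathcal{O}_{\boldsymbol{\beta}}[\boldsymbol{\tau_1}]\,X]$ rather than on the driving function alone.

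Next I would invoke the degenerate-field structure of the $n$-leg operator. Each growth-point insertion carries charge $a$, so it is a level-two Virasoro-degenerate field, and the associated null-vector relation expresses the action of $L_{-2}$ on that insertion in terms of $L_{-1}^2=\partial_{W}^2$. With the charge $a$ tuned to the SLE value, the coefficient produced by pairing this relation with $v_t$ matches $\tfrac{\kappa}{2}$, so that contributions (a) and (c) combine into precisely the BPZ/null-vector operator. Since $\mathcal{Z}_\kappa=\mathbf{E}[\mathcal{S}\mathcal{O}_{\boldsymbol{\beta}}[\boldsymbol{\tau_1}]]$ satisfies the null vector equations by construction (the very equations appearing in Theorem~\ref{commutation for kappa=0}), acting with this operator reproduces exactly the first-order drift (b) with opposite sign. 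After dividing by $\mathcal{Z}_\kappa$, the drift of the ratio $M_t(X)$ cancels termwise.

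The step I expect to be the main obstacle is justifying that the screening operation $\mathcal{S}=\oint_{\mathcal{C}_1}\cdots\oint_{\mathcal{C}_m}$ commutes with the BPZ operator. Because each screening charge carries charge $-2a$ with conformal dimension $\lambda_b(-2a)=1$, the commutator of the level-two operator with a screening insertion is a total derivative $\partial_{\xi_l}(\cdots)$ in the corresponding screening variable; the vanishing of the resulting boundary terms is exactly what forces the contours $\mathcal{C}_l$ to be closed (Pochhammer) contours. I would therefore differentiate under the integral sign, identify each commutator term as an exact form in $\xi_l$, and conclude that it integrates to zero around the closed $\mathcal{C}_l$. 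Granting this, the screened partition function inherits the null vector equation, the cancellation of the previous paragraph holds verbatim, and $M_t(X)$ is a local martingale.
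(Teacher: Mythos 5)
Your sketch is essentially correct, but you should know that the paper itself contains no proof of this theorem: Section~\ref{Multiple Chordal Martingale Observable} is one of the starred, heuristic sections, and the statement is asserted as an instance of the SLE--CFT correspondence, with the burden carried by the vertex-field machinery of \cite{KM13, KM21}. What you have written is the standard argument that those references (and, for the screened case, \cite{Dub06} and \cite{FLPW24}) supply: It\^o expansion of the ratio with drift $b_j=\kappa\,\partial_{x_j}\log\mathcal{Z}_\kappa$, identification of the transport term with a stress-tensor insertion via the Ward identities, the level-two degeneracy of the one-leg operator converting the It\^o term $\frac{\kappa}{2}\partial_W^2$ plus the Ward term into the BPZ operator, and cancellation against the drift because $\mathcal{Z}_\kappa$ satisfies the null-vector PDE. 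So your route coincides with the intended (but unwritten) one, and it correctly isolates the genuinely nontrivial step, namely that screening commutes with the BPZ operator up to exact forms in the $\xi_l$ that vanish over closed Pochhammer contours.

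Two points deserve repair. First, the equations you invoke for $\mathcal{Z}_\kappa$ are not those of Theorem~\ref{commutation for kappa=0}: that theorem gives the $\kappa=0$ \emph{algebraic} null-vector relations for the drifts $U_j$, which arise only in the classical limit, whereas for $\kappa>0$ you need the second-order BPZ partial differential equations with $\kappa$-dependent coefficients. Moreover, your middle paragraph takes ``$\mathcal{Z}_\kappa$ satisfies the null vector equations by construction'' as given, while your final paragraph is precisely the proof of that claim; the logical order should be reversed, establishing the screened BPZ equation first and only then running the drift cancellation. Second, in the screening step the phrase ``total derivative integrates to zero around the closed contour'' needs care: the integrand $\Phi_\kappa$ is multivalued for generic $\kappa$, and the exact forms $\partial_{\xi_l}(\cdots)$ produced by the commutator carry the same monodromy, so the vanishing holds because the Pochhammer cycle is closed on the covering surface where the integrand is single-valued --- this is exactly why Pochhammer (rather than simple) contours appear, and stating it this way closes the gap you flagged.
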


\begin{remark}
The Martingale observable theorem can be generalized to the linear combination of screening fields $\mathcal{S}_{\alpha}\mathcal{O}_{\boldsymbol{\beta}}[\boldsymbol{\tau}_1]$ with respect to different integral contours, where
 $Y= \sum_{\alpha} \sigma_{\alpha}\mathcal{S}_{\alpha} \mathcal{O}_{\boldsymbol{\beta}}$, where $\sigma_{\alpha} \in \mathbb{R}$.

\end{remark}

\begin{cor}
Let the divisor $ \boldsymbol{\tau_2}=-\frac{\sigma}{2} \delta_{0}-\frac{\sigma}{2}\ \delta_{\infty}+\sigma \delta_z$ where the parameter $\sigma= \frac{1}{a}$, and insert $ X = \mathcal{O}_{\boldsymbol{\beta}}[\boldsymbol{\tau_2}]$.

\begin{equation}
M_{t,\kappa}(z)=\frac{\mathbf{E}\mathcal{S} \mathcal{O}_{\boldsymbol{\beta}}[\boldsymbol{\tau_1}]\mathcal{O}_{\boldsymbol{\beta}}[\boldsymbol{\tau_2}]}
{\mathbf{E}\mathcal{S} \mathcal{O}_{\boldsymbol{\beta}}[\boldsymbol{\tau_1}]} \| g_t^{-1}
\end{equation}
is local martingale where $g_t(z)$ is the Loewner map for multiple chordal SLE($\kappa$) system associated to $\mathcal{Z}_\kappa(\boldsymbol{z})=\mathbf{E}\mathcal{S} \mathcal{O}_{\boldsymbol{\beta}}[\boldsymbol{\tau_1}]$
.
\end{cor}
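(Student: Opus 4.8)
The plan is to obtain the statement as an immediate specialization of the Martingale observable theorem, so the only genuine content is to verify that the inserted field $\mathcal{O}_{\boldsymbol{\beta}}[\boldsymbol{\tau_2}]$ is a legitimate member of the OPE family $\mathcal{F}_{\boldsymbol{\beta}}$ to which that theorem applies. First I would recall that the theorem guarantees that
$$
M_t(X)=\frac{\mathbf{E}\mathcal{S}\mathcal{O}_{\boldsymbol{\beta}}[\boldsymbol{\tau_1}]X}{\mathbf{E}\mathcal{S}\mathcal{O}_{\boldsymbol{\beta}}[\boldsymbol{\tau_1}]}\,\|\,g_t^{-1}
$$
is a local martingale for every tensor product $X$ of fields in $\mathcal{F}_{\boldsymbol{\beta}}$, where $g_t$ is the Loewner map driven by the partition function $\mathcal{Z}_\kappa(\boldsymbol{z})=\mathbf{E}\mathcal{S}\mathcal{O}_{\boldsymbol{\beta}}[\boldsymbol{\tau_1}]$. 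Thus it suffices to exhibit $\mathcal{O}_{\boldsymbol{\beta}}[\boldsymbol{\tau_2}]$ as such an $X$.

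The central step is to check that $\boldsymbol{\tau_2}=-\frac{\sigma}{2}\delta_{0}-\frac{\sigma}{2}\delta_{\infty}+\sigma\delta_z$ is a neutral divisor: summing the coefficients gives $-\frac{\sigma}{2}-\frac{\sigma}{2}+\sigma=0$. Charge neutrality is precisely the condition under which the OPE exponential $\mathcal{O}_{\boldsymbol{\beta}}[\boldsymbol{\tau_2}]$ defines a single-valued field in the OPE family $\mathcal{F}_{\boldsymbol{\beta}}$ of $\Phi_{\boldsymbol{\beta}}$. Here the neutralizing charge $-\sigma$, which sat entirely at $\infty$ in the original $\boldsymbol{\tau_2}=-\sigma\delta_\infty+\sigma\delta_z$, has been split symmetrically between the two marked points $0$ and $\infty$; this redistribution alters the conformal weights at those points (they become $\lambda_b(\pm\tfrac{\sigma}{2})$, against $\lambda_b(\sigma)$ at $z$) but does not affect membership in the family, since the total charge is unchanged. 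I would record these weights to confirm covariance and to verify that the correlator $\mathbf{E}\mathcal{S}\mathcal{O}_{\boldsymbol{\beta}}[\boldsymbol{\tau_1}]\mathcal{O}_{\boldsymbol{\beta}}[\boldsymbol{\tau_2}]$ is nonzero, so that the quotient defining $M_{t,\kappa}(z)$ is well posed.

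With this verified I would simply set $X=\mathcal{O}_{\boldsymbol{\beta}}[\boldsymbol{\tau_2}]$ and invoke the theorem; the resulting $M_t(X)$ coincides verbatim with $M_{t,\kappa}(z)$, establishing the local martingale property. The main obstacle I anticipate is not the martingale property itself, which is inherited directly, but the bookkeeping needed to justify admissibility of the split-charge field: one must confirm that the OPE exponential converges and is single-valued after the charge at infinity is redistributed, and that the covariance of the Coulomb-gas correlator at $0,\infty,z$ is the one required by the theorem. Both are controlled by the neutrality computation above together with the covariance properties already established for $\boldsymbol{\tau_1}$, so no new analytic input beyond the Martingale observable theorem is required.
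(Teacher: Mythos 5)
Your proposal is correct and takes essentially the same route as the paper: the paper offers no separate argument for this corollary, treating it precisely as the specialization $X=\mathcal{O}_{\boldsymbol{\beta}}[\boldsymbol{\tau_2}]$ of the preceding martingale observable theorem, which is exactly what you do. Your neutrality check $-\tfrac{\sigma}{2}-\tfrac{\sigma}{2}+\sigma=0$, guaranteeing that the OPE exponential of $\boldsymbol{\tau_2}$ is an admissible member of the OPE family $\mathcal{F}_{\boldsymbol{\beta}}$, is just the bookkeeping the paper leaves implicit.
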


Explicit computation shows that:

$$
\begin{aligned}
    &\mathbf{E}\oint_{\mathcal{C}_1} \ldots \oint_{\mathcal{C}_n} \mathcal{O}_{\boldsymbol{\beta}}[\boldsymbol{\tau_1}]\mathcal{O}_{\boldsymbol{\beta}}[\boldsymbol{\tau_2}]\\
    &=\oint_{\mathcal{C}_1} \ldots \oint_{\mathcal{C}_n} \prod_{1 \leq i<j \leq n}(z_i-z_j)^{a^2} \prod_{1 \leq i<j \leq m}(\xi_i-\xi_j)^{4 a^2} \prod_{i=1}^{n} \prod_{j=1}^m\left(z_i-\xi_j\right)^{-2 a^2} \\
& g^{\prime}(z_j) ^{\lambda_b(a)}g^{\prime}(z) ^{\lambda_b(\sigma)} (z-z_j)^{\sigma a}(z-\xi_k)^{-2\sigma a}
\end{aligned}
$$
$$
\begin{aligned}
    &\mathbf{E}\oint_{\mathcal{C}_1} \ldots \oint_{\mathcal{C}_n} \mathcal{O}_{\boldsymbol{\beta}}[\boldsymbol{\tau_1}] \\
    &=\oint_{\mathcal{C}_1} \ldots \oint_{\mathcal{C}_n} \prod_{1 \leq i<j \leq n}(z_i-z_j)^{a^2} \prod_{1 \leq i<j \leq m}(\xi_i-\xi_j)^{4 a^2} \prod_{i=1}^{n} \prod_{j=1}^m\left(z_i-\xi_j\right)^{-2 a^2} 
g^{\prime}(z_j) ^{\lambda_b(a)}
\end{aligned}
$$

Let the parameter $\sigma= \frac{1}{a}$. Then
as $\kappa \rightarrow 0$, the multiple integrals concentrate on the poles.

\begin{equation}
\begin{aligned}
 &M_{t,0}(z)=\lim_{\kappa \rightarrow 0}\frac{\mathbf{E}\oint_{\mathcal{C}_1} \ldots \oint_{\mathcal{C}_n} \mathcal{O}_{\boldsymbol{\beta}}[\boldsymbol{\tau_1}]\mathcal{O}_{\boldsymbol{\beta}}[\boldsymbol{\tau_2}]}
{\mathbf{E}\oint_{\mathcal{C}_1} \ldots \oint_{\mathcal{C}_n} \mathcal{O}_{\boldsymbol{\beta}}[\boldsymbol{\tau_1}]}\\
&=g_{t}'(z)\frac{\prod_{k=1}^{n}(z-z_k)}{\prod_{j=1}^{m}(z-\xi_j)^2}
\end{aligned}
\end{equation}
which is exactly the integral of motion we use.

$M_{t,\kappa}(z)$ is a $(\lambda_b(\sigma),0)$ differential with respect to $z$, where $\lambda_b(\sigma)= \frac{1}{2a^2}-\frac{b}{a}$.
By taking the limit $\kappa \rightarrow 0$,
$\lim_{\kappa \rightarrow 0}\lambda_b(\sigma)=1$, thus $M_{t,0}(z)$ is a (1,0) differential.

\begin{remark}
   The integral of motion $N_{t}(z)$ can be verified through direct computation.
\end{remark}

\subsection{Enumeration and topological link pattern}
\label{Enumerative geometry of chordal multiple SLE(0)}
\indent

In this section, we present theorems concerning the classification of rational functions \( R(z) \in \mathcal{R}(\boldsymbol{z}) \), which correspond bijectively to the critical points of the master functions of rational Knizhnik–Zamolodchikov (KZ) equations and, equivalently, to the multiple chordal $\mathrm{SLE}(0)$ systems. These results were established in \cite{S02a, SV03}.

\begin{thm}
 For generic $\boldsymbol{z}$ on the real line and $m$ poles $\boldsymbol{\xi}$.
\begin{itemize}

\item \rm(Underscreening) If $n+1-m>m$, then for generic $\boldsymbol{z}$ the real locus $\Gamma(R)$ forms a $(n,m)$-link,  each link pattern can be realized by a unique $R\in \mathcal{CR}_{m,n}(\boldsymbol{z})$  (up a real multiplicative constant).
\item \rm(Threshold) If $n+1-m=m$, there are no such $R \in \mathcal{CR}_{m,n}(\boldsymbol{z})$.
\item \rm(Overscreening) If $0\leq n+1-m<m$, for generic $\boldsymbol{z}$, the real locus $\Gamma(R)$ forms a $(n,m)$-link, each link pattern can be realized by a continuous family of $R\in \mathcal{CR}_{m,n}(\boldsymbol{z})$.
\item \rm(Upperbound) If $m >n$, there are no such $R\in \mathcal{CR}_{m,n}(\boldsymbol{z})$.
\end{itemize}

\end{thm}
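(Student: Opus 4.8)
The plan is to recast the classification as an enumeration of critical points of the master function $\Phi(\boldsymbol z,\boldsymbol\xi)$, to translate this via the Wronskian into a count of polynomial pairs with prescribed Wronskian, and then to import the reality and net/link-pattern combinatorics of \cite{EG02,SV03,S02a}. First I would use the dictionary already furnished by Theorem~\ref{stationary relations rational functions} and its proof: any $R\in\mathcal{CR}_{m,n}(\boldsymbol z)$ is determined, up to an additive and a real multiplicative constant, by
\[
R'(z)=c\,\frac{\prod_{j=1}^{n}(z-z_j)}{\prod_{k=1}^{m}(z-\xi_k)^2},
\]
and such a rational $R$ exists precisely when every residue $B_k$ of $R'$ vanishes, i.e. when $\boldsymbol\xi$ solves the stationary relations, equivalently when $\boldsymbol\xi$ is a critical point of $\Phi(\boldsymbol z,\cdot)$. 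Thus the problem becomes: for generic real $\boldsymbol z$, count the conjugation-symmetric critical points of $\Phi$, grouped by the topology of the real locus $\Gamma(R)$.

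Next I would write $R=P/Q$ with $\deg Q=m$ (roots $=\boldsymbol\xi$) and $\deg P=n-m+1$, forced by the simple poles together with the order $n-2m+1$ at $\infty$; then $W(P,Q)=P'Q-PQ'=c\prod_{j=1}^{n}(z-z_j)$. The map $R$ has degree $d=\max(m,n-m+1)$, and Riemann--Hurwitz gives $2d-2$ critical points: the $n$ prescribed finite ones plus the ramification at $\infty$ dictated by the pole/zero order there. This leading-coefficient bookkeeping already isolates the \textbf{threshold} case: when $m=n-m+1$ the top coefficients of $P'Q$ and $PQ'$ coincide and cancel, so $\deg W(P,Q)\le n-1<n$ and $W$ can never equal a generic $\prod_j(z-z_j)$; hence no such $R$ exists.

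The combinatorial heart of the argument is the \textbf{underscreening} regime $m<n-m+1$, where $Q$ is the smaller factor. Here I would invoke the Eremenko--Gabrielov theorem (the B.--M. Shapiro conjecture for rational functions) together with the Scherbak--Varchenko analysis of the reproduction/Wronski map: for generic real $\boldsymbol z$ all critical points of $\Phi$ are nondegenerate, isolated, and real, and are in bijection with non-crossing $(n,m)$-link patterns, the number of both being the ballot number $\binom{n}{m}-\binom{n}{m-1}$, each link realized by exactly one $R$. In the \textbf{overscreening} regime $n-m+1<m$, the poles lie in the larger factor $Q$; each solution arises by applying the reproduction procedure to a smaller-degree fertile polynomial $P$ (a critical point of the $(n,\,n-m+1)$ problem), whose partner is determined only up to $Q\mapsto Q+sP$. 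Since $\deg P<\deg Q$ this introduces a free parameter $s$ that moves the poles while fixing the critical points, because $W(P,Q+sP)=W(P,Q)$; one thus obtains a continuous family $R_s=P/(Q+sP)$ realizing a single link, the number of families being $\binom{n}{m-1}-\binom{n}{m}>0$, which matches the overscreening inequality. Finally, for $m>n$ the required degree $\deg P=n-m+1\le 0$ leaves no room for an admissible numerator producing a generic real $(n,m)$-link, so $\mathcal{CR}_{m,n}(\boldsymbol z)$ is empty.

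I expect the main obstacle to be the reality-and-exact-count statement in the underscreening case---that every critical point is real and that the critical points biject with link patterns, each realized uniquely. This is precisely the content of the Shapiro conjecture and its net combinatorics, so I would import it from \cite{EG02,SV03} rather than reprove it. The secondary difficulty is verifying, in the overscreening case, that the reproduction parameter $s$ genuinely sweeps out a one-dimensional family inside a single link class and that distinct fertile partners of degree $n-m+1$ yield distinct classes, which is what upgrades the isolated underscreening count into the family count $\binom{n}{m-1}-\binom{n}{m}$.
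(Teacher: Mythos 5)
Your proposal takes essentially the same route as the paper: the paper's entire proof consists of citing the degenerate case of Theorem 2 of \cite{EG02} for the realization of each link pattern and Theorem 1 of \cite{SV03} for the enumeration, which is exactly what you do for the underscreening core, and your added Wronskian bookkeeping (the leading-coefficient cancellation at threshold, the invariance $W(P,Q+sP)=W(P,Q)$ producing the overscreening families) just makes explicit mechanics that the paper leaves inside those references. One small caveat: your upper-bound argument via $\deg P=n-m+1\le 0$ would also exclude $m=n+1$, where $\deg P=0$ (a constant numerator) is in fact admissible and yields the overscreening family $R=c/(Q+s)$ --- but the paper's own statement carries the same boundary inconsistency, since \cite{SV03} rules out critical points only when $n+1-m<0$.
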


\begin{proof}
By the degenerate case of Theorem 2 in \cite{EG02}, each link pattern can be realized by a rational function $\mathcal{CR}_{m,n}(\boldsymbol{z})$. The uniqueness follows from the following theorem (\ref{enumeration critical points}).
\end{proof}

We can state the classification theorems in terms of critical points of rational KZ equations.

\begin{thm}[Theorem 1 in \cite{SV03}]
\label{enumeration critical points}
 Let $n \in \mathbb{Z}_{>0}$, $m \in \mathbb{Z}_{\geq 0}$, for generic $\boldsymbol{z}$,
 \begin{itemize}
\item If $n+1-m>m$, then $\Phi_{m,n}(\boldsymbol{z}, \boldsymbol{\xi})$  has exactly $\#LP(n,m)$ critical points.
\item If $l(n)+1-m=m$,  $\Phi_{m,n}(\boldsymbol{z}, \boldsymbol{\xi})$ does not have critical points.
\item If $0 \leq n+1-m<m$, then for generic $z$,  $\Phi_{m,n}(\boldsymbol{z}, \boldsymbol{\xi})$ has non-isolated critical points. Written in symmetric coordinates $\lambda_1, \ldots, \lambda_k$, the critical set consists of $\#LP(n, n+1-m)$ straight lines in the space $\mathbb{C}_\lambda^m$.
\item If $n+1-m<0$, $\Phi_{m,n}(\boldsymbol{z}, \boldsymbol{\xi})$ does not have critical points.
\end{itemize}
\end{thm}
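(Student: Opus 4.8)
The plan is to recognize $\Phi_{m,n}$ as the Schechtman--Varchenko master function attached to $\mathfrak{sl}_2$ and then transport the enumeration to the representation-theoretic side, where it becomes a weight-multiplicity computation. Concretely, I would assign to each marked point $z_i$ the two-dimensional (spin-$\tfrac12$) representation $V=\mathbb{C}^2$ with highest weight $\omega_1$, and treat the $\xi_a$ as screening variables attached to the simple root $\alpha$. With the standard normalization $(\omega_1,\omega_1)=\tfrac12$, $(\omega_1,\alpha)=1$, $(\alpha,\alpha)=2$, the $\mathfrak{sl}_2$ master function
\[
\prod_{i<j}(z_i-z_j)^{(\omega_1,\omega_1)}\prod_{i,a}(z_i-\xi_a)^{-(\omega_1,\alpha)}\prod_{a<b}(\xi_a-\xi_b)^{(\alpha,\alpha)}
\]
raised to the fourth power reproduces exactly the exponents $2$, $-4$, $8$ appearing in $\Phi_{m,n}$. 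Since raising to a constant power leaves the critical-point equations unchanged, the $\boldsymbol{\xi}$-critical points of $\Phi_{m,n}$ coincide with those of the $\mathfrak{sl}_2$ master function carrying $n$ fundamental weights and $m$ screenings.

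Next I would invoke the critical-point/Bethe-vector correspondence. The $\boldsymbol{\xi}$-critical points, taken modulo the symmetric group $S_m$ permuting the screenings, are identified with Bethe vectors in the weight-$(n-2m)$ subspace of $V^{\otimes n}$; by the completeness theorem for $\mathfrak{sl}_2$ imported from \cite{SV03} (building on \cite{S02a}), these Bethe vectors are, for generic $\boldsymbol{z}$, nondegenerate and form a basis of the space of singular (highest-weight) vectors $\mathrm{Sing}\,(V^{\otimes n})_{n-2m}$. Consequently the number of symmetric critical points equals $\dim \mathrm{Sing}\,(V^{\otimes n})_{n-2m}$, and nondegeneracy yields their isolatedness.

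The count then follows from classical $\mathfrak{sl}_2$ combinatorics. When $n-2m\ge 0$, equivalently $n+1-m>m$, this dimension is the ballot number $\binom{n}{m}-\binom{n}{m-1}$, which is precisely $\#LP(n,m)$; this settles the underscreening case. For the threshold $n+1-m=m$ (so $n=2m-1$ and the target weight $n-2m=-1$ is negative) and for $n+1-m<0$ (so $m>n+1$), the singular subspace is zero and no critical points exist, matching the stated assertions.

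The overscreening range $0\le n+1-m<m$ is where the argument requires the most care, and this is the step I expect to be the main obstacle. Here the target weight is negative, so the naive count vanishes, yet critical points persist as a non-isolated set. The tool is the reproduction (Wronskian/bending) procedure, which relates critical points of the $m$-variable master function to those of the dual $(n+1-m)$-variable one; the gauge freedom in this construction turns each of the $\#LP(n,n+1-m)$ dual isolated critical points into an affine line of critical points in the symmetric coordinates $\lambda_1,\ldots,\lambda_k$, producing the stated union of $\#LP(n,n+1-m)$ lines. The delicate points are showing that for generic $\boldsymbol{z}$ the reproduction map is a bijection onto its image with one-dimensional fibers, and that no spurious additional critical families appear. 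Rather than reprove these, I would rely on the Bethe-completeness and reproduction results of \cite{SV03} as the deep input, supplying only the identification of exponents and the bookkeeping that matches each regime to the corresponding weight space.
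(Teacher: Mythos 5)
The paper does not actually prove this statement: it is imported verbatim as Theorem~1 of \cite{SV03} (even retaining a stray piece of that paper's notation, ``$l(n)$'', in the second bullet), so there is no internal proof to compare against. Your reconstruction is a broadly faithful account of how the cited source proceeds: the exponent identification is correct (the paper's $\Phi_{m,n}$ is the fourth power of the standard $\mathfrak{sl}_2$ master function with $n$ spin-$\tfrac12$ weights and $m$ screenings, and raising to a fixed power does not change the critical equations), the identification of $S_m$-orbits of nondegenerate critical points with Bethe vectors in $\mathrm{Sing}\,(V^{\otimes n})_{n-2m}$ is the right mechanism for the underscreening count $\binom{n}{m}-\binom{n}{m-1}=\#LP(n,m)$, and the overscreening lines do arise from the reproduction procedure applied to the dual function with $n+1-m$ screenings. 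Since you defer the deep inputs (Bethe completeness and reproduction) to \cite{SV03}, your write-up sits at essentially the same level of self-containment as the paper itself.

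One step of your reasoning is internally inconsistent, though. You dispose of the threshold case $n+1-m=m$ and the range $n+1-m<0$ by arguing that the singular weight space vanishes, hence there are no critical points. Your own treatment of overscreening shows this inference is invalid: for $0\le n+1-m<m$ the weight $n-2m$ is also negative and the singular space also vanishes, yet non-isolated critical points exist. The Bethe correspondence only counts nondegenerate (isolated) critical orbits, so vanishing of the weight space by itself proves nothing about nonexistence. The mechanism that actually closes these two cases in \cite{SV03} is the Wronskian degree argument: a critical point produces a two-dimensional space of polynomials $\langle y_1,y_2\rangle$ with $y_1(\xi)=\prod_{a=1}^m(\xi-\xi_a)$ of degree $m$ and Wronskian $W(y_1,y_2)$ equal (up to constant) to $\prod_{i=1}^n(z-z_i)$, of degree $n$. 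When $\deg y_1\neq\deg y_2$ one has $\deg W=\deg y_1+\deg y_2-1$, forcing $\deg y_2=n+1-m\ge 0$, which rules out the case $n+1-m<0$; at threshold one would need $\deg y_1=\deg y_2=m$, whence the leading terms cancel and $\deg W\le 2m-2=n-1<n$, a contradiction. Substituting this argument for your weight-space claim makes your case analysis match the actual proof in \cite{SV03}.
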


\subsection{Examples: underscreening}
\label{underscreening}
\indent

In this section, we present examples of the multiple chordal SLE(0) systems in the $\mathbb{D}$ uniformization.

Consider the holomorphic map $\mu: \mathbb{D} \rightarrow \mathbb{H}$.
$$\mu(z)=i\frac{1-z}{1+z}$$ 
Then up to a real multiplicative constant,   

$$\mu^*(R'(z)dz)=i^{n+1}\frac{\prod_{j=1}^{m}(1+\xi_j)^2}{\prod_{i=1}^n(1+z_i)} (z+1)^{2m-n-2}\frac{\prod_{k=1}^{n}(z-z_k)}{\prod_{j=1}^{m}(z-\xi_j)^2}dz $$

\begin{thm}
Given $R(z) \in \mathcal{CR}_{m,n}(\boldsymbol{z})$ associate to it a vector field $v_R$ on $\widehat{\mathbb{C}}$ defined by
\begin{equation}
 v_R(z)=\frac{1}{R'(z)} 
\end{equation}    
 The flow lines of $\dot{z} =  v_R(z)$ starting at the critical points are the real locus of $R(z)$.
\end{thm}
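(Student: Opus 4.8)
The plan is to use the fact that $v_R = 1/R'$ is the vector field dual to the exact differential $dR = R'(z)\,dz$, so that the flow lines of $\dot z = v_R(z)$ are precisely the horizontal trajectories of $dR$, i.e. the curves along which $\Im R$ stays constant. First I would record, from part (iii) of Theorem (\ref{stationary relations rational functions}), that up to a real multiplicative constant $R'(z) = \prod_{j=1}^n (z-x_j)\big/\prod_{k=1}^m (z-\xi_k)^2$; hence $v_R(z) = \prod_{k=1}^m (z-\xi_k)^2 \big/ \prod_{j=1}^n (z-x_j)$ is a rational vector field on $\widehat{\mathbb{C}}$ with simple poles at the critical points $x_j$ and double zeros at the poles $\xi_k$ of $R$. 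Writing $\widehat{\mathbb{R}} := \mathbb{R}\cup\{\infty\}$, the real locus is $\Gamma(R) = R^{-1}(\widehat{\mathbb{R}}) = \{\,\Im R = 0\,\}\cup\{\text{poles of }R\}$, which is conjugation symmetric and contains $\mathbb{R}$.

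The key computation is that along any integral curve $z(t)$ of $\dot z = v_R(z)$ one has $\frac{d}{dt}R(z(t)) = R'(z(t))\,v_R(z(t)) = 1$, so that $R(z(t)) = R(z(0)) + t$. Thus $\Im R$ is conserved along flow lines and $R$ carries each flow line at unit speed onto a horizontal line in the image plane. For the forward inclusion I use that each $x_j$ is real and that $R$ is conjugation symmetric ($\overline{R(z^{*})} = R(z)$), so the critical value $R(x_j)$ is real; hence the flow line issuing from $x_j$ satisfies $R(z(t)) = R(x_j) + t \in \mathbb{R}$ for all $t$ and therefore lies in $\Gamma(R)$. The local normal form $R(z) - R(x_j) \sim \tfrac12 R''(x_j)(z-x_j)^2$, with $R''(x_j)$ real and nonzero, shows that the four prongs of $\{\Im R = 0\}$ at $x_j$ point in the real and the purely imaginary directions, and that these are exactly the prongs traced by $v_R$; this matches the local real locus and identifies the flow lines from $x_j$ with the four local branches of $\Gamma(R)$ through $x_j$.

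For the reverse inclusion I would show that $\Gamma(R)$ is swept out by these separatrices. Away from the $x_j$ and the poles of $R$, the map $R$ is a local biholomorphism, so $\{\Im R = 0\}$ is a smooth $1$-manifold tangent to $v_R$; it passes through each simple pole $\xi_k$ smoothly and acquires four prongs at each $x_j$, so $\Gamma(R)$ is a finite graph whose edges are integral curves of $v_R$. Tracing a maximal trajectory and using $R(z(t)) = R_0 + t$, a finite-time endpoint can occur only where $|v_R| \to \infty$, i.e. at a critical point $x_j$, whereas an infinite-time endpoint forces $R \to \infty$, i.e. convergence to a pole of $R$. It remains to rule out trajectories both of whose ends are poles and which meet no critical point; this is the main obstacle. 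I would settle it with the branched-cover structure: the Riemann--Hurwitz total ramification $2(n-m+1)-2 = 2n-2m$ is exactly accounted for by the $n$ simple critical points $x_j$ together with the ramification of order $n-2m$ at $\infty$, so $R$ is ramified only over $\widehat{\mathbb{R}}$. Consequently every component of $\widehat{\mathbb{C}}\setminus\Gamma(R)$ is an unramified, hence biholomorphic, cover of one of the two open half-planes, so the complementary faces are disks; this disk structure forces every edge of $\Gamma(R)$ to have a critical point in its closure and excludes free pole-to-pole trajectories. This is precisely the combinatorial input supplied by the enumerative results of \cite{EG02, SV03} recalled in Section (\ref{Enumerative geometry of chordal multiple SLE(0)}). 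Granting it, following the flow from the critical points along each prong recovers all of $\Gamma(R)$, which yields the reverse inclusion and identifies the real locus of $R$ with the flow lines of $v_R$ starting at the critical points.
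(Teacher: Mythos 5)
The paper states this theorem without any proof at all — it is followed only by a remark that it provides a way to plot the real locus — so there is no argument of the paper's to match yours against; the closest antecedents are the integral-of-motion proof of Theorem~\ref{traces as real locus} and the geometric analysis of $\Gamma(R)$ in \cite{ABKM20}, which your proposal essentially reconstructs. Your core mechanism is exactly right: along a trajectory, $\frac{d}{dt}R(z(t)) = R'(z(t))\,v_R(z(t)) = 1$, so $R(z(t)) = R(z(0)) + t$; the critical values $R(x_j)$ are real because the $x_j$ are real and $\overline{R(z^*)} = R(z)$, which gives the forward inclusion, and since $R''(x_j)$ is real and nonzero the four local prongs of $\{\Im R = R(x_j)\}$ do point in the real and imaginary directions as you claim. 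Your degree and ramification counts are also correct: $\deg R = m + (n-2m+1) = n-m+1$, and the $n$ simple real critical points together with the order-$(n-2m)$ ramification at $\infty$ exhaust $2\deg R - 2$, so each component of the complement of $\Gamma(R)$ is indeed an unramified proper cover of a half-plane, hence maps biholomorphically onto it.

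The one step you assert rather than prove is that the disk-face structure ``forces every edge of $\Gamma(R)$ to have a critical point in its closure,'' and your citation for it is misdirected: \cite{EG02, SV03} supply the enumeration of such $R$ (as recalled in Section~\ref{Enumerative geometry of chordal multiple SLE(0)}), not this structural lemma. The step is true and can be closed as follows. Since $\partial F \subset \Gamma(R)$ is a finite graph, it is locally connected, so by Carath\'eodory the conformal bijection $R: F \to \mathbb{H}$ extends continuously to the closure and the boundary circuit of each face traverses $\widehat{\mathbb{C}}\cap(\mathbb{R}\cup\{\infty\})$ exactly once; in particular it passes through the value $\infty$ exactly once. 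An edge both of whose endpoints are poles would place two boundary passages through $\infty$ on either adjacent face, a contradiction. A trajectory looping from a pole back to itself is not excluded by this count alone, but it is excluded by connectivity: all faces being disks forces $\Gamma(R)$ to be connected (a face adjacent to two distinct components of $\Gamma(R)$ could not be simply connected), while such a loop would form a component disjoint from $\mathbb{R}\cup\{\infty\} \subset \Gamma(R)$. Finally, note that to sweep out all four prongs at each $x_j$ you must run the flow both forward and backward from the critical points, since on two prongs $R$ increases away from $R(x_j)$ and on the other two it decreases; with that reading of ``starting at,'' your two inclusions combine to give the theorem, and your proof is correct and, as far as the structural lemma goes, more detailed than anything the paper offers.
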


\begin{remark}
    This lemma provides an elementary way to plot the real locus of $R(z)$, which are the flow lines of $v_R(z)$ starting at critical points of $R(z)$.
\end{remark}

The critical points are red, the poles are yellow and the marked point $u=-1$ is green.

 \begin{figure}[h]
\begin{minipage}[t]{0.43\linewidth}
    \centering
    \includegraphics[width=6cm]{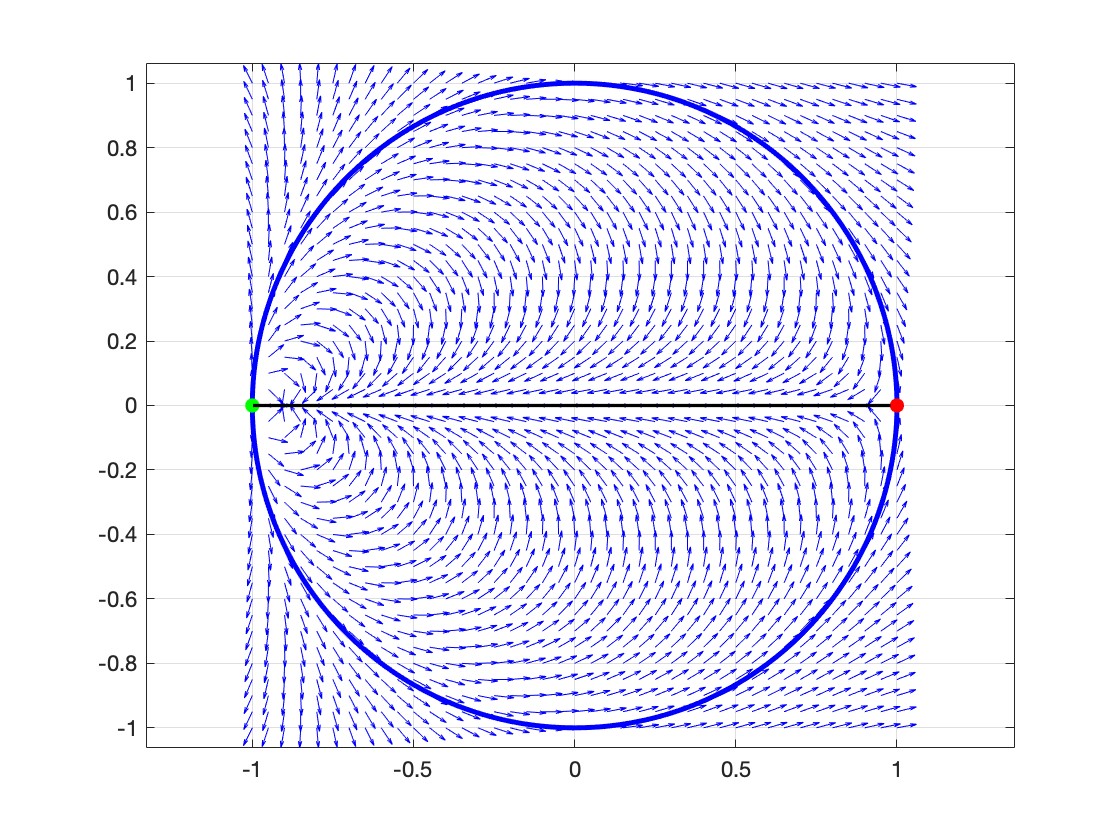}
    \caption{$z_1=1$}
    
\end{minipage}
\begin{minipage}[t]{0.43\linewidth}
    \centering
	\includegraphics[width=6cm]{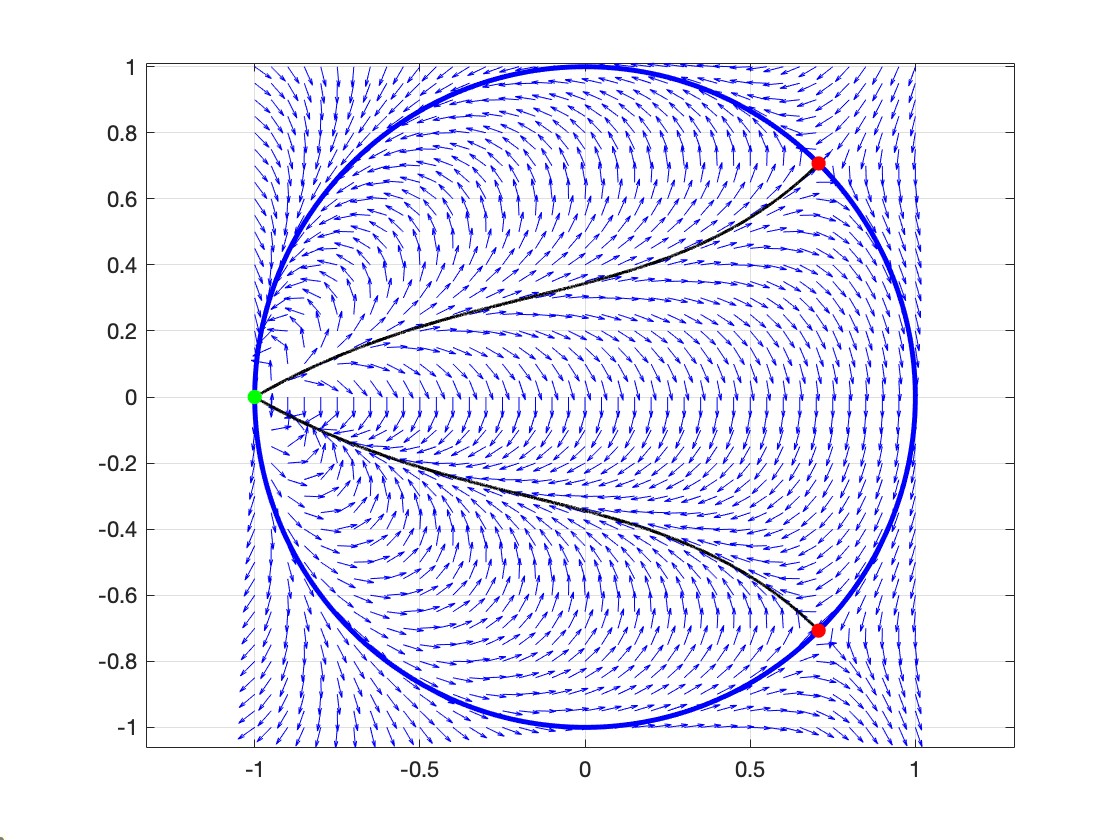}
    \caption{$z_1=e^{i\pi/4},z_2=e^{-i\pi/4},\xi_1=1$}
    
\end{minipage}
\end{figure}
In Figure 4.1, $n=1$,$m=0$,$z_1=1$ the SLE(0) curve connects $z_1$ to $u=-1$.
$$R'(z) = \frac{z-1}{(z+1)^3} $$
thus, as the primitive of $R'(z)$
 $$R(z)=-\frac{z}{(z+1)^2}+c $$
 where $c$ is a real constant.

In Figure 4.2, $n=2$,$m=0$,$z_1=e^{\frac{\pi i}{4}}$,$z_2=e^{\frac{-\pi i}{4}}$, the SLE(0) curves connect $z_1$ to $u=-1$,  $z_2$ to $u=-1$,
$$R'(z) = i\frac{(z-e^{\frac{\pi i}{4}})(z-e^{\frac{-\pi i}{4})})}{(z+1)^4} $$
thus, as the primitive of $R'(z)$
 $$R(z)=\frac{i (-4 + \sqrt{2} + 3 (-2 + \sqrt{2}) z - 6 z^2)}{6 (1 + z)^3}- \frac{i(1-\sqrt{2})}{12}+c $$
 where $c$ is a real constant.
\begin{figure}[h]
\begin{minipage}[t]{0.43\linewidth}
    \centering
    \includegraphics[width=6cm]{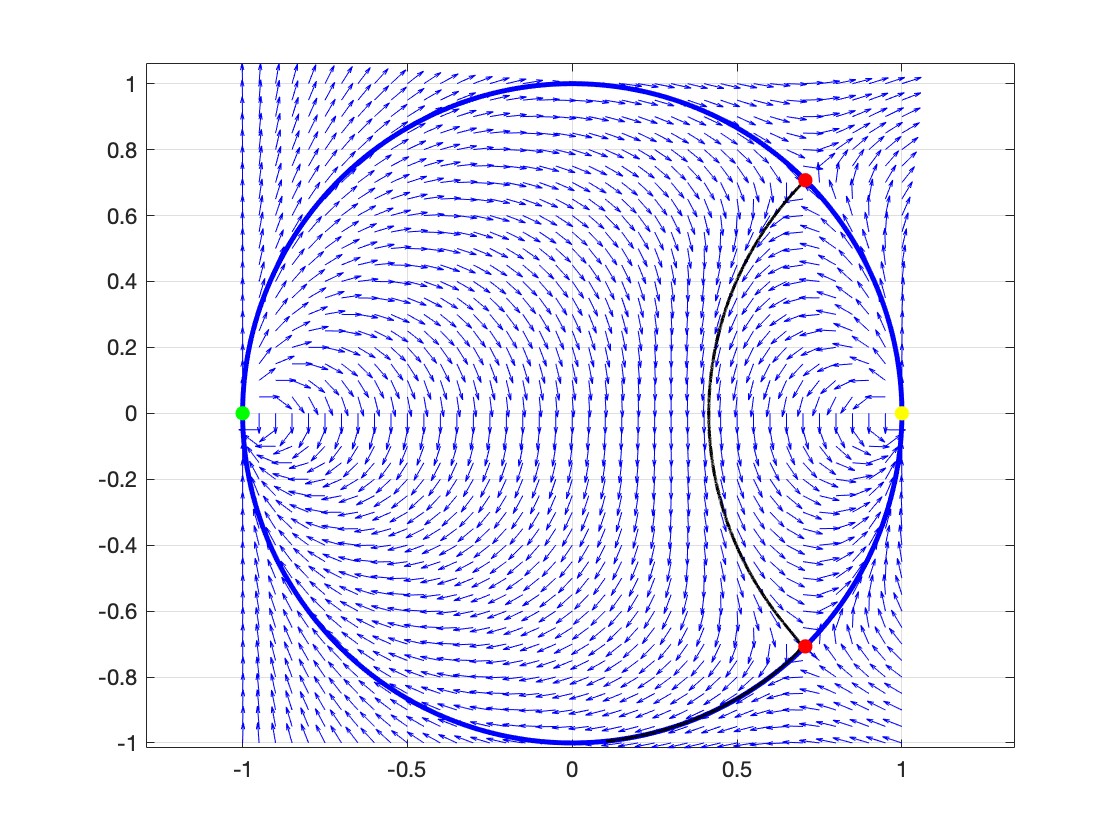}
    \caption{$z_1=e^{i\pi/4},z_2=e^{-i\pi/4},\xi_1=1$}
    
\end{minipage}
\begin{minipage}[t]{0.43\linewidth}
    \centering
	\includegraphics[width=6cm]{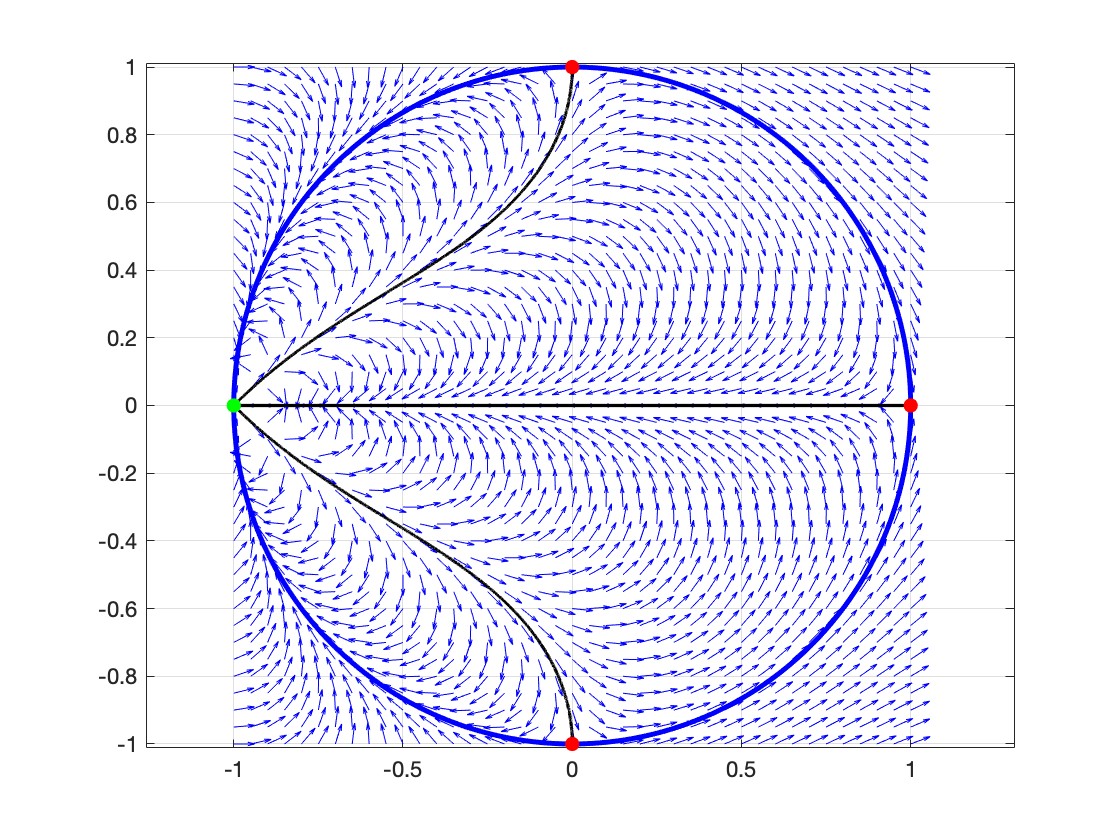}
    \caption{$z_1=i,z_2=1,z_3=-i$}
    
\end{minipage}
\end{figure}

In Figure 4.3, $n=2$,$m=1$,$z_1=e^{\frac{\pi i}{4}}$,$z_2=e^{\frac{-\pi i}{4}}$,$\xi_1=1$ the SLE(0) curve connects $z_1$ to $z_2$.

$$R'(z) = \frac{i (z - e^{\pi i/4}) (z - e^{-\pi i/4})}{(z + 1)^2 (z - 1)^2} $$
thus, as the primitive of $R'(z)$
 $$R(z)= \frac{i(\sqrt{2}-2z)}{2(z^{2}-1)}+ \frac{2+\sqrt{2}i}{4}+c $$
 where $c$ is a real constant.

In Figure 4.4, $n=3$,$m=0$,$z_1=1$,$z_2=i$,$z_3=-i$, the SLE(0) curves connect $z_1$,$z_2$ and $z_3$ to $u=-1$.
$$R'(z) =\frac{(z-i)(z+i)(z-1)}{(z+1)^5} $$
thus, as the primitive of $R'(z)$
 $$R(z)=-\frac{z(1+z+z^2)}{(z+1)^4}+c $$
 where $c$ is a real constant.
\begin{figure}[h]
\begin{minipage}[t]{0.43\linewidth}
    \centering
    \includegraphics[width=6cm]{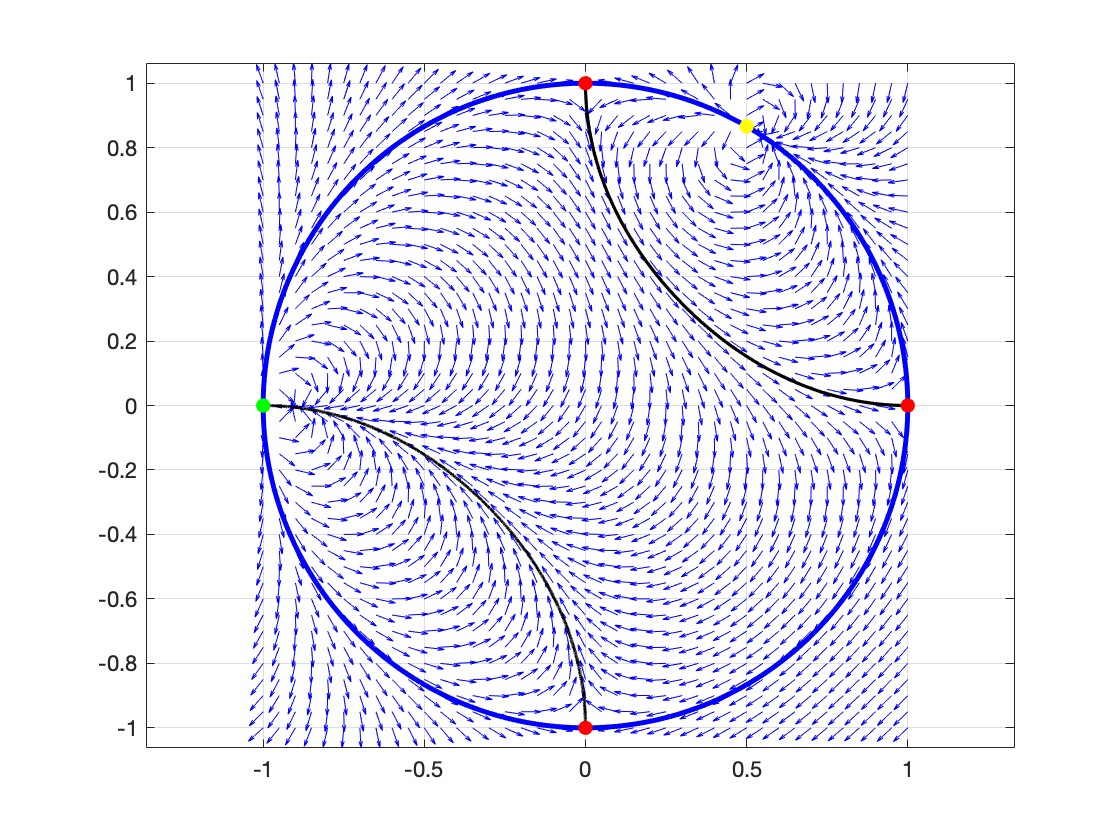}
    \caption{$z_1=i$,$z_2=1$,$z_3=-i$,$\xi_1= e^{\frac{\pi i}{3}}$}
    \label{30}
\end{minipage}
\begin{minipage}[t]{0.43\linewidth}
    \centering
	\includegraphics[width=6cm]{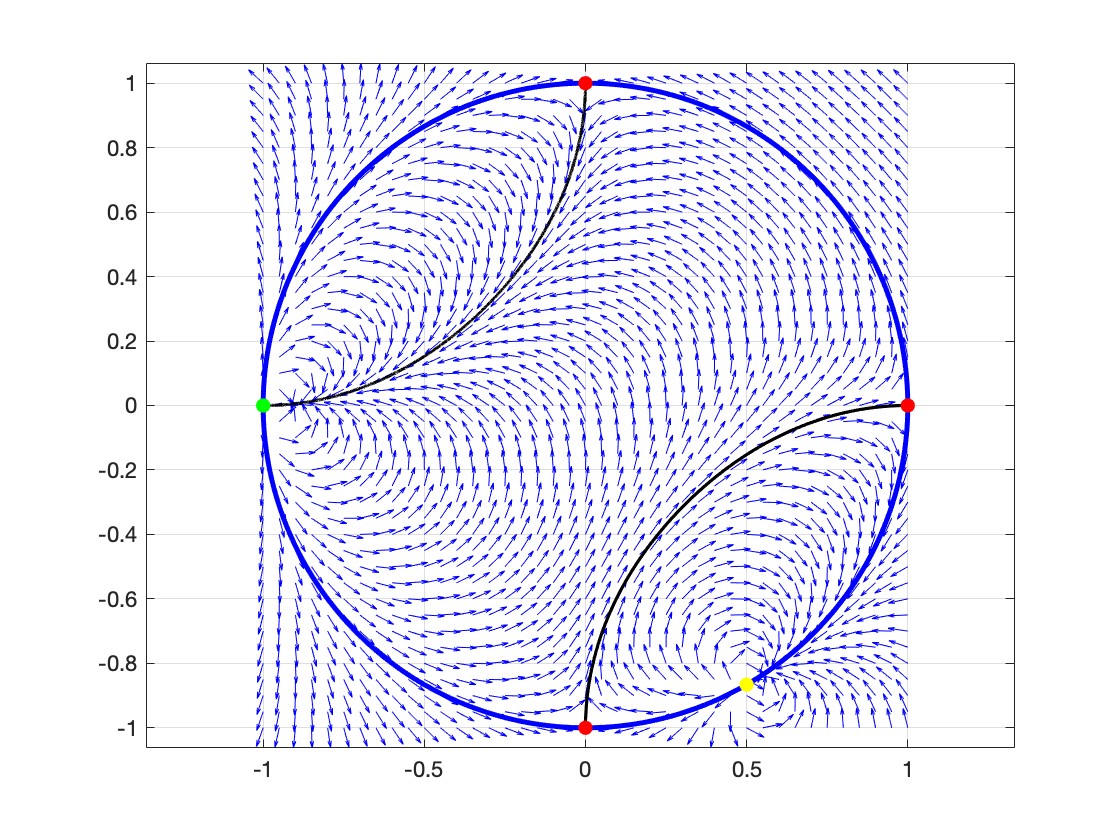}
    \caption{$z_1=i$,$z_2=1$,$z_3=-i$,$\xi_1= e^{-\frac{\pi i}{3}}$}
    \label{31a}
\end{minipage}
\end{figure}

In Figure 4.5, $n=4$,$m=1$,$z_1=i$,$z_2=1$,$z_3=-i$,$\xi_1= e^{\frac{\pi i}{3}}$. The SLE(0) curves connect $z_1$ to $z_2$ and $z_3$ to $u=-1$.

$$R'(z) = \frac{(1+e^{\pi i /3})^2(z-i)(z+i)(z-1)}{(z+1)^3(z-e^{\pi i /3})^2}$$

In Figure 4.6, $n=4$,$m=1$,$z_1=i$,$z_2=1$,$z_3=-i$,$\xi_1= e^{-\frac{\pi i}{3}}$, the SLE(0) curves connect $z_2$ to $z_3$ and connect $z_1$ to $u=-1$.
$$R'(z)=\frac{(1+e^{-\pi i /3})^2(z-i)(z+i)(z-1)}{(z+1)^3(z-e^{-\pi i /3})^2}$$

\begin{figure}[h]
\begin{minipage}[t]{0.43\linewidth}
    \centering
    \includegraphics[width=6cm]{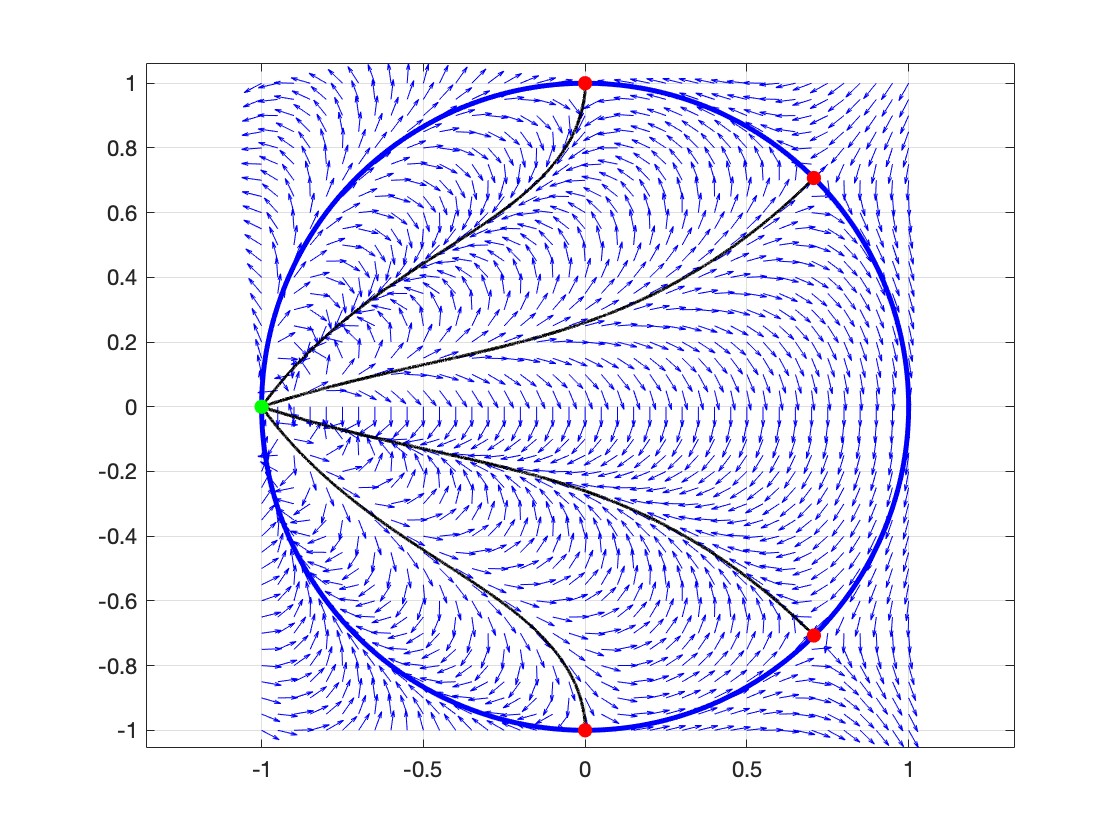}
    \caption{$z_1=i$,$z_2=e^{\frac{\pi i}{4}}$,$z_3=e^{\frac{-\pi i}{4}}$,$z_4=-i$}
    
\end{minipage}
\begin{minipage}[t]{0.43\linewidth}
    \centering
	\includegraphics[width=6cm]{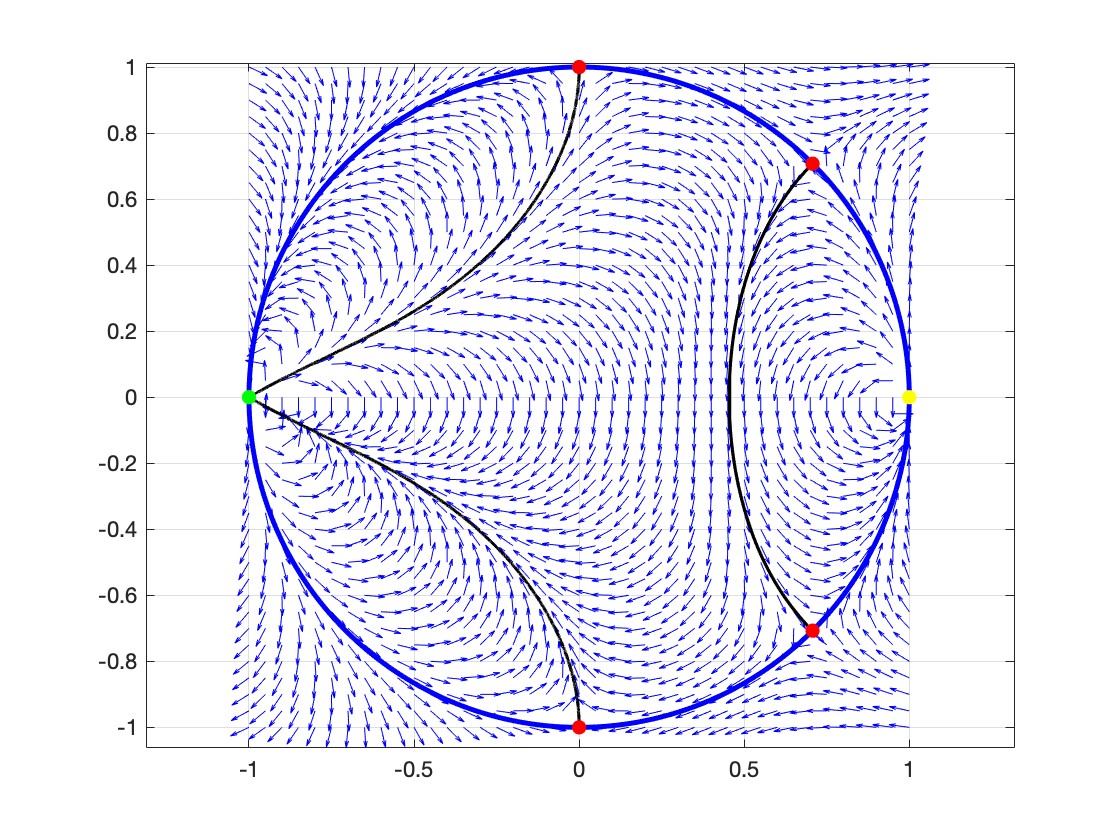}
    \caption{$x_k=e^{\frac{(2k+1)\pi i}{4}}, k=1,2,3,4, \xi_1=1$}
    
\end{minipage}

\end{figure}
\newpage 
In Figure 4.7, $n=4$,$m=0$,$z_1=i$,$z_2=e^{\frac{\pi i}{4}}$,$z_3=e^{\frac{-\pi i}{4}}$,$z_4=-i$. The SLE(0) curves connect $z_1$, $z_2$, $z_3$ and $z_4$ to $u=-1$.

$$R'(z) = \frac{i(z-e^{\pi i/4})(z-e^{-\pi i/4})(z-i)(z+i)}{(z+1)^6} $$

In Figure 4.8, $n=4$,$m=2$,$z_1=i$,$z_2=e^{\frac{\pi i}{4}}$,$z_3=e^{\frac{-\pi i}{4}}$,$z_4=-i$, $\xi_1=1$, the SLE(0) curves connects $z_2$ to $z_3$, connects $z_1$ and $z_4$ to $u=-1$.
$$R'(z) =\frac{i(z-e^{\pi i/4})(z-e^{-\pi i/4})(z-i)(z+i)}{(z+1)^4(z-1)^2} $$

\begin{figure}
   \begin{minipage}[t]{0.43\linewidth}
    \centering
	\includegraphics[width=6cm]{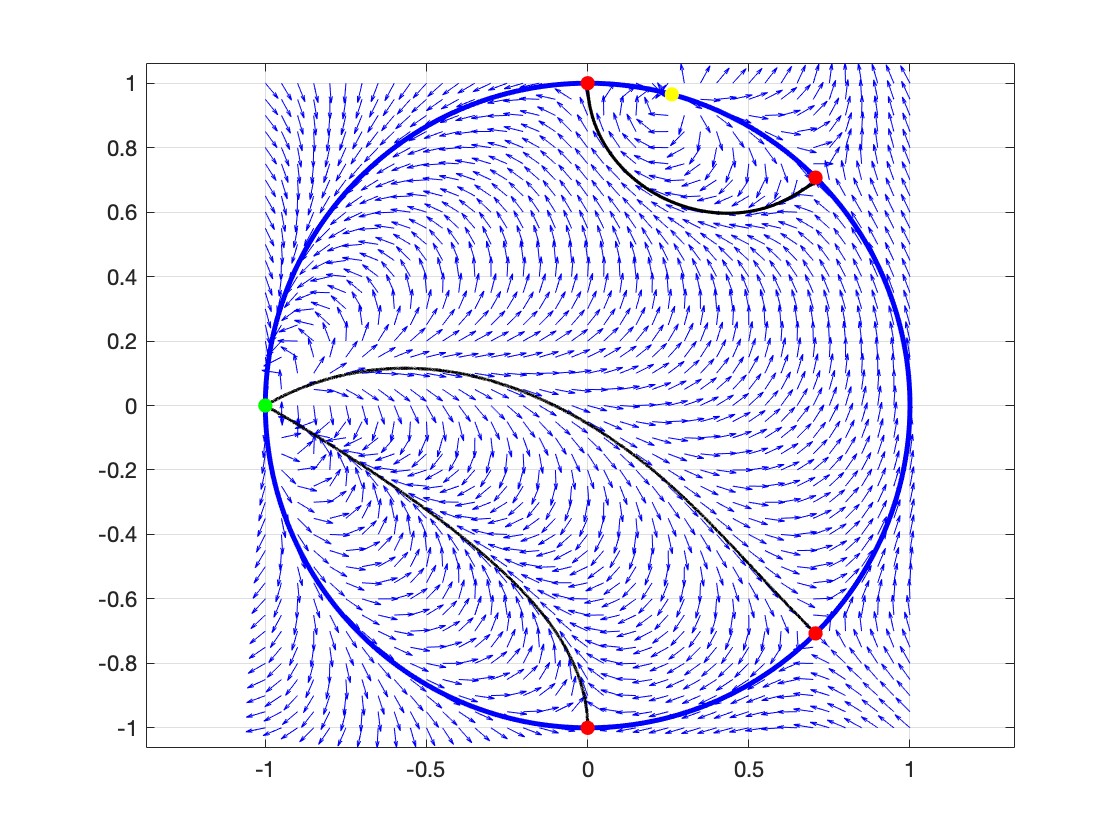}
    \caption{$z_1=i$,$z_2=e^{\frac{\pi i}{4}}$,$z_3=e^{\frac{-\pi i}{4}}$,$z_4=-i$,$\xi_1=\frac{1+2\sqrt{\sqrt{2}+2} i}{2\sqrt{2}+1}$}
    
\end{minipage}
   \begin{minipage}[t]{0.43\linewidth}
    \centering
	\includegraphics[width=6cm]{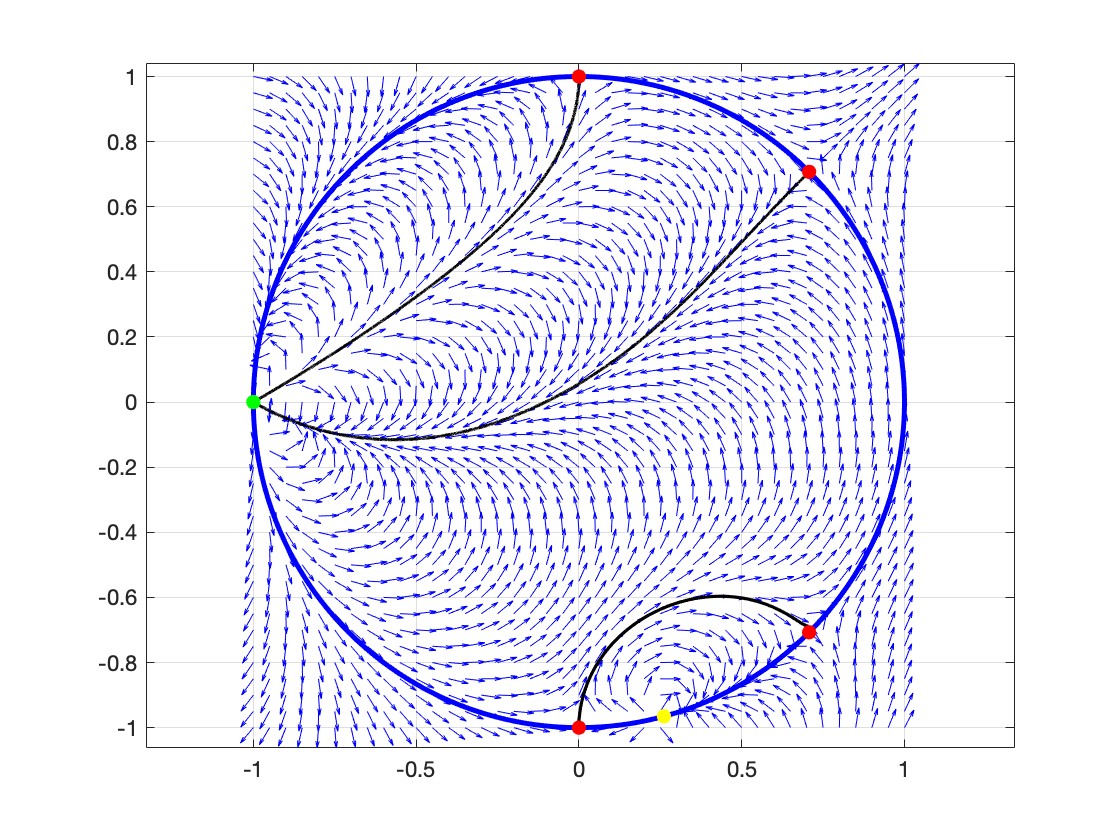}
    \caption{$z_1=i$,$z_2=e^{\frac{\pi i}{4}}$,$z_3=e^{\frac{-\pi i}{4}}$,$z_4=-i$, $\xi_1=\frac{1-2\sqrt{\sqrt{2}+2} i}{2\sqrt{2}+1}$}
    
\end{minipage}
\end{figure}

In Figure 4.9, $n=4$,$m=2$,$z_1=i$,$z_2=e^{\frac{\pi i}{4}}$,$z_3=e^{\frac{-\pi i}{4}}$,$z_4=-i$,$\xi_1=\frac{1+2\sqrt{\sqrt{2}+2} i}{2\sqrt{2}+1}$. The SLE(0) curves connect $z_1$ to $z_2$, connect $z_3$ and $z_4$ to $u=-1$.

$$R'(z) = (\frac{(-2\sqrt{\sqrt{2}+2}+i )}{2\sqrt{2}+1}+1)^2 \frac{(z-e^{\pi i/4})(z-e^{-\pi i/4})(z-i)(z+i)}{(z+1)^4(z-\frac{1+2\sqrt{\sqrt{2}+2} i}{2\sqrt{2}+1})^2} $$

In Figure 4.10, $n=4$,$m=2$,$z_1=i$,$z_2=e^{\frac{\pi i}{4}}$,$z_3=e^{\frac{-\pi i}{4}}$,$z_4=-i$, $\xi_1=\frac{1-2\sqrt{\sqrt{2}+2} i}{2\sqrt{2}+1}$. The SLE(0) curves connect $z_3$ to $z_4$, connect $z_1$ and $z_2$ to $u=-1$.
$$R'(z) =  (\frac{(2\sqrt{\sqrt{2}+2}+i )}{2\sqrt{2}+1}+1)^2 \frac{(z-e^{\pi i/4})(z-e^{-\pi i/4})(z-i)(z+i)}{(z+1)^4(z-\frac{1-2\sqrt{\sqrt{2}+2} i}{2\sqrt{2}+1})^2}$$

\begin{figure}[h]
\begin{minipage}[h]{0.43\linewidth}
    \centering
    \includegraphics[width=6cm]{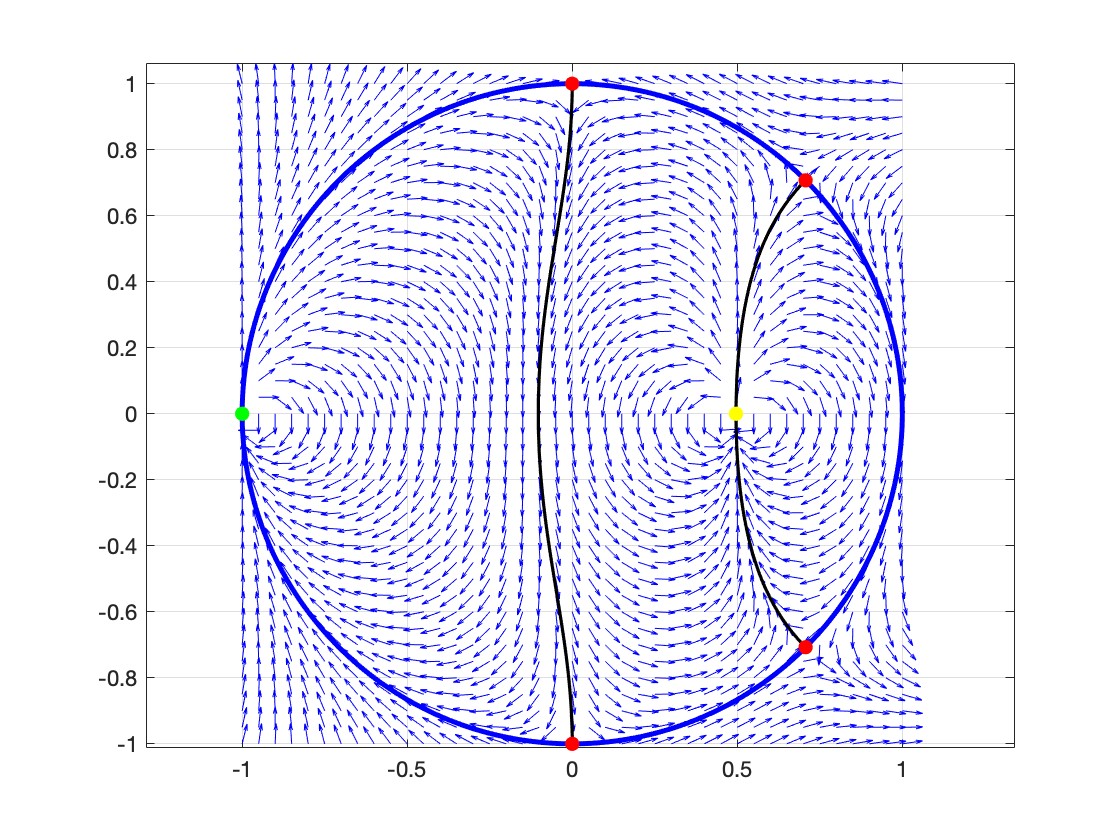}
    \caption{$z_1=i$,$z_2=e^{\frac{\pi i}{4}}$,$z_3=e^{\frac{-\pi i}{4}}$,$z_4=-i$, $\xi_1=0.49604$, $\xi_2=2.0160$}
    
\end{minipage}
\begin{minipage}[h]{0.43\linewidth}
    \centering
	\includegraphics[width=6cm]{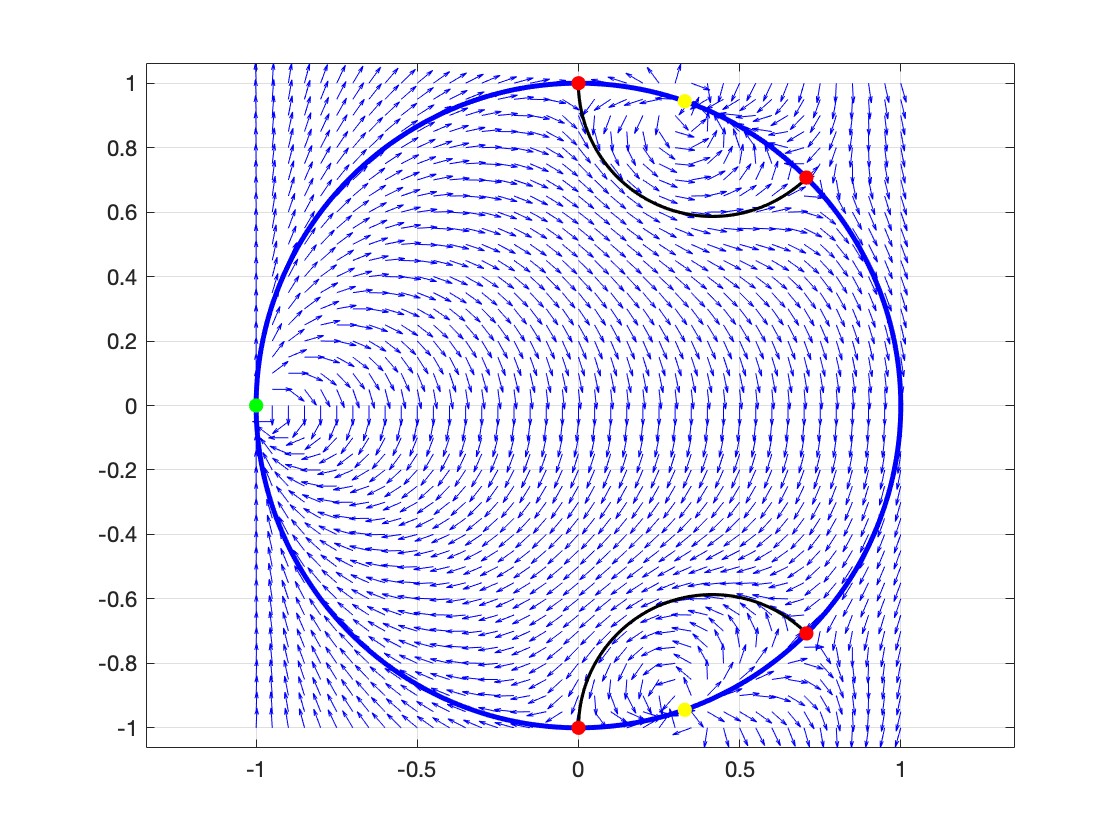}
    \caption{$z_1=i$,$z_2=e^{\frac{\pi i}{4}}$,$z_3=e^{\frac{-\pi i}{4}}$,$z_4=-i$, $\xi_1=0.32979+0.94405i$,$\xi_2=0.32979-0.94405i$}
    
\end{minipage}
\end{figure}

In Figure 4.11, $n=4$,$m=2$,$z_1=i$,$z_2=e^{\frac{\pi i}{4}}$,$z_3=e^{\frac{-\pi i}{4}}$,$z_4=-i$, $\xi_1=0.49604$, $\xi_2=2.0160$. The SLE(0) curve connects $z_1$ and $z_4$, $z_2$ and $z_3$.

$$R'(z) = 
 \frac{i(z-e^{\pi i/4})(z-e^{-\pi i/4})(z-i)(z+i)}{(z+1)^2(z-0.49604)^2(z-2.0160)^2} $$

In Figure 4.12, $n=4$,$m=2$,$z_1=i$,$z_2=e^{\frac{\pi i}{4}}$,$z_3=e^{\frac{-\pi i}{4}}$,$z_4=-i$, $\xi_1=0.32979+0.94405i$, $\xi_2=0.32979-0.94405i$ the SLE(0) curve connects $z_1$ and $z_2$ and connects $z_3$ to $z_4$.
$$R'(z) =\frac{i(z-e^{\pi i/4})(z-e^{-\pi i/4})(z-i)(z+i)}{(z+1)^2 (z-0.32979+0.94405i)^2(z- 0.32979 - 0.94405i)^2)}
$$
\newpage

\subsection{Examples: overscreening}
\label{overscreening}
\indent

\begin{figure}[h]
\begin{minipage}[h]{0.43\linewidth}
    \centering
    \includegraphics[width=6cm]{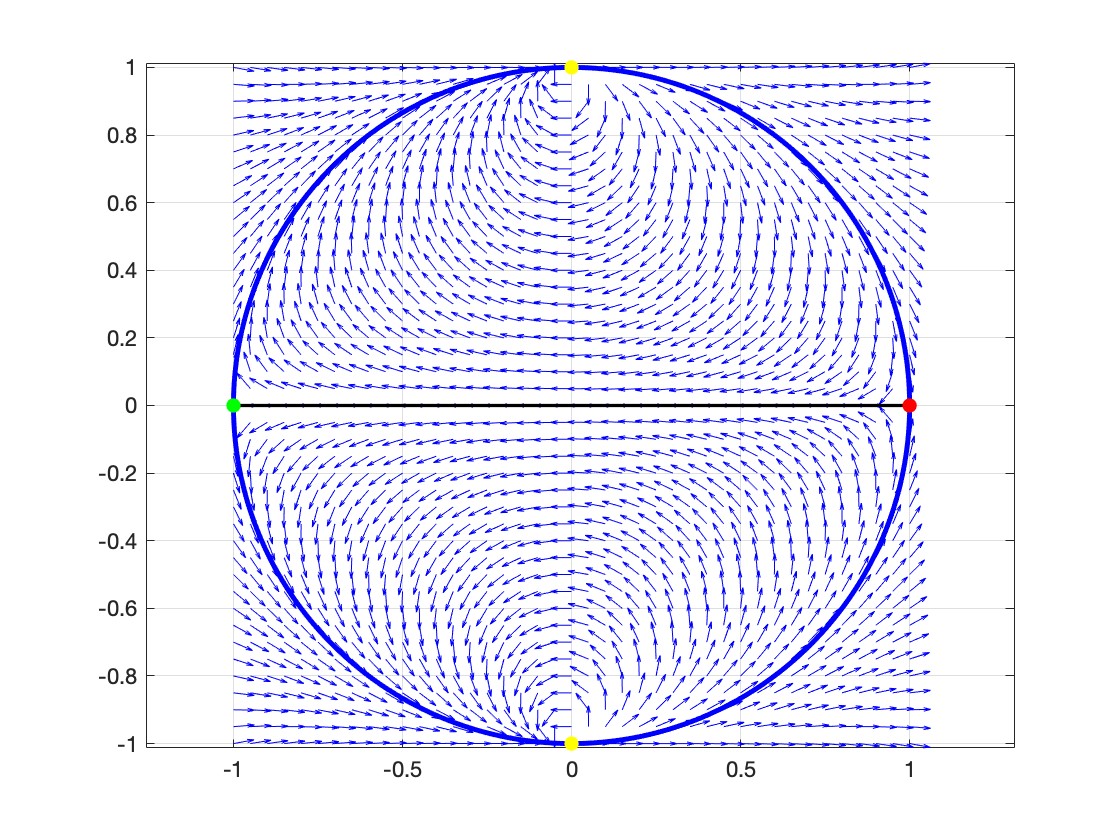}
    \caption{$z_1=1$, $\xi_1=i$, $\xi_2=-i$}
   
\end{minipage}
\begin{minipage}[h]{0.43\linewidth}
    \centering
	\includegraphics[width=6cm]{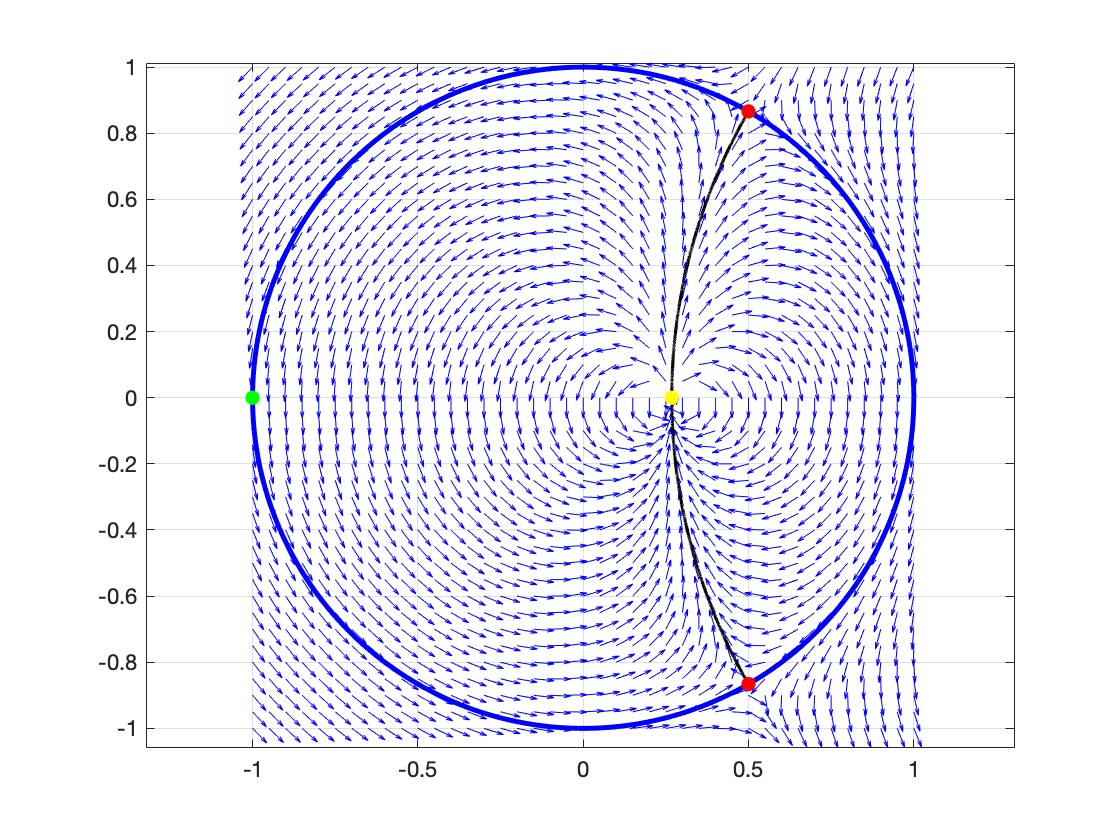}
    \caption{$z_1=e^{\frac{\pi i}{3}}$,$z_2=e^{\frac{-\pi i}{3}}$,$\xi_1=2-\sqrt{3}$,$\xi_2=2+\sqrt{3}$}
   
\end{minipage}

\begin{minipage}[h]{0.43\linewidth}
    \centering
	\includegraphics[width=6cm]{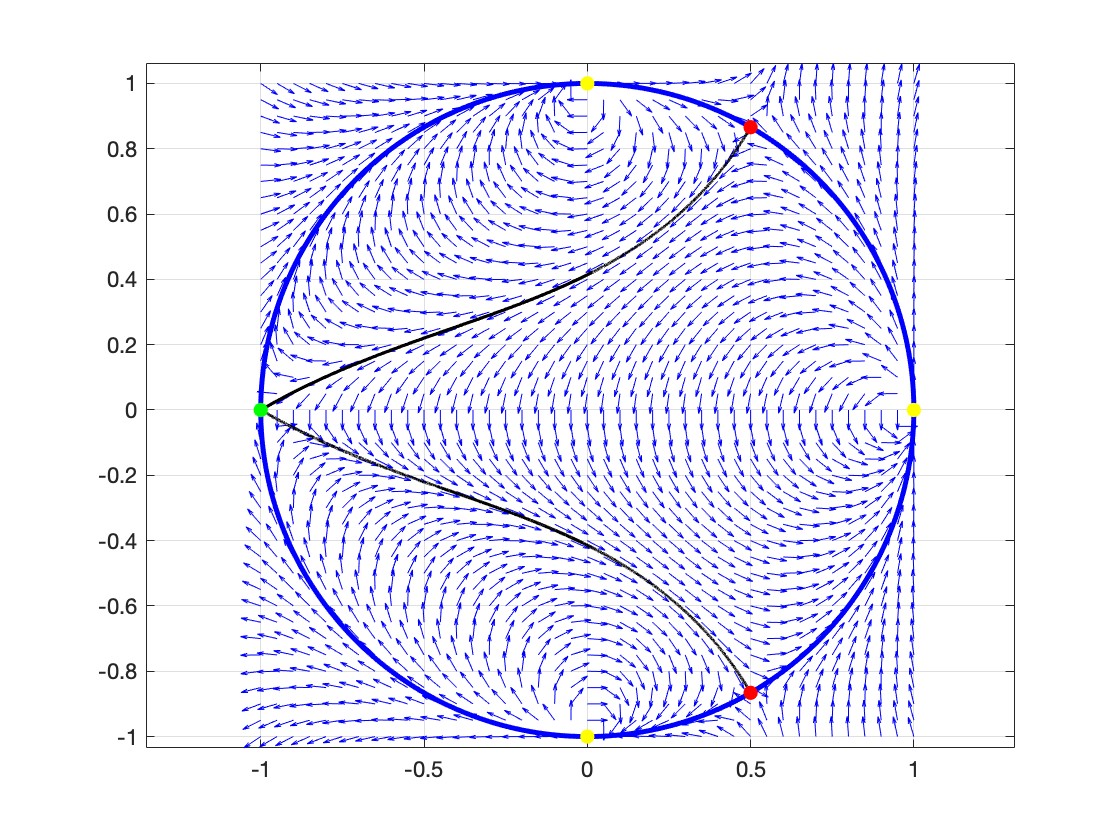}
    \caption{$z_1=e^{\frac{\pi i}{3}}$,$z_2=e^{\frac{-\pi i}{3}}$,$\xi_1=1$,$\xi_2=i$,$\xi_2=-i$}
   
\end{minipage}
\end{figure}

In Figure 4.13, $n=1$,$m=2$,$z_1=1$, $\xi_1=i$, $\xi_2=-i$. The SLE(0) curve connects $z_1$ and $u=-1$.

$$R'(z) = \frac{(z-1)(z+1)}{(z-i)^2(z+i)^2} $$

In Figure 4.14, $n=2$,$m=2$,$z_1=e^{\frac{\pi i}{3}}$,$z_2=e^{\frac{-\pi i}{3}}$,$\xi_1=2-\sqrt{3}$,$\xi_2=2+\sqrt{3}$. The SLE(0) curve connects $z_1$ and $z_2$.
$$R'(z) =\frac{i(z-e^{\pi i/3})(z-e^{-\pi i/3})}{(z+1)^2 (z-2+\sqrt{3})^2(z- 2 - \sqrt{3})^2)}
$$

In Figure 4.15, $n=2$,$m=3$,$z_1=e^{\frac{\pi i}{3}}$,$z_2=e^{\frac{-\pi i}{3}}$,$\xi_1=1$,$\xi_2=i$,$\xi_2=-i$. The SLE(0) curve connects $z_1$ and $z_4$, $z_2$ and $z_3$.

$$R'(z) = 
 \frac{i(z-e^{\pi i/4})(z-e^{-\pi i/4})(z+1)^2}{(z-1)^2(z+i)^2(z-i)^2} $$

\section{Relations to Calogero-Moser system}
\subsection{Multiple  SLE(0) and classical Calogero-Moser system}
\indent

In this section, we investigate the relations between the multiple chordal SLE(0) and classical Calogero-Moser system. 

\begin{defn}
The time evolution of a $n$-particle system on the circle is given by the Hamilton's equations:
$$\dot{x}_j= \frac{\partial{H}}{\partial{p_j}}, \dot{p}_j=-\frac{\partial{H}}{\partial{x_j}}, j=1,\ldots,n, $$
The Hamiltonian is of the form:
$H(\boldsymbol{x},\boldsymbol{p}) = \sum \frac{p_{j}^{2}}{2}+ U(\boldsymbol{x})$ where $U(\boldsymbol{x})$ is a smooth real-valued function on $R^n$.  The initial state of the system is encoded in a position vector $\boldsymbol{x}=(x_1,...,x_n) \in (\mathbb{R}/2\pi \mathbb{Z})^n$ and a momentum vector $p=(p_1,...,p_n)\in \mathbb{R}^n$ where $p_j = \dot{x}_j$, $j=1,2\ldots,n$. \\
We consider the special case where $U(\boldsymbol{x})$ is a sum of pair potentials
$$U(\boldsymbol{x}) = \sum_{j<k} V_{jk}(x_j-x_k).$$
For the Calogero-Moser system, the pair potential is given by $$V_{jk}= -\frac{8}{(x_j-x_k)^2}.$$

\end{defn}

The following theorem describes the evolution of the growth points $\boldsymbol{x}$ and $\boldsymbol{\xi}$ for the multiple chordal SLE(0) systems constructed using stationary relations.
\begin{thm}[See proposition 6.1, Corollary 6.2 in \cite{ABKM20}] \label{Evolution of growth points and screening charges}
 Let $\boldsymbol{x}=\left\{x_1, \ldots, x_{n}\right\}$ be distinct points on the real line and $\boldsymbol{\xi}=\left\{\xi_1, \ldots, \xi_{m}\right\}$ closed under conjugation and solve the stationary relation. Let $\boldsymbol{x}(t)$ and $\boldsymbol{\xi}(t)$ evolve according to multiple chordal SLE(0) system with common parametrization of capacity ($\nu_j(t)=1$). 
 
\begin{itemize}
\item[(i)]  The pair $(\boldsymbol{x}(t), \boldsymbol{\xi}(t))$ forms the closed dynamical system satisfying
\begin{equation}\label{x derivative}
\dot{x}_j=2 \left(\sum_{k \neq j} \frac{2}{x_j-x_k}-\sum_{k=1}^{m} \frac{2}{x_j-\xi_k} \right),
\end{equation}
and

\begin{equation} \label{xi derivative}
\dot{\xi}_k=2\left(-\sum_{l \neq k}\frac{2}{\xi_k-\xi_l}+\sum_{j=1}^{n}\frac{2}{\xi_k-x_j}\right)
\end{equation}

\item[(ii)]$\boldsymbol{x}(t)$ evolve according to the classical Calegero-Moser Hamiltonian, in other words:
$$
\ddot{x}_j=-\sum_{k \neq j} \frac{8}{(x_j-x_k)^3} .
$$
\item[(iii)]$\xi_k$ follows the second-order dynamics.
\begin{equation}
\ddot{\xi}_k=-\sum_{l \neq k}  \frac{8}{(\xi_k-\xi_l)^3} .
\end{equation}
\item[(iv)]
The energy of the system is given by $$\mathcal{H}(\boldsymbol{x},\boldsymbol{p})=0$$

\end{itemize} 
\end{thm}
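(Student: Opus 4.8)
The plan is to establish the first-order system in (i) directly from the defining data, and then to read off the second-order dynamics (ii)--(iii) and the energy identity (iv) from it. For (i), I would start from the Loewner driving function of Definition~\ref{multiple chordal SLE(0) Loewner chain via stationary relations} with the uniform capacity choice $\nu_j(t)\equiv 1$ and substitute the explicit logarithmic derivative $U_j=\partial_{x_j}\log\mathcal{Z}=\sum_{k\neq j}\frac{2}{x_j-x_k}-\sum_{l}\frac{4}{x_j-\xi_l}$ supplied by Theorem~\ref{thm stationary relations imply commutation}. Adding the interaction term $\sum_{k\neq j}\frac{2}{x_j-x_k}$ coming from the growth of the other curves doubles the first sum and yields exactly~(\ref{x derivative}). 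For the screening charges I would use that the poles appearing in the integral of motion of Theorem~\ref{integral of motion in H} are transported by the flow, so $\dot\xi_k=\sum_{j}\frac{2}{\xi_k-x_j}$; Theorem~\ref{traces as real locus} guarantees that the stationary relations persist in time, and substituting the stationary relation $\sum_{j}\frac{1}{\xi_k-x_j}=\sum_{l\neq k}\frac{2}{\xi_k-\xi_l}$ back into this transport law rewrites it in the symmetric form~(\ref{xi derivative}).

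For (ii) and (iii) the plan is to differentiate the closed first-order system of (i) once more in $t$ and substitute the velocities $\dot x_k$ and $\dot\xi_l$ back in. Each second derivative then becomes a rational expression in the $x$'s and $\xi$'s, and the stationary relations are used as algebraic constraints to eliminate the mixed dependence, collapsing $\ddot x_j$ into a pure pairwise inverse-cube force among the growth points and, dually, $\ddot\xi_k$ into the inverse-cube force among the screening charges. I expect this cancellation to be the \emph{main obstacle}: the screening-charge contributions to $\ddot x_j$ do not vanish term by term (a short $n=2$, $m=1$ check shows the $\xi$-term combines nontrivially with the $x$-terms rather than dropping out), so one must deploy partial-fraction identities together with the stationary relations to show that the residual $\xi$-dependence telescopes away. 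A cleaner high-level route for (ii) is to invoke the Hamiltonian identification of Theorem~\ref{CM results kappa=0}: the uniform-capacity flow is generated by $\mathcal{H}=\sum_j\mathcal{H}_j$, which coincides with the classical Calogero--Moser Hamiltonian, so that Hamilton's equation $\ddot x_j=\dot p_j=-\partial_{x_j}\mathcal{H}$ is immediately the Calogero--Moser force; part (iii) then follows from the symmetry that exchanges growth points and screening charges in the master function $\Phi$.

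For (iv) I would observe that each null-vector Hamiltonian $\mathcal{H}_j$ vanishes identically. Rewriting $\mathcal{H}_j$ in terms of $u_j:=p_j-\sum_{k\neq j}\frac{2}{x_j-x_k}=U_j$ reduces it to the quadratic null-vector equation of Theorem~\ref{commutation for kappa=0}, which holds for the $U_j$ built from the stationary relations by Theorem~\ref{thm stationary relations imply commutation}. Summing $\mathcal{H}_j=0$ over $j$ and using $\mathcal{H}=\sum_j\mathcal{H}_j$ from Theorem~\ref{CM results kappa=0} gives $\mathcal{H}(\boldsymbol{x},\boldsymbol{p})=0$; alternatively one can verify the vanishing of the energy directly by expanding $\sum_j\tfrac{1}{2}p_j^2$ and reducing it, via the two conformal Ward identities $\sum_j U_j=0$ and $\sum_j x_jU_j=(n-2m)^2-n-4m$ of Theorem~\ref{ward identities kappa 0}, against the pairwise potential term.
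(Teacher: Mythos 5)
Your main plan is essentially the paper's proof. Part (i) is done exactly as you describe: with $\nu_j\equiv 1$ the drift $U_j+\sum_{k\neq j}\frac{2}{x_j-x_k}$ doubles the growth-point sum, and the poles are transported by the flow, $\dot\xi_k=\sum_j\frac{2}{\xi_k-x_j}$, which the stationary relation (persisting in time, as you correctly note via Theorem~\ref{traces as real locus}) rewrites in the symmetric form. Part (iv) is also the paper's argument: each null-vector Hamiltonian $\mathcal{H}_j$ vanishes because $U_j$ solves the null vector equations of Theorem~\ref{thm stationary relations imply commutation}, and summing gives $\mathcal{H}=0$. For (ii), the paper carries out precisely the computation you flag as the main obstacle: differentiate~\eqref{x derivative}, substitute the first-order system, apply the stationary relation \emph{twice} to convert the $\xi$-dependent terms into triple sums of the form $\sum\frac{8}{(x_j-x_k)(x_j-x_l)(x_k-x_l)}$ and $\sum\frac{8}{(x_j-\xi_l)(x_j-\xi_m)(\xi_l-\xi_m)}$, each of which cancels by antisymmetry. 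So your instinct that the $\xi$-contributions do not drop term by term, but only after partial-fraction regrouping, is exactly right.

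Two cautions on your alternatives. First, the ``cleaner high-level route'' for (ii) is circular: Theorem~\ref{CM results kappa=0} identifies $\mathcal{H}=\sum_j\mathcal{H}_j$ with the Calogero--Moser Hamiltonian and shows the $\mathcal{H}_j$ are conserved, but it does \emph{not} establish that the SLE(0) evolution of $(\boldsymbol{x},\boldsymbol{p})$ is the Hamiltonian flow of $\mathcal{H}$; the identity $\dot p_j=-\partial_{x_j}\mathcal{H}$ is exactly assertion (ii), so it cannot be invoked to prove it. (Indeed, taken literally with the normalization $\mathcal{H}=\sum_j\tfrac{1}{2}p_j^2-\sum_{j<k}\frac{8}{(x_j-x_k)^2}$, Hamilton's equations would produce $-\sum_{k\neq j}\frac{16}{(x_j-x_k)^3}$, not the stated force, which signals that the shortcut bypasses the very identification that the direct computation supplies.) Second, for (iii) no duality of the master function is needed, and such a symmetry would anyway require justification since the exponents of $\Phi$ are not symmetric in $\boldsymbol{x}$ and $\boldsymbol{\xi}$: the paper instead observes that after the stationary relation the pole dynamics is \emph{autonomous}, $\dot\xi_k=\sum_{l\neq k}\frac{4}{\xi_k-\xi_l}$, so a single further differentiation involves only $\xi$'s and the residual triple sum cancels by symmetry, with no stationary-relation input at second order. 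Your primary route, carried through as the paper does, is the proof; keep the Hamiltonian shortcut and the duality remark only as heuristics.
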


The multiple chordal SLE(0) systems we constructed using stationary relations are not known to encompass all possible multiple chordal SLE(0) systems.

\begin{proof}[Proof of theorem (\ref{Evolution of growth points and screening charges})]
\

\begin{itemize}
\item[(i)]
 The evolution of $x_j(t)$ is
$$
\begin{aligned}
\dot{x}_j=&\left(\sum_{k \neq j} \frac{2}{x_j-x_k}-2\sum_{k=1}^{m} \frac{2}{x_j-\xi_k}+\sum_{k \neq j} \frac{2}{x_j-x_k}\right) \\
=&2\sum_{k \neq j} \frac{2}{x_j-x_k}-2\sum_{k=1}^{m} \frac{2}{x_j-\xi_k},
\end{aligned}
$$
On the other hand, since the poles follow the Loewner flow we have $\xi_k(t):=$ $g_t\left(\xi_k(0)\right)$, and therefore
$$
\dot{\xi}_k=\dot{g}_t\left(\xi_k(0)\right)= \sum_{j=1}^{n} \frac{2}{g_t\left(\xi_k(0)\right)-x_j}=\sum_{j=1}^{n} \frac{2}{\xi_k-x_j} .
$$
The stationary relation implies that
$$
\dot{\xi}_k=2\sum_{l \neq k} \frac{2}{\xi_k-\xi_l}=-2\sum_{l \neq k}\frac{2}{\xi_k-\xi_l}+ 2\sum_{j=1}^{n}\frac{2}{\xi_k-x_j},
$$
\item[(ii)]
By differentiating, we have
$$
\ddot{x}_j=-\sum_{k \neq j} \frac{2}{\dot{x}_j-\dot{x}_k}\frac{4}{(x_j-x_k)^2}+\sum_l \frac{2}{\dot{x}_j-\dot{\xi}_l}\frac{4}{(x_j-\xi_l)^2} .
$$
Using the formula (\ref{x derivative}) for $\dot{x}_j, \dot{x}_k$ and the equality (\ref{xi derivative})  for $\dot{\xi}_l$ we obtain
$$
\begin{aligned}
\ddot{x}_j= & -\sum_{k \neq j} \frac{1}{(x_j-x_k)^2}  \left( 2\cdot\frac{2}{x_j-x_k}\right)+\\
&-\sum_{k \neq j}\frac{1}{(x_j-x_k)^2} \left(\sum_{l \neq j, k}\left(\frac{2}{x_j-x_l}-\frac{2}{x_k-x_l}\right)  +\sum_l\left(\frac{2}{\xi_l-x_j}-\frac{2}{\xi_l-x_k}\right) \right)\\ & +\sum_l \frac{2}{(x_j-\xi_l)^2}\left(\sum_{k \neq j} \frac{2}{x_j-x_k}+\sum_{s=1}^m \frac{2}{\xi_s-x_j}+\sum_{s \neq l} \frac{2}{\xi_l-\xi_s}-\sum_{k=1}^{n} \frac{2}{\xi_l-x_k}\right) .
\end{aligned}
$$
Rearranging terms gives
$$
\begin{aligned}
\ddot{x}_j & +\sum_{k \neq j} \frac{8}{(x_j-x_k)^3}- \frac{1}{2}\sum_{k \neq j} \sum_{l \neq j, k} \frac{8}{(x_j-x_k)(x_j-x_l)(x_k-x_l)} \\
& =-\frac{1}{2}\sum_{k \neq j} \sum_l \frac{8}{\left(x_j-x_k\right)\left(x_j-\xi_l\right)\left(x_k-\xi_l\right)}+\\
& +\frac{1}{2}\sum_l \frac{4}{\left(x_j-\xi_l\right)^2}\left(\sum_{k \neq j} \frac{2}{x_j-x_k}+\sum_m 
\frac{2}{\xi_m-x_j}-\sum_{m \neq l} \frac{2}{\xi_l-\xi_m}\right) .
\end{aligned}
$$
The last term on the right hand side used the stationary relation and then use the stationary relation again to obtain
$$
\begin{aligned}
&\frac{1}{2}\sum_l \frac{4}{\left(x_j-\xi_l\right)^2}\left(\sum_{k \neq j} \frac{2}{x_j-x_k}+\sum_m 
\frac{2}{\xi_m-x_j}-\sum_{m \neq l} \frac{2}{\xi_l-\xi_m}\right)  \\
& =\frac{1}{2}\sum_l \frac{4}{\left(x_j-\xi_l\right)^2} \sum_{m \neq l}\left(\frac{2}{\xi_l-\xi_m}+\frac{2}{\xi_m-x_j}\right)\\
&=\frac{1}{2}\sum_l \sum_{m \neq l} \frac{8}{\left(x_j-\xi_l\right)\left(x_j-\xi_m\right)\left(\xi_l-\xi_m\right)} .
\end{aligned}
$$
Combining all of the above, we obtain
$$
\begin{aligned}
\ddot{x}_j+\sum_{k \neq j} \frac{8}{(x_j-x_k)^3} & =\frac{1}{2}\sum_{k \neq j} \sum_{l \neq j, k} \frac{8}{\left(x_j-x_k\right)\left(x_j-x_l\right)\left(x_k-x_l\right)} \\
& +\frac{1}{2}\sum_l \sum_{m \neq l} \frac{8}{\left(x_j-\xi_l\right)\left(x_j-\xi_m\right)\left(\xi_l-\xi_m\right)}
\end{aligned}
$$
The right-hand side is canceled by symmetry.

\item[(iii)]

Differentiating the equality (\ref{xi derivative}), we have
$$
\ddot{\xi}_k=-\sum_{l \neq k} \frac{4(\dot{\xi}_k-\dot{\xi}_l)}{ (\xi_k-\xi_l)^2} .
$$
Now by using the first equality of (6.4) again for $\dot{\xi}_k, \dot{\xi}_l$ we obtain
$$
\ddot{\xi}_k=-\frac{1}{2}\sum_{l \neq k} \frac{4}{(\xi_k-\xi_l)^2}\left(\frac{2}{\xi_k-\xi_l}+\sum_{m \neq k, l} \frac{2}{\xi_k-\xi_m}-\frac{2}{\xi_l-\xi_k}-\sum_{m \neq k, l} \frac{2}{\xi_l-\xi_m}\right)
$$
Rearranging terms gives
$$
\ddot{\xi}_k=-\sum_{l \neq k}  \frac{8}{(\xi_k-\xi_l)^3}+ \frac{1}{2}\sum_{l \neq k} \sum_{m \neq k, l} \frac{8}{(\xi_k-\xi_l)(\xi_k-\xi_m)(\xi_l-\xi_m)} .
$$
The last term is canceled by symmetry.
\item[(iv)]
  For a multiple chordal SLE(0) system with $n$ growth points and $m$ screening charges that solve the stationary relations, by theorem (\ref{thm stationary relations imply commutation}), 
  $$U_j=\sum_{k \neq j} \frac{2}{x_j-x_k}-2\sum_{k=1}^{m} \frac{2}{x_j-\xi_k}$$
  satisfies the null vector equation (\ref{null vector equation for kappa 0}).
   Plugging into equation (\ref{null Hamiltonian}) and equation (\ref{CS and Null}), we obtain the desired result.
\end{itemize}
\end{proof}

\begin{proof}[Proof of theorem (\ref{CM results kappa=0})]
For multiple chordal SLE(0) system with common parametrization of capacity (i.e. $\nu_j(t)=1$ for $j=1,2,\ldots,n$), let $\left\{\left(x_j, U_j\right), j=1, \ldots,  n\right\}$ are related to $\left\{\left(x_j, p_j\right), j=1, \ldots,  n\right\}$ via
$$
p_j=\left(U_j+\sum_{k \neq j}\frac{2}{x_j-x_k} \right)
$$
where $U_j$ solves the null vector equations (\ref{null vector equation for kappa 0}).
\begin{itemize}
 
\item[(i)]

Solving for $U_j$ and inserting the result into the left-hand side of the null vector equation leads to the identity.

\begin{equation}\label{null Hamiltonian}
\begin{aligned}
h=&\frac{1}{2} U_j^2+\sum_k f_{k j} U_k-\sum_k \frac{3}{2}f_{ jk}^2 \\
&=\frac{1}{2} p_j^2- \sum_k\left(p_j+p_k\right) f_{j k}+\sum_k \sum_{l \neq k} f_{j k} f_{j l}-2\sum_k f_{j k}^2 \\
&=\mathcal{H}_j(\boldsymbol{x}, \boldsymbol{p})
\end{aligned} 
\end{equation}

where 
$$
f_{j k}=f_{j k}(\boldsymbol{x})= \begin{cases}0, & j=k \\ \frac{2}{x_j-x_k}, & j \neq k\end{cases}
$$

Therefore, $\mathcal{H}_j$ is preserved under the Loewner flow.

Futheremore, for each $c \in \mathbb{R}$, the submanifolds defined by the null vector Hamiltonian
\begin{equation}
N_c=\left\{(\boldsymbol{x}, \boldsymbol{p}): \mathcal{H}_j(\boldsymbol{x}, \boldsymbol{p})=c \text { for all } j\right\}
\end{equation}
are invariant under the Loewner flow.

By direct computation, $\mathcal{H}_j$ is related to the Calogero-Moser Hamiltonian $\mathcal{H}$ by:
\begin{equation} \label{CS and Null}
 \sum_j \mathcal{H}_j=\mathcal{H}   
\end{equation}

Our next result shows that null vector Hamiltonian $\mathcal{H}_j$ has a nice interpretation in terms of the Lax pair for the Calogero-Moser system.

\begin{thm}
The Lax pair is two square matrices $L=L(\boldsymbol{x}, \boldsymbol{p})$ and $M=M(\boldsymbol{x}, \boldsymbol{p})$ each of size $ n \times  n$, and by \cite{Mos75} the entries are given by
$$
L_{j k}=\left\{\begin{array}{ll}
p_j, & j=k, \\
2f_{j k}, & j \neq k,
\end{array} \quad \text { and } \quad M_{j k}= \begin{cases}-\sum_l f_{j l}^2, & j=k \\
f_{j k}^2, & j \neq k\end{cases}\right.
$$
This leads to the following representation of $\mathcal{H}_j$ in terms of $L^2$.
\begin{equation}
\mathcal{H}_j=\frac{1}{2} \mathrm{e}_j^{\prime} L^2 \mathbf{1}    
\end{equation}

where $\mathrm{e}_j^{\prime}$ is the transpose of the $j$ th standard basis vector and 1 is the vector of all ones. 

Consequently, the $U_j, j=1, \ldots, n$, defined by solving the null vector equations for a given $\boldsymbol{x}$ iff the $\boldsymbol{p}$ variables satisfy $L^2(\boldsymbol{x}, \boldsymbol{p}) \mathbf{1}=\mathbf{0}$.
\end{thm}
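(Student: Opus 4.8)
The plan is to verify the algebraic identity $\mathcal{H}_j = \tfrac12 e_j' L^2 \mathbf{1}$ by direct expansion, and then read off the ``consequently'' clause as an immediate corollary. First I would write $L = P + 2F$, where $P = \mathrm{diag}(p_1,\dots,p_n)$ carries the momenta and $F = (f_{jk})$ is the off-diagonal matrix with $f_{jj}=0$; the decisive structural fact is that $F$ is \emph{antisymmetric}, since $f_{kj} = 2/(x_k-x_j) = -f_{jk}$. Expanding $L^2 = P^2 + 2(PF + FP) + 4F^2$ and taking the $j$-th row sum $e_j' L^2\mathbf{1} = \sum_l (L^2)_{jl}$ splits the computation into four groups: $P^2$ contributes $p_j^2$; $PF$ contributes $p_j\sum_{k}f_{jk}$; $FP$ contributes $\sum_k f_{jk}p_k$; and $F^2$ contributes $\sum_{k,l}f_{jk}f_{kl}$. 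This reproduces, up to the overall factor $\tfrac12$, the three visible blocks of $\mathcal{H}_j$ in (\ref{null Hamiltonian}): the kinetic term, the mixed momentum term $\sum_{k}(p_j+p_k)f_{jk}$, and a purely positional term quadratic in the $f$'s.

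The heart of the matter is to match the positional term. In $F^2$ I would isolate the contribution of the index $l=j$, which by antisymmetry equals $\sum_k f_{jk}f_{kj} = -\sum_{k}f_{jk}^2$, accounting for the diagonal $-2\sum_{k\neq j}f_{jk}^2$ piece of $\mathcal{H}_j$; the remaining genuine three-index sum $\sum_{k\neq j}\sum_{l\neq j,k}f_{jk}f_{kl}$ must then be shown to equal $\tfrac12\big(\sum_{k\neq j}f_{jk}\big)^2 - \tfrac12\sum_{k\neq j}f_{jk}^2$, which is precisely (half of) the term $\sum_k\sum_{l\neq k}f_{jk}f_{jl}$ appearing in (\ref{null Hamiltonian}). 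The key tool here is the Calogero three-term identity, valid for distinct indices, $f_{jk}f_{jl} + f_{kl}f_{kj} + f_{lj}f_{lk} = 0$, equivalently the partial-fraction identity $\tfrac{1}{(x_j-x_k)(x_j-x_l)} + \tfrac{1}{(x_k-x_l)(x_k-x_j)} + \tfrac{1}{(x_l-x_j)(x_l-x_k)} = 0$. Applying it together with the antisymmetry relabelling $k\leftrightarrow l$ converts the ``chained'' products $f_{jk}f_{kl}$ produced by $L^2$ into the ``common-vertex'' products $f_{jk}f_{jl}$ that occur in $\mathcal{H}_j$, with exactly the factor of two needed.

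I expect the main obstacle to be the sign bookkeeping generated by the antisymmetry of $F$: the momentum cross-terms fix the orientation of the identity (row sums versus column sums, equivalently $L$ versus $L'$), and one must keep the conventions for $f_{jk}$ consistent throughout the three-index manipulation so that the Calogero cancellation lands on the correct sign. Once the identity $\mathcal{H}_j = \tfrac12 e_j' L^2\mathbf{1}$ is established, the $L^2$-representation of the Hamiltonian $\mathcal{H} = \sum_j \mathcal{H}_j = \tfrac12\mathbf{1}'L^2\mathbf{1}$ follows by summing over $j$, consistent with (\ref{CS and Null}). For the final clause, note that $L^2(\boldsymbol{x},\boldsymbol{p})\mathbf{1} = \mathbf{0}$ is equivalent to $(L^2\mathbf{1})_j = e_j'L^2\mathbf{1} = 0$ for every $j$, i.e.\ to $\mathcal{H}_j = 0$ for all $j$; by the derivation in (\ref{null Hamiltonian}) these are exactly the null vector equations (\ref{null vector equation for kappa 0}) written in the momentum variables $p_j = U_j + \sum_{k\neq j}\tfrac{2}{x_j-x_k}$. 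Hence, for a given $\boldsymbol{x}$, the $U_j$ solve the null vector equations if and only if the associated $\boldsymbol{p}$ satisfies the kernel condition $L^2\mathbf{1}=\mathbf{0}$, as claimed.
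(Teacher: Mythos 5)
Your proposal is correct and follows essentially the same route as the paper: split $L$ into its diagonal momentum part plus the antisymmetric matrix of $f_{jk}$'s, expand $L^2$, match the kinetic and mixed momentum terms directly, and convert the chained three-index sum $\sum f_{jk}f_{kl}$ into the common-vertex sum $\sum f_{jk}f_{jl}$ via the partial-fraction (Calogero) identity, with the kernel condition $L^2\mathbf{1}=\mathbf{0}$ then immediate from $\mathcal{H}_j=\tfrac12 \mathrm{e}_j' L^2\mathbf{1}$ together with equation~\eqref{null Hamiltonian}. You are in fact slightly more careful than the paper on the one delicate point, namely the sign convention for the off-diagonal entries (row versus column sums, i.e.\ $L$ versus $L'$), where the paper's theorem statement ($L_{jk}=2f_{jk}$) and its proof ($L=P-X_1$) quietly use opposite signs.
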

\begin{proof}
Write $L=P-X_1$, where $P=P(\boldsymbol{p})=\operatorname{diag}(\boldsymbol{p})$ is the square matrix with entries of $\boldsymbol{p}$ along its diagonal, and $X_1=X_1(\boldsymbol{x})$ is the square matrix with entries $\left(X_1\right)_{j k}=f_{j k}$. Note that $P$ is symmetric and $X_1$ is anti-symmetric. Then
$$
L^2=P^2-P X_1-X_1 P+X_1^2
$$
It is straightforward to compute the entries of $P^2-P X_1-X_1 P$ and see that they give the first two terms on the right-hand side of the Hamiltonian. For $X_1^2$ we have
$$
\begin{aligned}
\mathrm{e}_j^{\prime} X_1^2 1=\sum_k\left(X_1^2\right)_{j k}&=-4(\sum_k \sum_i f_{j l} f_{k l})\\
& =-4\left(\sum_l f_{j l}^2+\sum_{k\neq j} \sum_{l \neq j} f_{j l} f_{k l}\right) \\
& =-4\left(\sum_l f_{j 1}^2+\frac{1}{2} \sum_{k \neq j } \sum_{l \neq k}\left(f_{j l} f_{k l}+f_{j k} f_{l k}\right)\right) \\
& =-4\left(\sum_l f_{j 1}^2-\frac{1}{2} \sum_{k \neq j } \sum_{l \neq k} f_{j k} f_{j l}\right).
\end{aligned}
$$
\end{proof}
\item[(ii)]\begin{defn}[Poisson Bracket]
For any smooth function \( F = F(\boldsymbol{x}, \boldsymbol{p}) \) defined on phase space, the associated vector field is given by  
\[
X_F = \sum_{j=1}^{n} \frac{\partial F}{\partial p_j} \partial_{x_j} - \sum_{j=1}^{n} \frac{\partial F}{\partial x_j} \partial_{p_j}.
\]  
Given two smooth functions \( F = F(\boldsymbol{x}, \boldsymbol{p}) \) and \( G = G(\boldsymbol{x}, \boldsymbol{p}) \), the commutator of their associated vector fields satisfies  
\[
[X_F, X_G] = X_{\{F, G\}},
\]  
where \( \{F, G\} \), the Poisson bracket of \( F \) and \( G \), is defined by  
\[
\{F, G\} = \sum_{j=1}^{n} \left( \frac{\partial F}{\partial p_j} \frac{\partial G}{\partial x_j} - \frac{\partial F}{\partial x_j} \frac{\partial G}{\partial p_j} \right).
\]  
\end{defn}

By direct computation, for all \( j, k \), the null vector Hamiltonians \( \mathcal{H}_j \) and \( \mathcal{H}_k \) satisfy the Poisson bracket identity  
\[
\{\mathcal{H}_j, \mathcal{H}_k\} = \frac{1}{f_{jk}^2} \left(\mathcal{H}_k - \mathcal{H}_j\right).
\]  

By the definition of \( N_c \), we have \( \{\mathcal{H}_j, \mathcal{H}_k\} = 0 \) along \( N_c \).  

Thus, the vector fields \( X_{\mathcal{H}_j} \) induced by the Hamiltonians \( \mathcal{H}_j \) commute along the submanifolds \( N_c \).
\end{itemize}
\end{proof}

\subsection{Classical Limit and Quantization}
\indent

The relationship between multiple chordal SLE(0) systems and the classical Calogero--Moser system can be understood as the classical limit of the corresponding quantum system. This correspondence manifests clearly through the asymptotic analysis of partition functions as $\kappa \to 0$.

From the viewpoint of partition functions, we expect that for suitably chosen $\mathcal{Z}^\kappa(\boldsymbol{x})$, the following classical limits exist:
\[
\lim_{\kappa \to 0} \frac{\partial \log \mathcal{Z}^\kappa(\boldsymbol{x})}{\partial x_j} = U_j, \qquad
\lim_{\kappa \to 0} \frac{\partial \log \tilde{\mathcal{Z}}^\kappa(\boldsymbol{x})}{\partial x_j} = p_j,
\]
where the pair $(U_j, p_j)$ satisfies the classical Calogero--Moser equations described in~\eqref{CM results kappa=0}.

This correspondence can be verified directly: as $\kappa \to 0$, the logarithmic derivatives of partition functions converge to the classical variables $U_j$ and $p_j$, thereby encoding the classical dynamics.

From the operator-theoretic perspective, the quantum Hamiltonians arise from their classical counterparts via canonical quantization. In this framework, classical position and momentum variables are promoted to differential operators:
\[
x_j \mapsto \hat{x}_j, \qquad
p_j \mapsto \hat{p}_j = \kappa \frac{\partial}{\partial x_j}.
\]
Accordingly, the classical Poisson brackets are replaced by scaled commutators:
\[
\begin{aligned}
\{p_i, x_j\} &= \delta_{ij} \quad \Rightarrow \quad \frac{1}{\kappa}[\hat{p}_i, \hat{x}_j] = \delta_{ij}, \\
\{x_i, x_j\} &= 0 \quad \Rightarrow \quad \frac{1}{\kappa}[\hat{x}_i, \hat{x}_j] = 0, \\
\{p_i, p_j\} &= 0 \quad \Rightarrow \quad \frac{1}{\kappa}[\hat{p}_i, \hat{p}_j] = 0.
\end{aligned}
\]
Under this quantization procedure, the classical Hamiltonians $\mathcal{H}$ and $\mathcal{H}_j$ are mapped to quantum operators $\mathcal{L}$ and $\mathcal{L}_j$, respectively. For further discussion on the quantization of the Calogero--Moser system, see~\cite{E07}.

\section*{Acknowledgement}
I express my sincere gratitude to Professor Nikolai Makarov for his invaluable guidance.

\end{document}